\documentclass[letter,reqno,10pt]{amsart}
\usepackage{amssymb,amsmath,amsfonts}
\usepackage{mathrsfs}
\usepackage[left=1 in, right=1 in,top=1 in, bottom=1 in]{geometry}
\usepackage{mathenv}
\usepackage{framed}
\usepackage{subfigure}
\usepackage{float}
\usepackage{graphicx}


%
%

%
%
%
%
%
%




\newtheorem{theorem}{Theorem}[section]
\newtheorem{lemma}[theorem]{Lemma}

\newtheorem{remark}[theorem]{Remark}

\makeatletter
\renewcommand \theequation {%
\ifnum \c@section>\z@ \@arabic\c@section.%
\fi\@arabic\c@equation} \@addtoreset{equation}{section}
\makeatother

\providecommand{\ud}[1]{\mathrm{d}{#1}}
\providecommand{\abs}[1]{\left\vert#1\right\vert}
\providecommand{\nm}[1]{\left\Vert#1\right\Vert}

\providecommand{\tm}[2]{\left\Vert#1\right\Vert_{H^{#2}(\Omega)}}
\providecommand{\tms}[2]{\left\Vert#1\right\Vert_{H^{#2}(\Sigma)}}

\def\dt{\partial_t}
\def\p{\partial}
\def\ls{\lesssim}

\def\half{\frac{1}{2}}
\def\rt{\rightarrow}
\def\r{\mathbb{R}}
\def\no{\nonumber}
\def\ue{\mathrm{e}}
\def\ui{\mathrm{i}}
\def\ds{\displaystyle}

\def\z{\zeta}
\def\e{\eta}
\def\na{\nabla}
\def\de{\Delta}
\def\c{c}
\def\b{b}
\def\y{\ast}
\def\be{\bar\e}
\def\l{L_3}

\def\t{\mathbb{T}}
\def\s{\mathcal{S}}
\def\a{\mathcal{A}}
\def\dd{\mathbb{D}}

\def\en{\mathcal{E}}
\def\ben{\bar\en}
\def\di{\mathcal{D}}
\def\bdi{\bar\di}
\def\f{\mathcal{F}}

\def\nl{\nabla_{\a}}
\def\dl{\dd_{\a}}
\def\sl{\s_{\a}}
\def\ll{\de_{\a}}
\def\n{\mathcal{N}}
\def\al{\alpha}
\def\pp{\mathcal{P}}

\begin{document}
\title{Dynamics and Stability of Surface Waves with Bulk-Soluble Surfactants}

\author[I. Tice]{Ian Tice}
\address[I. Tice]{
   \newline\indent Department of Mathematical Sciences, Carnegie Mellon University
\newline\indent Pittsburgh, PA 15213, USA}
\email{iantice@andrew.cmu.edu}
\thanks{I. Tice was supported by a grant from the Simons Foundation (\#401468)}

\author[L. Wu]{Lei Wu}
\address[L. Wu]{
   \newline\indent Department of Mathematical Sciences, Carnegie Mellon University
\newline\indent Pittsburgh, PA 15213, USA}
\email{lwu2@andrew.cmu.edu}
\thanks{L. Wu was supported by NSF grant 0967140.}

\subjclass[2010]{Primary, 35Q30, 35R35, 76D45 ; Secondary, 35B40, 76E17, 76E99 }

\keywords{Surfactants, Navier-Stokes equations, interfacial stability}

\begin{abstract}
In this paper we study the dynamics of a layer of incompressible viscous fluid bounded below by a rigid boundary and above by a free boundary, in the presence of a uniform gravitational field.  We assume that a mass of surfactant is present both at the free surface and in the bulk of fluid, and that conversion from one species to the other is possible.  The surfactants couple to the fluid dynamics through the coefficient of surface tension, which depends on the the surface density of surfactants.  Gradients in this concentration give rise to Marangoni stress on the free surface.  In turn, the fluids advect the surfactants and distort their concentration through geometric distortions of the free surface. We model the surfactants in a way that allows absorption and desorption of surfactant between the surface and bulk.   We prove that small perturbations of the equilibrium solutions give rise to global-in-time solutions that decay to equilibrium at an exponential rate.  This establishes the asymptotic stability of the equilibrium solutions.
\end{abstract}

\maketitle

\pagestyle{myheadings} \thispagestyle{plain} \markboth{IAN TICE AND LEI WU}{DYNAMICS AND STABILITY OF SURFACE WAVE WITH SURFACTANT}

\section{Introduction}

\subsection{Model presentation: soluble surfactants}

Surfactants (a portmanteau for ``surface active agents'') are chemicals that can change the strength of surface tension when they collect at fluid free interfaces.  Variations in the surfactant concentration on the surface also give rise to tangential surface forces called Marangoni forces. The surfactant dynamics are driven by several effects: absorption and desorption from the free surface, fluid transport along the surface and in the bulk, and both bulk and surface diffusion.  We refer to the books  \cite{ed_bren_was,levich} and the review \cite{Sar} for a more thorough discussion of surfactant physics.  In manufacturing and industrial applications surfactants are a fundamental tool for  stabilizing bubble formation in processes such as foaming, emulsifying, and coating (see the books \cite{myers,rosen} for an exhaustive list of surfactant applications).  Surfactants also play a critical role in preventing the collapse of the lungs during breathing (see  \cite{hills} and the references therein) and are currently being developed as tools to aid in drug delivery in the lungs (see for example \cite{garoff_2, garoff}).

We consider a model of surfactants in which a single layer of fluid occupies the three-dimensional domain $\Omega(t)$ with free boundary surface $\Gamma(t)$.   In order to allow for surfactants concentrated on the free surface and in the bulk, we define the surface concentration $\hat{c}(\cdot,t): \Gamma(t) \to [0,\infty)$ as well as the bulk concentration $\hat{b}(\cdot,t) : \Omega(t) \to [0,\infty)$. The coefficient of surface tension on $\Gamma(t)$ depends on the surfactant concentration $\hat{c}$ via a relation $\sigma = \sigma(\hat{c})$, where we assume that the surface tension function satisfies:
\begin{equation}\label{sigma_assume}
\begin{cases}
\sigma \in C^3([0,\infty)) \\
\sigma \text{ is positive and strictly decreasing.}
\end{cases}
\end{equation}
The latter assumption comes from the fact that surfactants decrease the surface tension in higher concentration, and the former assumption is merely a technical assumption needed for our PDE analysis.

We will assume that the fluid is incompressible and viscous (with viscosity $\mu >0$) and that a uniform gravitational field $-g e_3 = (0,0,-g) \in \mathbb{R}^3$ is applied to the fluid (here $g>0$).  The fluid and surfactant dynamics then couple through the following system of equations (see \cite{ed_bren_was,levich} for derivations and precise definitions of the operators: we will soon reformulate these equations so do not fully define the operators here)
\begin{equation}\label{surf}
 \begin{cases}
\partial_t u + u \cdot \nabla u + \nabla p  = \mu \Delta u - g  e_3  \text{ and } \nabla \cdot u=0  & \text{in }$\Omega(t)$ \\
p\nu -\mu (\nabla u + \nabla u^T) \nu  =  - \sigma(\hat{c}) \mathcal{H}_{\Gamma(t)} \nu - \nabla_{\Gamma(t)} (\sigma(\hat{c})), & \text{on } $\Gamma(t)$ \\
 D_t \hat{c} + \hat{c} \nabla_{\Gamma(t)} \cdot u = \gamma \Delta_{\Gamma(t)} \hat{c} -\beta \nabla \hat{b} \cdot \nu &\text{on }$\Gamma(t)$ \\
 \dt \hat{b} + u \cdot \nabla \hat{b} = \beta \Delta \hat{b} & \text{in }$\Omega(t)$ \\
 \beta \nabla \hat{b} \cdot \nu = \omega(\hat{c},\hat{b}) &\text{on }$\Gamma(t)$.
\end{cases}
\end{equation}
Here $\nabla_{\Gamma(t)}$ denotes the surface gradient on $\Gamma(t)$,  $D_t$ is a temporal derivative along the flowing surface, $\nabla_{\Gamma(t)}\cdot$ is the surface divergence, $\Delta_{\Gamma(t)}$ is the surface Laplacian,  $\gamma > 0$ is the surface surfactant diffusion constant, $\beta > 0$ is the bulk surfactant diffusion constant, and $\mathcal{H}_{\Gamma(t)}$ is twice the mean-curvature operator on $\Gamma(t)$.  The first two equations in \eqref{surf} are the usual incompressible Navier-Stokes equations.  The third equation is the balance of stress on the free surface, and the right-hand side shows that two stresses are generated by the surfactants.  The first term is a normal stress caused by surface curvature, and the second is a tangential stress, known as the Marangoni stress, caused by gradients in the surfactant concentration on the surface.   The fourth equation in \eqref{surf} shows that the surface surfactant concentration changes due to flow on the surface as well as diffusion and the normal derivative of $\hat{b}$, and the fifth equation is a advection-diffusion equation for the bulk surfactant concentration.

The sixth equation in \eqref{surf} is of fundamental importance in surfactant modeling, as it gives the law for the conversion of bulk surfactant into surface surfactant  at the free interface.  We refer, for instance, to the book \cite{ed_bren_was} for a derivation of such a law from physical kinetic arguments.  More recent work \cite{diamant_andelman,diamant_ariel_etal,garcke_lam_stinner} has derived laws of a similar form for two-phase flows using free-energy arguments.   Here the function $\omega: [0,\infty) \times [0,\infty) \to \mathbb{R}$ describes the surfactant flow from the surface to bulk phase.  We will assume that $\omega$ obeys the following:
\begin{equation}\label{omega_assume}
\begin{cases}
 \omega \in C^4([0,\infty \times [0,\infty)) \\
 \text{there exists }f \in C^2([0,\infty);[0,\infty)) \text{ such that } \omega(c,b) >0 \Leftrightarrow c > f(b) \\
 f'(b) >0 \text{ for all } b \ge 0.
\end{cases}
\end{equation}
In the parlance of surface chemistry, the set $\{\omega(c,b) =0\}$ is called an isotherm.  It represents the values of surface and bulk surfactant that do not result in conversion between species, and consequently any equilibrium configuration must lie on the isotherm.  Our first assumption on $\omega$ is purely technical, but the second and third indicate that the isotherm is given as the graph of the function $f$. The sign condition on $f'$ guarantees that more surface surfactant is needed to form equilibrium when more bulk surfactant is present in the fluid.  As an example, in the Langmuir absorption model (see for instance \cite{ed_bren_was}) we have
\begin{equation}
 \omega(\c,\b)=-\b(k_1-\c)+k_2\c \text{ for physical constants }k_1,k_2>0.
\end{equation}
In this case $f(b) = bk_1/(k_2 + b)$.

Surfactants have been extensively studied in the physics literature, and we will not attempt to survey that literature here.  In contrast, they have not received extensive attention in the mathematics literature.  Kwan-Park-Shen \cite{kwan_park_shen} and Xu-Li-Lowengrub-Zhao \cite{xu_li_low_zhao} developed numerical studies of surfactant dynamics.  The local well-posedness of a two-phase bubble model without gravity was proved by Bothe-Pr\"{u}ss-Simonett \cite{bo_pr_si_1} in the context of diffusion-limited absorption, which means that the last condition in \eqref{surf} is replaced by the condition $c = f(b)$ so that the surfactant phases are in constant equilibrium.    The linear stability of the same model was studied by Bothe-Pr\"{u}ss \cite{bo_pr_1}.  In \cite{Kim.Tice2016} Kim-Tice studied the dynamics of surfactants without absorption and proved the existence of global-in-time solutions near equilibrium as well as their asymptotic stability.  Diffuse interface models with absorption laws similar to \eqref{surf} have also recently been studied by Garcke-Lam-Stinner \cite{garcke_lam_stinner} and Abels-Garcke-Lam-Weber \cite{abels_etal}.

The principal goal of the present paper is to extend the techniques developed in \cite{Kim.Tice2016} to handle the absorption model presented in \eqref{surf}.  We will construct global solutions near equilibrium and prove their long-time decay to equilibrium.

\subsection{Problem presentation: formulation of equations}

We now give a precise description of the equations of motion.  To begin we assume that the fluid occupies a three-dimensional domain $\Omega(t)$ that is horizontally periodic with a flat rigid bottom and free upper surface:
\begin{eqnarray}
\Omega(t)=\{y\in\Sigma\times\r: -\l < y_3< \e(t,y_1,y_2)\},
\end{eqnarray}
where the horizontal cross-section is given by
\begin{eqnarray}
\Sigma=(L_1\t)\times(L_2\t) = (\mathbb{R} / L_1 \mathbb{Z}) \times (\mathbb{R} / L_2 \mathbb{Z})
\end{eqnarray}
for $L_1,L_2>0$ fixed periodicity lengths and depth $\l>0$, and the free surface function is $\eta : \Sigma \times [0,\infty) \to (-\l,\infty)$.  We will denote the fixed lower boundary by
\begin{eqnarray}
\Sigma_b=\{y\in\Sigma\times\r: y_3=-\l\}.
\end{eqnarray}
and the moving upper boundary by
\begin{eqnarray}
\Gamma(t)=\{y\in\Sigma\times\r: y_3 = \e(t,y_1,y_2)\},
\end{eqnarray}
which is the graph of the unknown function $\e$.

For each $t\ge 0$, the dynamics of the fluid and surfactants are described by the following unknowns.  The fluid velocity and pressure are  $u(\cdot,t): \Omega(t) \to \mathbb{R}^3$ and $p(\cdot,t) : \Omega(t) \to \mathbb{R}^3$, and the free surface function is $\e(\cdot,t): \Sigma\rt\r$.   It will be convenient to study the projection onto $\Sigma$ of the surface surfactant concentration, so we define  $\tilde\c(\cdot,t):\Sigma\rt\r$ via $\tilde\c(y_{\y},t)=\hat\c\Big(y_{\y},\e(y_{\y},t),t\Big)$ where $\hat{c}(\cdot,t)$ is defined on the moving surface $\Gamma(t)$ and $y_{\y}=(y_1,y_2) \in \Sigma$.  To be consistent with the new notation we also write $\tilde{b}(\cdot,t) = \hat{b}(\cdot,t) : \Omega(t) \to [0,\infty)$ for the bulk surfactant concentration.

In the following, we will employ the horizontal differential operators
\begin{equation}
\na_{\y}F = \sum_{i=1}^2\Big(\p_iF e_i\Big) \text{ and }
\na_{\y}\cdot  G = \sum_{i=1}^2\Big(\p_iG_i\Big)
\end{equation}
acting on scalar and vector fields, respectively.  Similarly, we define $\nu_{\y}=(\nu_1,\nu_2)$  to be the horizontal components of outward unit normal vector on the moving surface:
\begin{eqnarray}
\nu=(\nu_1,\nu_2,\nu_3)=\frac{(-\na_{\y}\e,1)}{\sqrt{1+\abs{\na_{\y}\e}^2}}.
\end{eqnarray}
The surface differential operators are defined as
\begin{eqnarray}
\p_{\Gamma,1}&=&\frac{1+\abs{\p_2\e}^2}{1+\abs{\na\e}^2}\p_1-\frac{\p_1\e\p_2\e}{1+\abs{\na\e}^2}\p_2,\\
\p_{\Gamma,2}&=&\frac{1+\abs{\p_1\e}^2}{1+\abs{\na\e}^2}\p_2-\frac{\p_1\e\p_2\e}{1+\abs{\na\e}^2}\p_1,\\
\p_{\Gamma,3}&=&\frac{\p_1\e}{1+\abs{\na\e}^2}\p_1+\frac{\p_2\e}{1+\abs{\na\e}^2}\p_2.
\end{eqnarray}
Then we have for $f$ a scalar and $g$ a vector field defined in $\Sigma$,
\begin{equation}
\na_{\Gamma}f = \sum_{i=1}^3\Big(\p_{\Gamma,i}f  e_i\Big) \text{ and }
\na_{\Gamma}\cdot  g = \sum_{i=1}^3\Big(\p_{\Gamma,i}g_i\Big).
\end{equation}
Also, for $F$ and $G$ defined in $\Gamma(t)$,
\begin{equation}
\na_{\Gamma}F = \na_{\Gamma}(F\circ\e) \text{ and }
\na_{\Gamma}\cdot  G = \na_{\Gamma}\cdot(G\circ\e).
\end{equation}
Thus, we define $\de_{\Gamma}=\na_{\Gamma}\cdot\na_{\Gamma}$.

The fluid, surface,  and surfactant unknowns $(u,p,\e,\tilde\c,\tilde\b)$ must satisfy the following system of equations:
\begin{equation}\label{origin}
\left\{
\begin{array}{ll}
\dt u+u\cdot\na u+\na p=\mu\de u&\ \ \text{in}\ \ \Omega(t),\\
\na\cdot u=0 &\ \ \text{in}\ \ \Omega(t),\\
(pI-\mu\dd u)\nu=g\e\nu-\sigma(\tilde\c)H(\e)\nu-\na_{\Gamma}\sigma(\tilde\c)&\ \ \text{on}\ \ \Gamma(t),\\
u=0&\ \ \text{on}\ \ \Sigma_b,\\
\dt\e=u_3-u_1\p_{y_1}\e-u_2\p_{y_2}\e &\ \ \text{on}\ \ \Gamma(t),\\
\ \\
\dt\tilde\c+u\cdot\na_{\y}\tilde\c+\tilde\c\na_{\Gamma}\cdot u=\gamma\de_{\Gamma}\tilde\c-\beta\na\tilde\b\cdot\nu &\ \ \text{on}\ \ \Gamma(t),\\
\dt\tilde\b+u\cdot\na \tilde\b=\beta\de\tilde\b&\ \ \text{in}\ \ \Omega(t),\\
\beta\na\tilde\b\cdot\nu=\omega(\tilde\c,\tilde\b)&\ \ \text{on}\ \ \Gamma(t),\\
\na\tilde\b\cdot\nu=0&\ \ \text{on}\ \ \Sigma_b,\\
\ \\
u(t=0)=u_0&\ \ \text{in}\ \ \Omega_0,\\
\e(t=0)=\e_0&\ \ \text{on}\ \ \Sigma,\\
\tilde\b(t=0)=\tilde\b_0&\ \ \text{in}\ \ \Omega_0,\\
\tilde\c(t=0)=\tilde\c_0&\ \ \text{on}\ \ \Sigma.
\end{array}
\right.
\end{equation}
The first set of equations describes the motion of fluid under the influence of surfactant and the second set describes the absorption and desorption of surfactant with convection and diffusion both on the surface and in the bulk. Here viscosity $\mu$, gravity $g$, convection strength $\beta$, and diffusion strength $\gamma$ are positive constants, $I$ the $3\times3$ identity matrix, $(\dd u)_{ij}=\p_ju_i+\p_iu_j$ the symmetric gradient, and the mean-curvature operator is
\begin{eqnarray}
H(\e)=\na_{\y}\cdot\left(\frac{\na_{\y}\e}{\sqrt{1+\abs{\na_{\y}\e}^2}}\right).
\end{eqnarray}

Employing a standard scaling argument in space and time we can eliminate two of the physical constants at the expense of possibly renaming the rest.  We will do this in order to set  $\mu=g=1$.  We will employ this convention throughout the rest of this paper.  Also, we assume the initial surface $\e_0$ satisfies the zero-average condition
\begin{eqnarray}\label{z_avg}
\int_{\Sigma}\e_0=0,
\end{eqnarray}
and the total mass of surfactant
\begin{eqnarray}\label{conserv_c}
\int_{\Sigma}\tilde\c_0\sqrt{1+\abs{\na_{\y}\e_0}^2}+\int_{\Omega(t)}\tilde\b_0 :=M >0.
\end{eqnarray}
Standard calculations, which we omit here for the sake of brevity, show that these conditions persist in time, i.e.
\begin{equation}\label{mass}
 \int_{\Sigma} \e(\cdot,t) = 0 \text{ and }\int_{\Sigma}\tilde\c(\cdot,t) \sqrt{1+\abs{\na_{\y}\e(\cdot,t)}^2}+\int_{\Omega(t)}\tilde\b_0(\cdot,t) =M
\end{equation}
for $t>0$.  Note, though, that the latter condition only guarantees that the total surfactant mass is preserved in time.  The portion of mass in each phase (bulk and surface) does not have to be preserved.

\subsection{Energy-dissipation structure}

We may follow the computations in  \cite{Kim.Tice2016}, with the help of Lemma \ref{app lemma 2}, to derive the following  energy-dissipation structure from the fluid equations:
\begin{eqnarray}\label{it 01}
\frac{\ud{}}{\ud{t}}\bigg(\half\int_{\Omega(t)}\abs{u}^2+\half\int_{\Sigma}\abs{\e}^2\bigg)
+\half\int_{\Omega(t)}\abs{\dd u}^2=-\int_{\Sigma}\sigma(\tilde\c)(\na_{\Gamma}\cdot u)\sqrt{1+\abs{\na_{\y}\e}^2}.
\end{eqnarray}
We might hope to use similar computations with $\tilde{c}$ and $\tilde{b}$ to produce further energy-dissipation equations that sum with the above to produce an equation with no exchange terms on the right.  Unfortunately, this does not work if we directly work with the $\tilde{c}$ and $\tilde{b}$.  Instead we extend an idea used in  \cite{bo_pr_1,Kim.Tice2016} for the case without absorption, which shifts from an $L^2-$based estimate to a more complicated estimate.  To this end, for any $r\in(0,\infty)$ we define the auxiliary functions $\zeta_r,\phi_r :[0,\infty) \to \mathbb{R}$ via
\begin{eqnarray}
\z_r(s)&=&  s \left( \frac{\sigma(r)}{r} - \int_{r}^s \frac{\sigma(z)}{z^2} \ud{z}  \right)  = \sigma(s)-s\int_r^{s}\frac{\sigma'(z)}{z}\ud{z},\\
\phi_r(s)&=&C_0+\int_{0}^s\frac{z\sigma'(f(z))f'(z)}{f(z)}\ud{z}-s\int_r^{f(s)}\frac{\sigma'(z)}{z}\ud{z}
\end{eqnarray}
for a constant $C_0>0$ chosen such that $\phi_r(s) >0$ for all $s \ge 0$.  This is possible since $f'(0) >0$ implies that
\begin{equation}
\lim_{z\to0} \frac{zf'(z)}{f(z)} =
\begin{cases}
 0 & \text{if } $f(0)>0$ \\
 1 & \text{if } $f(0)=0$.
\end{cases}
\end{equation}
Note also that
\begin{equation}
 0 \le s \le r \Rightarrow \abs{s\int_r^{s}\frac{\sigma'(z)}{z}\ud{z}} \le  \Vert \sigma'\Vert_{L^\infty([0,r])} s\log\left( \frac{r}{s} \right),
\end{equation}
so the $\sigma'$ integral in the definition of $\z_r$ is well-defined for all $s \ge 0$.  A similar argument shows that the second integral in the definition of $\phi_r$ is well-defined even when $f(0)=0$.

By construction, we have
\begin{equation}
\z_r'(s)=-\int_r^{s}\frac{\sigma'(z)}{z}\ud{z} \text{ and }
\phi_r'(s)=-\int_r^{f(s)}\frac{\sigma'(z)}{z}\ud{z},
\end{equation}
as well as
\begin{eqnarray}
\z_r(s),\phi_r(s),\z_r''(s),\phi_r''(s)&>&0,\\
\z_r(s)-s\z_r'(s)&=&\sigma(s).
\end{eqnarray}
Moreover, $\z_r$ attains its minimum at $s=r$ and $\phi_r$ attains its minimum if there exists $s \ge 0$ such that $f(s) =r$.  With these auxiliary functions in hand, we may then follow \cite{Kim.Tice2016}, using the identities in Lemma \ref{app lemma 3}, to see that
\begin{eqnarray}\label{it 02}
\frac{\ud{}}{\ud{t}}\int_{\Sigma}\zeta_r(\tilde\c)\sqrt{1+\abs{\na_{\y}\e}^2}+\gamma\int_{\Sigma}\zeta_r''(\tilde\c)\abs{\na_{\Gamma}\tilde\c}^2\sqrt{1+\abs{\na_{\y}\e}^2}\\
+\int_{\Sigma}\zeta'_r(\tilde\c)\omega(\tilde\c,\tilde\b)\sqrt{1+\abs{\na_{\y}\e}^2}&=&\int_{\Sigma}\sigma(\tilde\c)(\na_{\Gamma}\cdot u)\sqrt{1+\abs{\na_{\y}\e}^2},\no
\end{eqnarray}
and
\begin{eqnarray}\label{it 03}
\frac{\ud{}}{\ud{t}}\int_{\Omega(t)}\phi_r(\tilde\b)+\beta\int_{\Omega(t)}\phi''_r(\tilde\b)\abs{\na\tilde\b}^2
-\int_{\Sigma}\phi'_r(\tilde\b)\omega(\tilde\c,\tilde\b)\sqrt{1+\abs{\na_{\y}\e}^2}&=&0.
\end{eqnarray}
Summing up \eqref{it 01}, \eqref{it 02} and \eqref{it 03}, we obtain the full energy-dissipation structure:
\begin{eqnarray}
\frac{\ud{}}{\ud{t}}\bigg(\half\int_{\Omega(t)}\abs{u}^2+\half\int_{\Sigma}\abs{\e}^2
+\int_{\Omega(t)}\phi_r(\tilde\b)+\int_{\Sigma}\zeta_r(\tilde\c)\sqrt{1+\abs{\na_{\y}\e}^2}\bigg)\\
+\bigg(\half\int_{\Omega(t)}\abs{\dd u}^2+\beta\int_{\Omega(t)}\phi''_r(\tilde\b)\abs{\na\tilde\b}^2+\gamma\int_{\Sigma}\zeta_r''(\tilde\c)\abs{\na_{\Gamma}\tilde\c}^2\sqrt{1+\abs{\na_{\y}\e}^2}\bigg)\no\\
+\int_{\Sigma}\Big(\zeta'_r(\tilde\c)-\phi'_r(\tilde\b)\Big)\omega(\tilde\c,\tilde\b)\sqrt{1+\abs{\na_{\y}\e}^2}&=&0.\no
\end{eqnarray}

The terms appearing on the first and second lines of the energy-dissipation equality are clearly positive semi-definite, but at first glance it is not clear that the term on the third line possesses a good sign.  To show that this is in fact true we first note that the definitions of $\zeta_r$ and $\phi_r$ allow us to compute
\begin{eqnarray}
\Big(\zeta'_r(\tilde\c)-\phi'_r(\tilde\b)\Big)\omega(\tilde\c,\tilde\b)=\bigg(\int_{\tilde{\c}}^{f(\tilde\b)}\frac{\sigma'(z)}{z}\ud{z}\bigg)\omega(\tilde\c,\tilde\b).
\end{eqnarray}
Employing the assumptions on $\sigma$ \eqref{sigma_assume} and $\omega$ \eqref{omega_assume}, we then find that
\begin{eqnarray}
\left\{
\begin{array}{l}
\tilde\c>f(\tilde\b)\Rightarrow \ds\int_{\tilde{\c}}^{f(\tilde\b)}\frac{\sigma'(z)}{z}\ud{z}>0\ \ \text{and}\ \ \omega(\tilde\c,\tilde\b)>0,\\
\tilde\c<f(\tilde\b)\Rightarrow \ds\int_{\tilde{\c}}^{f(\tilde\b)}\frac{\sigma'(z)}{z}\ud{z}<0\ \ \text{and}\ \ \omega(\tilde\c,\tilde\b)<0.
\end{array}
\right.
\end{eqnarray}
Thus, in both cases we have
\begin{eqnarray}
\int_{\Sigma}\Big(\zeta'_r(\tilde\c)-\phi'_r(\tilde\b)\Big)\omega(\tilde\c,\tilde\b)\sqrt{1+\abs{\na\e}^2}\geq0.
\end{eqnarray}
Therefore, if we define
\begin{eqnarray}
E&=&\half\int_{\Omega(t)}\abs{u}^2+\half\int_{\Sigma}\abs{\e}^2+\int_{\Omega(t)}\phi_r(\tilde\b)+\int_{\Sigma}\zeta_r(\tilde\c)\sqrt{1+\abs{\na\e}^2},  \\
D&=&\half\int_{\Omega(t)}\abs{\dd u}^2+\beta\int_{\Omega(t)}\phi''_r(\tilde\b)\abs{\na\tilde\b}^2+\gamma\int_{\Sigma}\zeta_r''(\tilde\c)\abs{\na_{\Gamma}\tilde\c}^2\sqrt{1+\abs{\na\e}^2} \\
&+&   \int_{\Sigma}\Big(\zeta'_r(\tilde\c)-\phi'_r(\tilde\b)\Big)\omega(\tilde\c,\tilde\b)\sqrt{1+\abs{\na\e}^2} \nonumber,
\end{eqnarray}
then we know $E$ and $D$ are positive semi-definite and satisfy the energy-dissipation equation
\begin{eqnarray}\label{basic_ed}
\frac{\ud{E}}{\ud{t}}+D = 0.
\end{eqnarray}

\subsection{Equilibria}

We now turn our attention to a discussion of the equilibrium solutions.  Assume we have a time-independent solution.  This assumption and \eqref{basic_ed} imply that
\begin{equation}
\abs{\dd u}^2 = \phi''_r(\tilde\b)\abs{\na\tilde\b}^2 = \zeta_r''(\tilde\c)\abs{\na_{\Gamma}\tilde\c}^2 = \Big(\zeta'_r(\tilde\c)-\phi'_r(\tilde\b)\Big) \omega(\tilde{c},\tilde{b}) = 0.
\end{equation}
Since $u=0$ on $\Sigma_b$, we know from Korn's inequality that $u=0$. Considering that $\phi_r$ and $\zeta_r$ are strictly convex, we know $\na\tilde\b=\na_{\Gamma}\tilde\c=0$. Hence, we have $\tilde\b=\b_0\in(0,\infty)$, $\tilde\c=\c_0\in(0,\infty)$. Therefore, by the boundary condition of fluid equation, we know $p=\e=0$.   Furthermore,  $\b_0$ and $\c_0$ must preserve the total mass as in \eqref{mass},
\begin{equation}
\abs{\Sigma}_2 c_0 + \abs{\Omega}_3 b_0 = \int_\Sigma c_0 +   \int_\Omega b_0 =M,
\end{equation}
where $\abs{\cdot}_k$ denotes $k-$dimensional Lebesgue measure,  as well as satisfy the condition $\omega(\c_0,\b_0)=0$.  The latter requires that $f(\b_0) = \c_0$.  Since $f'>0$, for given total mass $M > \abs{\Sigma}_2 f(0)$, we have a unique solution $\b_0,\c_0 = f(\b_0) >0$.

In summary, we have the equilibrium configuration
\begin{equation}\label{equilibrium_config}
\left\{
\begin{array}{rcl}
u&=&0\\
p&=&0\\
\e&=&0\\
\omega(\c_0,\b_0)&=&0,\\
\abs{\Sigma}_2 \c_0+\abs{\Omega}_3\b_0&=&M,
\end{array}
\right.
\end{equation}
where $c_0,b_0 \in (0,\infty)$ are uniquely determined by the choice of mass $0 \le \abs{\Sigma}_2 f(0) < M <\infty$ and the form of the surfactant flux function $\omega$.

It is worth noting that in the definition of $\zeta_r$ and $\phi_r$ above we have that $\zeta_r$ obtains its minimal value at $s=r$ and $\phi_r$ obtains its minimum at $s$ such that $f(s) =r$.  If we choose $r = c_0$ then the minimal value of $\zeta_r$ is obtained at $c_0$ and the minimal value of $\phi_r$ is obtained at $f(c_0) = b_0$.  Thus \eqref{basic_ed} suggests that the equilibrium configuration will be dynamically stable.

We conclude our discussion of the equilibria by introducing some notation that will be useful later.  We linearize $\omega$ around $(\c_0,\b_0)$ as
\begin{eqnarray}\label{linearize}
\omega(\c_0+h,\b_0+d)&=&\omega(\c_0,\b_0)+\omega_{\c0}h+\omega_{\b0}d+2hd\int_0^1(1-s)\p_{\c\b}\omega(\c_0+sh,\b_0+sd)\ud{s}\\
&&h^2\int_0^1(1-s)\p_{\c\c}\omega(\c_0+sh,\b_0+sd)\ud{s}+d^2\int_0^1(1-s)\p_{\b\b}\omega(\c_0+sh,\b_0+sd)\ud{s}\no\\
&=&\omega(\c_0,\b_0)+\omega_{\c0}h+\omega_{\b0}d+O(\abs{h}^2+\abs{d}^2)=\omega_{\c0}h+\omega_{\b0}d+O(\abs{h}^2+\abs{d}^2),\no
\end{eqnarray}
where we have written
\begin{equation}
 \omega_{\c0} := \partial_c \omega(c_0,b_0) \text{ and }  \omega_{\b0} := \partial_b \omega(c_0,b_0).
\end{equation}
Since $\omega(f(b),b)=0$ for all $b \ge 0$, we may differentiate to see that
\begin{eqnarray}
\omega_{\c0}f'(\b_0)+\omega_{\b0}=0,
\end{eqnarray}
which implies
\begin{eqnarray}
\frac{\omega_{\b0}}{\omega_{\c0}}=-f'(\b_0).
\end{eqnarray}
Also, since $\omega(\c,\b)>0$ leads to $\c> f(\b)$, we have $\omega_{\c0}>0$ and $\omega_{\b0}<0$.

\subsection{Reformulation}

In order to work in a fixed domain, we employ a frequently used transformation: see \cite{B,GT1,GT2,Kim.Tice2016,WTK}.  We define the equilibrium fluid domain $\Omega$ via
\begin{eqnarray}
\Omega=\{y\in\Sigma\times\r: -\l < y_3 < 0\},
\end{eqnarray}
which possesses $\Sigma$ as upper boundary and $\Sigma_b$ as lower boundary. We can then view  $\e$ as defined on $\Sigma$. Assume $\be$ is the harmonic extension of $\e$ into $\Sigma \times (-\infty,0)$ defined as in \eqref{Poisson}. Define mapping
\begin{eqnarray}\label{phi_def}
\Phi: \Omega\ni x=(x_1,x_2,x_3) \mapsto \bigg(x_1,x_2,x_3 + \be(x,t) \left(1+\frac{x_3}{\l}\right)\bigg)\in\Omega(t).
\end{eqnarray}
We may easily check that $\Phi$ maps $\Sigma$ to $\Gamma(t)$ and maps $\Sigma_b$ to $\Sigma_b$. The Jacobian matrix is
\begin{eqnarray}
\na\Phi=\left(
\begin{array}{ccc}
1&0&0\\
0&1&0\\
A&B&J
\end{array}
\right),
\end{eqnarray}
and the transform matrix is
\begin{eqnarray}
\a=(\na\Phi)^{-T}=\left(
\begin{array}{ccc}
1&0&-AK\\
0&1&-BK\\
0&0&K
\end{array}
\right),
\end{eqnarray}
where
\begin{eqnarray}
&&A=\tilde{L}\frac{\p\be}{\p x_1} ,\ \ B=\tilde{L}\frac{\p\be}{\p x_2},\\
&&J=1+\frac{\be}{\l}+\tilde L\frac{\p\be}{\p x_3},\ \ K=\frac{1}{J},\  \tilde L=1+\frac{x_3}{\l}.\no
\end{eqnarray}
In this new coordinate system, \eqref{origin} is transformed into
\begin{equation}\label{transform}
\left\{
\begin{array}{ll}
\dt u-\dt\be\tilde LK\p_3 u+u\cdot\nl u-\ll u+\nl p=0&\ \ \text{in}\ \ \Omega,\\
\nl\cdot u=0 &\ \ \text{in}\ \ \Omega,\\
(pI-\dl u)\n=\e\n-\sigma(\tilde\c)H(\e)\n-\sqrt{1+\abs{\na_{\y}\e}^2}\sigma'(\tilde\c)\na_{\Gamma}\tilde\c&\ \ \text{on}\ \ \Sigma,\\
u=0&\ \ \text{on}\ \ \Sigma_b,\\
\dt\e=u\cdot\n &\ \ \text{on}\ \ \Sigma,\\
\ \\
\dt\tilde\c+u\cdot\na_{\y}\tilde\c+\tilde\c\na_{\Gamma}\cdot u=\gamma\de_{\Gamma}\tilde\c-\beta\nl\tilde\b\cdot\dfrac{\n}{\sqrt{1+\abs{\na_{\y}\e}^2}} &\ \ \text{on}\ \ \Sigma,\\
\dt\tilde\b-\dt\be\tilde LK\p_3\tilde\b+u\cdot\nl \tilde\b=\beta\ll\tilde\b&\ \ \text{in}\ \ \Omega,\\
\nl\tilde\b\cdot\dfrac{\n}{\sqrt{1+\abs{\na_{\y}\e}^2}}=\omega(\tilde\c,\tilde\b)&\ \ \text{on}\ \ \Sigma,\\
\p_3\tilde\b=0&\ \ \text{on}\ \ \Sigma_b.
\end{array}
\right.
\end{equation}
Here we have defined the transformed operators as follows:
\begin{equation}
\begin{array}{l}
(\nl f)_i=\a_{ij}\p_jf,\\
\nl\cdot g=\a_{ij}\p_jg_i,\\
\ll f=\nl\cdot\nl f,\\
\n=(-\p_1\e,-\p_2\e,1),\\
(\dl u)_{ij}=\a_{ik}\p_ku_j+\a_{jk}\p_ku_i,\\
\sl(p,u)=pI-\dl u,
\end{array}
\end{equation}
where the summation should be understood in the Einstein convention.
If we extend the divergence $\nl\cdot$ to act on symmetric tensors in the
natural way,
then a straightforward computation reveals $\nl\cdot\sl(p,u)=\nl p-\ll u$ for vector fields satisfying $\nl\cdot u=0$.

\subsection{Perturbation}

We will study the case when the initial data is a perturbation of the equilibrium. Define the perturbation as $\c=\tilde\c-\c_0$ and $\b=\tilde\b-\b_0$, then $(u,p,\e,\c,\b)$ satisfies
\begin{equation}\label{perturbation}
\left\{
\begin{array}{ll}
\dt u-\dt\be\tilde LK\p_3 u+u\cdot\nl u-\ll u+\nl p=0&\ \ \text{in}\ \ \Omega,\\
\nl\cdot u=0 &\ \ \text{in}\ \ \Omega,\\
(pI-\dl u)\n=\e\n-\sigma(\c+\c_0)H(\e)\n-\sqrt{1+\abs{\na_{\y}\e}^2}\sigma'(\c+\c_0)\na_{\Gamma}\c&\ \ \text{on}\ \ \Sigma,\\
u=0&\ \ \text{on}\ \ \Sigma_b,\\
\dt\e=u\cdot\n &\ \ \text{on}\ \ \Sigma,\\
\ \\
\dt\c+u\cdot\na_{\y}\c+(\c+\c_0)\na_{\Gamma}\cdot u=\gamma\de_{\Gamma}\c-\beta\nl\b\cdot\dfrac{\n}{\sqrt{1+\abs{\na_{\y}\e}^2}} &\ \ \text{on}\ \ \Sigma,\\
\dt\b-\dt\be\tilde LK\p_3\b+u\cdot\nl \b=\beta\ll\b&\ \ \text{in}\ \ \Omega,\\
\nl\b\cdot\dfrac{\n}{\sqrt{1+\abs{\na_{\y}\e}^2}}=\omega(\c+\c_0,\b+\b_0)&\ \ \text{on}\ \ \Sigma,\\
\p_3\b=0&\ \ \text{on}\ \ \Sigma_b.
\end{array}
\right.
\end{equation}
In the following, we will write $\sigma_0=\sigma(\c_0)$ and $\sigma'_0=\sigma'(\c_0)$.

\section{Main results and discussion}

\subsection{Main result}

To properly state the main results we must first define energy and dissipation functionals that will be used throughout the paper.   We define the full energy to be
\begin{eqnarray}\label{def_E}
\en&=&\tm{u}{2}^2+\tm{\dt u}{0}^2+\tm{p}{1}^2+\tms{\e}{3}^2+\tms{\dt\e}{\frac{3}{2}}^2+\tms{\dt^2\e}{-\frac{1}{2}}^2\\
&&+\tm{\b}{2}^2+\tm{\dt\b}{0}^2+\tms{\c}{2}^2+\tms{\dt\c}{0}^2,\no
\end{eqnarray}
and full dissipation to be
\begin{eqnarray}\label{def_D}
\di&=&\tm{u}{3}^2+\tm{\dt u}{1}^2+\tm{p}{2}^2+\tms{\e}{\frac{7}{2}}^2+\tms{\dt\e}{\frac{5}{2}}^2+\tms{\dt^2\e}{\frac{1}{2}}^2\\
&&+\tm{\b}{3}^2+\tm{\dt\b}{1}^2+\tms{\c}{3}^2+\tms{\dt\c}{1}^2.\no
\end{eqnarray}
Here the spaces $H^s$ denote the usual $L^2-$based Sobolev spaces of order $s$.

Now we present the main theorem on a priori estimates for solutions to \eqref{perturbation}.

\begin{theorem}[Proved later in Theorem \ref{apriori_est}]\label{aprioris_intro}
Suppose that $(u,p,\eta,c,b)$ solves \eqref{perturbation} on the temporal interval $[0,T]$.  Let $\en$ and $\di$ be as defined in \eqref{def_E} and \eqref{def_D}. Then there exists a universal constant $0 < \delta$ (independent of $T$) such that if
\begin{equation}
 \sup_{0\le t \le T} \en(t) \le \delta \text{ and } \int_0^T \di(t) dt < \infty,
\end{equation}
then
\begin{equation}
\sup_{0\le t \le T} e^{\lambda t} \en(t) + \int_0^T \di(t)dt \ls \en(0)
\end{equation}
for all $t \in [0,T]$, where $\lambda >0$ is a universal constant.
\end{theorem}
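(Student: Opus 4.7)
The plan is to upgrade the basic nonlinear energy-dissipation identity \eqref{basic_ed} to a differential inequality of the form $\frac{\ud{}}{\ud{t}}\widetilde{\en}+\di\ls\sqrt{\en}\,\di$, where $\widetilde{\en}$ is a functional equivalent to $\en$ under the smallness hypothesis $\sup_t\en\le\delta$, and then to establish the coercivity bound $\en\ls\di$. Combined, these give $\ud{\widetilde{\en}}/\ud{t}+\lambda\widetilde{\en}\le0$ after absorbing the $O(\sqrt{\delta})$ remainder into the left-hand side, and Gr\"onwall's inequality yields the exponential decay $\en(t)\ls\en(0)e^{-\lambda t}$ together with the integrated dissipation bound.

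I would carry out the argument in three steps. (i) Equivalence of energies: since $\zeta_{\c_0}$ and $\phi_{\b_0}$ are strictly convex with minima at $\c_0$ and $\b_0$, their Taylor expansions there are quadratic, so for $\en\le\delta$ sufficiently small the nonlinear integrand in $E$ is pointwise equivalent to $\abs{u}^2+\abs{\e}^2+\abs{\b}^2+\abs{\c}^2$ and the integrand in $D$ to $\abs{\dd u}^2+\abs{\na\b}^2+\abs{\na_{\Gamma}\c}^2+\abs{\c-f'(\b_0)\b}^2$. (ii) Higher-order temporal estimates: apply $\dt$ and $\dt^2$ to \eqref{perturbation} and repeat the derivation of \eqref{it 01}--\eqref{it 03} on the differentiated systems, obtaining control of $\tm{\dt u}{0}^2$, $\tm{\dt\b}{0}^2$, $\tms{\dt\c}{0}^2$, and $\tms{\dt^2\e}{-1/2}^2$ with dissipation in the matching one-order-higher norms; the transformed coefficients $\a,J$ and the Marangoni contribution $\sigma'(\c+\c_0)\na_{\Gamma}\c$ generate nonlinear remainders that, after Sobolev embedding, are absorbed into $\sqrt{\en}\,\di$. (iii) Spatial regularity by elliptic theory: the steady Stokes problem for $(u,p)$ with data built from $\dt u$, $\e$, and $\c$ recovers $\tm{u}{3}^2+\tm{p}{2}^2$; the Robin problem $-\beta\ll\b=-\dt\b+\text{l.o.t.}$ with boundary data $\omega(\c+\c_0,\b+\b_0)$ yields $\tm{\b}{3}^2$; an analogous surface elliptic estimate for $\c$ gives $\tms{\c}{3}^2$; and inverting the mean-curvature operator $\sigma_0 H(\e)$ from the normal-stress condition yields $\tms{\e}{7/2}^2$.

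The main obstacle is the coercivity $\en\ls\di$. Korn's inequality gives $\tm{u}{1}^2\ls\tm{\dd u}{0}^2$, the zero-average condition \eqref{z_avg} with Poincar\'e controls the low norms of $\e$, and Poincar\'e on $\Omega$ and $\Sigma$ handles the derivative parts of $\b$ and $\c$. The delicate piece is the $L^2$ norms of $\b$ and $\c$ themselves, since the absorption dissipation integrand satisfies
\[
\bigl(\zeta'_{\c_0}(\tilde\c)-\phi'_{\b_0}(\tilde\b)\bigr)\omega(\tilde\c,\tilde\b)\sim\frac{\abs{\sigma'_0}\omega_{\c0}}{\c_0}\bigl(\c-f'(\b_0)\b\bigr)^2,
\]
so only the combination $\c-f'(\b_0)\b$ is directly controlled on $\Sigma$. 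The missing direction comes from the conserved total mass \eqref{mass}: linearizing \eqref{conserv_c} about $(\c_0,\b_0)$ yields $\abs{\Sigma}_2\bar{\c}+\abs{\Omega}_3\bar{\b}=O(\en)$ for the spatial averages, while Poincar\'e on $\Sigma$ applied to $\c-\bar{\c}$, the trace inequality $\nm{\b|_\Sigma-\bar{\b}}_{L^2(\Sigma)}\ls\nm{\na\b}_{L^2(\Omega)}$, and the absorption integral combine to give $\abs{\bar{\c}-f'(\b_0)\bar{\b}}\ls\sqrt{\di}$. The resulting $2\times2$ linear system in $(\bar{\c},\bar{\b})$ has determinant $\abs{\Omega}_3+f'(\b_0)\abs{\Sigma}_2>0$, so both means are controlled by $\sqrt{\di}+\en$, and combined with Poincar\'e for the fluctuations this produces $\tm{\b}{0}^2+\tms{\c}{0}^2\ls\di$ after absorbing the $O(\en^2)$ contribution using smallness. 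Executing this identification cleanly while simultaneously absorbing all geometric and convective nonlinearities into $\sqrt{\delta}$ is the heart of the argument.
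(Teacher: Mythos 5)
Your proposal is correct in outline and follows the same architecture as the paper: differentiated energy--dissipation estimates with the nonlinear interactions absorbed as $\sqrt{\en}\,\di$, elliptic/Stokes comparison estimates to recover the full norms, a coercivity estimate driven by the combination $\c-f'(\b_0)\b$ together with conservation of total surfactant mass, and Gr\"onwall. Two points differ in route. First, for the key coercivity step you argue constructively: split $\c$ and $\b$ into means and fluctuations, control the fluctuations by Poincar\'e--Wirtinger and the trace inequality, and solve the $2\times 2$ system for the means, whose determinant $\abs{\Omega}_3+f'(\b_0)\abs{\Sigma}_2$ is nonzero precisely because $f'(\b_0)>0$. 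The paper packages the same fact as Lemma \ref{prelim lemma 1} and proves it by a compactness/contradiction argument; in the limit the identical $2\times 2$ system appears, so the two proofs are equivalent, yours being more quantitative and the paper's shorter to write. Second, you propose to ``repeat the derivation of \eqref{it 01}--\eqref{it 03}'' on the differentiated systems and then invoke equivalence of the nonlinear functionals $\zeta_r,\phi_r$ with quadratic ones. The differentiated unknowns do not satisfy the structure required by Lemma \ref{app lemma 3}, so the correct move---which the paper makes in Lemmas \ref{perturb 01} and \ref{perturb 02}---is to test the differentiated equations directly against the quadratic weights $-\sigma_0'h/\c_0$ and $-\beta\sigma_0'f'(\b_0)d/\c_0$; the sign-definite term $\int_\Sigma(h-f'(\b_0)d)(\omega_{\c0}h+\omega_{\b0}d)$ then comes from the linearization of $\omega$ rather than from convexity of $\zeta_r,\phi_r$. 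Your absorption of the geometric and convective remainders is otherwise the same. One caution: applying $\dt^2$ to the full system exceeds the available regularity (Theorem \ref{lwp_intro} only gives $\dt^2 u\in(\mathcal{X}_T)^*$); the $\dt^2\e$ terms in $\en$ and $\di$ should instead be read off from the time-differentiated kinematic boundary condition, as the paper does.
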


This theorem tells us that if solutions exist for which the energy functional $\mathcal{E}$ remains small and the dissipation functional is integrable in time, then in fact we have much stronger information: the energy decays exponentially and the integral of the dissipation is controlled by the initial energy.  In order for this result to be useful we must couple it with a local existence result.  By now it is well-understood how to construct local-in-time solutions for solutions to problems of the form \eqref{perturbation} once the corresponding a priori estimates are understood: we refer for instance to \cite{GT1,WTK,Wu} for local existence results in spaces determined by energies and dissipations of the form \eqref{def_E} and \eqref{def_D}, and to \cite{bo_pr_si_1} for results that employ $L^p-$maximal regularity techniques.  Consequently, in the interest of brevity, we will not attempt to prove a local existence result in the present paper.  Instead we will simply state the result that one can prove by modifying the known methods in straightforward ways.

Given the initial data $u_0,\eta_0,\tilde{c}_0$,$\tilde{b}_0$, we need to construct the initial data $\partial_t u(\cdot,0)$, $\partial_t \eta(\cdot,0)$, $\dt c(\cdot,0)$, $\dt b(\cdot,0)$, and $p(\cdot,0)$.  To construct these we  require a compatibility condition for the data.  To state this properly we  define the orthogonal projection onto the tangent space of the surface $\Gamma(0) = \{x_3 = \eta_0(x_\ast)\}$ according to
\begin{equation}
 \Pi_0 v = v - (v\cdot \mathcal{N}_0) \mathcal{N}_0 \abs{\mathcal{N}_0}^{-2}
\end{equation}
for $\mathcal{N}_0 = (-\p_1 \eta_0,-\p_2 \eta_0,1)$.  Then the compatibility conditions for the data read
\begin{equation}\label{compat_cond}
\begin{cases}
\Pi_0 (\mathbb{D}_{\mathcal{A}_0} u_0 \mathcal{N}_0) - \sqrt{1+\abs{\nabla_\ast \eta_0}^2} \sigma'(\tilde{c}_0) \nabla_{\Gamma_0} \tilde{c}_0 =0 &\text{on }$\Sigma$ \\
\nabla_{\mathcal{A}_0}\cdot u_0 =0 & \text{in }$\Omega$ \\
u_0 =0 & \text{on } $\Sigma_b$,
\end{cases}
\end{equation}
where here $\mathcal{A}_0$ and $\Gamma_0$ are determined by $\eta_0$.  To state the local result we will also need to define  $\mathcal{H}^1 := \{ u \in H^1(\Omega) \;\vert\;  u\vert_{\Sigma_b}=0\}$ and
\begin{equation}\label{X_space_def}
\mathcal{X}_T = \{u \in L^2([0,T];\mathcal{H}^1)\;\vert\; \nabla_{\mathcal{A}(t)} \cdot u(t)=0 \text{ for a.e. } t\}.
\end{equation}

Having stated the compatibility conditions, we can now state the local existence result.

\begin{theorem}\label{lwp_intro}
Let $u_0 \in H^2(\Omega)$,   $\eta_0 \in H^3(\Sigma)$, $\tilde{c}_0 \in H^2(\Sigma)$, $\tilde{b}_0 \in H^2(\Omega)$, and assume that $\eta_0$ satisfies \eqref{z_avg} and $(\tilde{c}_0,\tilde{b}_0)$  satisfy \eqref{conserv_c}, where $M > \abs{\Sigma}_2 f(b_0)$ is the equilibrium mass associated to the equilibrium surfactant concentration $(c_0,b_0)$ given in \eqref{equilibrium_config}.   Further assume that the initial data satisfy the compatibility conditions of \eqref{compat_cond}. Let $T >0$. Then there exists a universal constant $\kappa >0$ such that if
\begin{equation}
 \Vert u_0 \Vert^2_{H^2(\Omega)} + \Vert \eta_0 \Vert^2_{H^3(\Sigma)} + \Vert \tilde{c}_0 - c_0 \Vert^2_{H^2(\Sigma)} + \Vert \tilde{b}_0 - b_0 \Vert^2_{H^2(\Omega)} \le \kappa,
\end{equation}
then there exists a unique (strong) solution $(u,p,\eta,c,b)$ to \eqref{perturbation} on the temporal interval $[0,T]$ satisfying the estimate
\begin{equation}\label{lwp_intro_0}
 \sup_{0\le t \le T} \en(t) + \int_0^T \di(t)dt   + \int_0^T \left[\Vert  \dt^2 c(t)\Vert^2_{H^{-1}(\Sigma)} + \Vert \dt^2 b(t) \Vert^2_{(H^1(\Omega)^\ast} \right]dt + \Vert \dt^{2} u \Vert^2_{(\mathcal{X}_T)^*} \ls \en(0).
\end{equation}
Moreover, $\eta$ is such that the mapping $\Phi(\cdot,t)$, defined by \eqref{phi_def}, is a $C^{1}$ diffeomorphism for each $t \in [0,T]$.
\end{theorem}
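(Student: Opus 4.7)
The plan is to follow the standard framework for free-boundary Navier--Stokes problems in horizontally periodic domains, developed in \cite{GT1,WTK,Wu} and adapted to the surfactant setting in \cite{Kim.Tice2016,bo_pr_si_1}. The argument has four stages: (i) construction of initial data for the time derivatives; (ii) solution of a linearized coupled system by Galerkin approximation; (iii) a contraction-mapping construction of the nonlinear solution; and (iv) verification of the regularity bound \eqref{lwp_intro_0} together with the diffeomorphism property of $\Phi(\cdot,t)$.

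To construct the time-derivative initial data, one first recovers $p(\cdot,0)$ by solving an elliptic Stokes-type system whose boundary conditions encode \eqref{compat_cond}, then reads off $\dt u(\cdot,0)$ from the momentum equation, $\dt \e(\cdot,0) = u_0\cdot \mathcal{N}_0$ from the kinematic boundary condition, $\dt c(\cdot,0)$ from the surface surfactant equation, and $\dt b(\cdot,0)$ from the bulk equation, each evaluated at $t=0$.  The regularity of the data combined with \eqref{compat_cond} places all of these in the spaces required by \eqref{def_E} at $t=0$ with norms controlled by the data.  For the linearized problem we expand \eqref{perturbation} about the equilibrium so that the Marangoni stress reads $-\sigma_0'\nabla_{\Gamma}c$ and the absorption boundary condition becomes the Robin coupling $\beta\nabla b\cdot\nu = \omega_{\c0}c+\omega_{\b0}b$ modulo quadratic terms.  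The $(u,p,\e)$ block is then a Stokes problem with dynamic free-boundary condition driven by a Marangoni forcing from $c$; it is solved by Galerkin approximation in the divergence-free space $\mathcal{X}_T$ of \eqref{X_space_def} together with the Korn and pressure estimates of \cite{GT1}.  The $(c,b)$ block is an advection-diffusion pair coupled through Robin conditions, solved by a separate Galerkin scheme yielding parabolic regularity of the required order, and the two blocks are coupled via Picard iteration.

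The nonlinear system is then solved by a Banach fixed-point argument on a small ball in the function space defined by the norms of \eqref{def_E}-\eqref{def_D}, with the map sending $(u,\e,c,b)$ to the solution of the linearized system whose right-hand sides come from evaluating the nonlinear terms at the input.  The key point is that every nonlinearity in \eqref{perturbation} is at least quadratic in the unknowns, so in the corresponding forcing norm it is controlled by the square of the ball radius, and the Lipschitz constants between two inputs have the same structure; this lets the threshold $\kappa$ be chosen to close the contraction.  The weak bound on $\dt^{2}u$ in $(\mathcal{X}_T)^*$ is obtained by dualizing the momentum equation against test vectors in $\mathcal{X}_T$ and applying the product and trace estimates; the analogous dual bounds on $\dt^{2}c$ and $\dt^{2}b$ follow from their evolution equations.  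Finally, the smallness of $\sup_t\tms{\e(t)}{3}$ together with $H^{3}(\Sigma)\hookrightarrow C^{1,\alpha}(\Sigma)$ keeps the Jacobian $J$ uniformly bounded away from zero, giving the $C^{1}$ diffeomorphism property.

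The principal obstacle is the coupling between the Stokes subsystem and the surfactant subsystem through the Marangoni stress and the absorption boundary condition.  The Marangoni term forces $c$ to be regular enough that $\nabla_\Gamma c$ can act as the tangential part of the boundary stress, which is precisely why $\en$ carries $c$ at the $H^{2}(\Sigma)$ level and $\di$ at the $H^{3}(\Sigma)$ level; the absorption coupling in turn requires the $(c,b)$ subsystem to be treated jointly rather than sequentially, because $c$ and $b$ each enter the other's boundary condition linearly.  A secondary issue is that the iterates must preserve, up to a controlled error, the mass constraint \eqref{mass}, which has to be checked so the fixed-point stays on the physically admissible configuration manifold.
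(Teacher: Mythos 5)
The paper does not actually prove Theorem \ref{lwp_intro}: it explicitly declines to do so ``in the interest of brevity,'' stating only that the result follows ``by modifying the known methods in straightforward ways'' and pointing to \cite{GT1,WTK,Wu} for energy-method constructions and to \cite{bo_pr_si_1} for the $L^p$ maximal-regularity route. Your outline is a faithful sketch of exactly the machinery those references use --- construction of the time-derivative data from \eqref{compat_cond}, Galerkin solution of a linearized system, an iteration to close the nonlinearity, and smallness of $\eta$ in $H^3(\Sigma)\hookrightarrow C^{1,\alpha}$ to keep $\Phi(\cdot,t)$ a diffeomorphism --- so in that sense you are taking the same approach the paper points to, just written out at roadmap level. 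Two caveats on where your sketch understates the real work. First, the coefficients $\mathcal{A}$, $J$, $K$ in \eqref{perturbation} depend on the unknown $\eta$ through the harmonic extension, so the ``linearized system'' one actually solves has time-dependent, $\eta$-dependent coefficients; in \cite{GT1} this is where most of the technical effort lies (solving the $\mathcal{A}$-Stokes problem with the weak formulation on $\mathcal{X}_T$, recovering the pressure, and iterating in the geometry), and a naive expansion about the flat equilibrium with constant coefficients would not directly produce a solution of \eqref{perturbation}. Second, the fixed-point map is generally not a contraction in the highest-order norms of \eqref{def_E}--\eqref{def_D}; the standard device is to propagate boundedness in the high norm while proving contraction in a strictly weaker norm, and then upgrade by interpolation or weak compactness. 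Neither point invalidates your plan --- they are precisely the ``known methods'' the paper defers to --- but a complete proof would have to address both. Your remark about the iterates preserving \eqref{mass} is not actually needed for the fixed point: the mass identity is a consequence of the equations and can be verified a posteriori for the solution.
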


\begin{remark}
All of the computations involved in the a priori estimates that we develop in this paper are justified by Theorem \ref{lwp_intro}.
\end{remark}

The local existence and a priori estimate then combine to yield the following global-in-time existence and decay result.

\begin{theorem}\label{gwp_intro}
Let $u_0 \in H^2(\Omega)$,   $\eta_0 \in H^3(\Sigma)$, $\tilde{c}_0 \in H^2(\Sigma)$, $\tilde{b}_0 \in H^2(\Omega)$, and assume that $\eta_0$ satisfies \eqref{z_avg} and $(\tilde{c}_0,\tilde{b}_0)$  satisfy \eqref{conserv_c}, where $M > \abs{\Sigma}_2 f(b_0)$ is the equilibrium mass associated to the equilibrium surfactant concentration $(c_0,b_0)$ given in \eqref{equilibrium_config}.   Further assume that the initial data satisfy the compatibility conditions of \eqref{compat_cond}.  Then there exists a universal $\kappa>0$ such that if
\begin{eqnarray}
\tm{u_0}{2}+\tms{\e_0}{3}+\tm{\tilde\b-\b_0}{2}+\tms{\tilde\c-\c_0}{2}\leq\kappa,
\end{eqnarray}
then there exists a unique solution $(u,p,\e,\c,\b)$ to \eqref{perturbation} in the global temporal interval $[0,\infty)$ and a universal constant $\lambda >0$ such that
\begin{eqnarray}
\sup_{t\in[0,\infty)}\ue^{\lambda t}\en(t)+\int_0^{\infty}\di(t)\ud{t}\ls\en(0).
\end{eqnarray}
Moreover, $\e$ is such that the mapping $\Phi$ defined by \eqref{phi_def} is a $C^1$ diffeomorphism.
\end{theorem}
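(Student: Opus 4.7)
The plan is a standard continuation argument that couples the local existence result (Theorem \ref{lwp_intro}) with the a priori estimate (Theorem \ref{aprioris_intro}). Let $\delta$ and $C_1$ denote the smallness parameter and the implicit constant in the conclusion of Theorem \ref{aprioris_intro}, and let $\kappa_{\text{loc}}$ denote the smallness parameter in Theorem \ref{lwp_intro}. I would choose $\kappa > 0$ small enough so that the hypothesis $\tm{u_0}{2}+\tms{\e_0}{3}+\tm{\tilde\b-\b_0}{2}+\tms{\tilde\c-\c_0}{2}\le\kappa$ forces both $\en(0) \le \min(\delta,\kappa_{\text{loc}}^2)/(2C_1)$ and $\en(0) \le \kappa_{\text{loc}}^2$.

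Define
\begin{equation}
T^\ast = \sup\Big\{T \ge 0 : \text{a solution to \eqref{perturbation} exists on }[0,T]\text{ with } \sup_{[0,T]}\en \le \delta \text{ and } \int_0^T \di\,dt < \infty\Big\}.
\end{equation}
By Theorem \ref{lwp_intro} we have $T^\ast > 0$. For any $T < T^\ast$, Theorem \ref{aprioris_intro} applies and yields
\begin{equation}
\sup_{0 \le t \le T} e^{\lambda t}\en(t) + \int_0^T \di(t)\,dt \le C_1 \en(0) \le \tfrac{1}{2}\min(\delta,\kappa_{\text{loc}}^2),
\end{equation}
uniformly in $T < T^\ast$. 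If $T^\ast < \infty$, I would pick some $t_0 \in (0,T^\ast)$ with $T^\ast - t_0$ smaller than the local existence time associated to data of size $\kappa_{\text{loc}}$, and restart Theorem \ref{lwp_intro} at time $t_0$: the state $(u,\eta,\tilde c,\tilde b)(t_0)$ lies in the correct spaces with norm at most $\sqrt{\en(t_0)} \le \kappa_{\text{loc}}$, satisfies \eqref{z_avg} and \eqref{conserv_c} by virtue of the conservation identities \eqref{mass}, and satisfies the compatibility conditions \eqref{compat_cond} because the boundary conditions in \eqref{perturbation} hold pointwise in time for strong solutions. The restarted solution then extends the original past $T^\ast$ with energy still below $\delta$, contradicting the definition of $T^\ast$. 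Hence $T^\ast = \infty$. The decay estimate then follows by sending $T \to \infty$ in the a priori bound, and uniqueness transfers from the local uniqueness in Theorem \ref{lwp_intro} by a standard patching argument.

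Finally, the $C^1$-diffeomorphism claim on $\Phi$ follows because smallness of $\en(t)$ controls $\e(\cdot,t)$ in $H^3(\Sigma)$, hence $\bar\e(\cdot,t)$ in $H^{7/2}(\Omega)$ through the Poisson extension bounds, which Sobolev-embeds into $C^1$, giving $J = 1 + \bar\e/L_3 + \tilde L\,\p_3\bar\e$ uniformly bounded away from zero.

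\textbf{Main obstacle.} The key subtlety is the ``restart'' step: I must verify that the state at $t_0$ genuinely meets the hypotheses of Theorem \ref{lwp_intro}. Smallness of the data in the right norms is immediate from $\en(t_0) \le \delta/2$, and the integral constraints \eqref{z_avg}, \eqref{conserv_c} are automatic from \eqref{mass}; what requires care is the preservation of the compatibility conditions \eqref{compat_cond} in time. This follows from the fact that $\Pi_t(\dl u\,\n) - \sqrt{1+|\na_\ast\e|^2}\sigma'(\tilde c)\na_\Gamma \tilde c$ is (up to lower-order terms in the momentum equation) a boundary trace that vanishes on $\Sigma$ for all $t > 0$ by the third equation of \eqref{perturbation}, together with $\nl\cdot u = 0$ and $u|_{\Sigma_b} = 0$ holding pointwise in $t$ for strong solutions.
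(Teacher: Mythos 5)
Your proposal is correct and is essentially the argument the paper intends: the paper's proof of Theorem \ref{gwp_intro} simply invokes Theorems \ref{aprioris_intro} and \ref{lwp_intro} ``in a standard way'' (citing the proof of Theorem 3 in \cite{Kim.Tice2016}), and your continuation/bootstrap scheme --- with the restart step justified by the propagation of \eqref{z_avg}, \eqref{conserv_c}, and the compatibility conditions \eqref{compat_cond} along strong solutions --- is precisely that standard argument made explicit.
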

\begin{proof}
This follows from Theorems \ref{aprioris_intro} and \ref{lwp_intro} in a standard way.  See the proof of \cite[Theorem 3]{Kim.Tice2016} for details.
\end{proof}

\begin{remark}
 Theorem \ref{gwp_intro} can be interpreted as an asymptotic stability result: the equilibria $u=0$, $p=0$, $\eta=0$, $\tilde{c}=c_0$ and $\tilde{b} = b_0$ are asymptotically stable, and solutions return to equilibrium exponentially fast.
\end{remark}

\begin{remark}
The surface function $\eta$ is sufficiently small to guarantee that the mapping $\Phi(\cdot,t)$, defined in \eqref{phi_def}, is a diffeomorphism for each $t\ge 0$.  As such, we may change coordinates to $y \in \Omega(t)$ to produce a global-in-time, decaying solution to \eqref{surf}.
\end{remark}

\subsection{Plan of paper and summary of arguments}

Define the horizontal energy
\begin{eqnarray}
\ben&=&\sum_{\abs{\al}\leq 2}\bigg(\half\int_{\Omega}\abs{\p^{\al}u}^2+\half\int_{\Sigma}\abs{\p^{\al}\e}^2
+\frac{\sigma_0}{2}\int_{\Sigma}\abs{\na_{\y}\p^{\al}\e}^2
+\frac{-\sigma_0'f'(\b_0)}{2\c_0}\int_{\Omega}\abs{\p^{\al}\b}^2+\frac{-\sigma_0'}{2\c_0}\int_{\Sigma}\abs{\p^{\al}\c}^2\bigg),
\end{eqnarray}
and the horizontal dissipation
\begin{eqnarray}
\\
\bdi&=&\sum_{\abs{\al}\leq 2}\bigg(\half\int_{\Omega}\abs{\dd\p^{\al}u}^2+\frac{-\beta\sigma'_0f'(\b_0)}{\c_0}\int_{\Omega}\abs{\na \p^{\al}\b}^2+\frac{-\gamma\sigma'_0}{\c_0}\int_{\Sigma}\abs{\na_{\y}\p^{\al}\c}^2+\frac{-\beta\sigma'_0}{\c_0}\int_{\Sigma}\Big(\p^{\al}\c-f'(\b_0)\p^{\al}\b\Big)^2\bigg),\no
\end{eqnarray}
where $\al=(\al_0;\al_1,\al_2,\al_3)$ with $\al_3=0$ is the temporal and spatial derivatives index with $\abs{\al}=2\al_0+\al_1+\al_2+\al_3$.

The global well-posedness and exponential decay follow from a delicate nonlinear energy method. As in \cite{GT2,Kim.Tice2016,WTK,Wu}, we utilize energy and dissipation with higher-order derivatives. This includes the following three steps:
\begin{enumerate}
\item
Horizontal energy-dissipation estimates:  We apply horizontal derivatives (i.e. $\partial_1$, $\partial_2$, and $\partial_t$) to the equations and employ a variant of the basic energy-dissipation equality \eqref{basic_ed} to get estimates.  The horizontal derivatives are needed because they are the only ones compatible with the boundary conditions.  Since the derivatives do not commute with the operators in \eqref{perturbation} we arrive at an equality of the form
\begin{equation}
\dfrac{\ud{\ben}}{\ud{t}}+\bdi = \mathcal{I}
\end{equation}
for a nonlinear interaction term $\mathcal{I}$.
\item
Nonlinear estimates: Here we seek to bound $\mathcal{I}$ by the full energy and dissipation $\en$ and $\di$ in the form  $\abs{\mathcal{I}}\ls \sqrt{\en}\di$, which means $\mathcal{I}$ can be absorbed into $\di$ on the left side of the energy-dissipation equality for small $\en$.  This particular structure is essential for the absorbing argument.
\item
Comparison estimates:  We utilize the horizontal energy and dissipation to control their ``full'' counterparts defined in \eqref{def_E} and \eqref{def_D}.  Up to error terms we show that
\begin{equation}
\en\ls\ben \text{ and }\di\ls\bdi,
\end{equation}
which can then be used to get bounds and decay estimates for the full energy and dissipation.
\end{enumerate}

We follow this general outline here as well, but there are a couple interesting new features in the analysis.

{\bf Averaging estimate of $\c$ and $\b$.}  In order to prove exponential decay we must first prove the coercivity estimate  $\en\ls\di$.  The techniques for estimating the non-surfactant terms here can be found in \cite{Kim.Tice2016}, so it is the pair of surfactant terms that are of primary interest.  Roughly speaking, we need to bound $\tms{\c}{0}$ and $\tm{\b}{0}$ with $\tms{\na_{\y}\c}{0}$ and $\tm{\na\b}{0}$. When the masses of $\tilde\c$ and $\tilde\b$ are conserved separately in the dynamics, this estimate may be proved via the Poincar\'e-Wirtinger inequality and estimates of the averages.  However, the absorption and desorption of surfactant between surface and bulk yield non-constant averages of $\c$ and $\b$.  To get around this problem we introduce an enhanced Poincar\'e-type inequality, which combines the bound of $\c$ and $\b$ in one step, i.e. for $C\in H^1(\Sigma)$ and $B\in H^1(\Omega)$ satisfying
\begin{eqnarray}
\int_{\Sigma}C+\int_{\Omega}B=0,
\end{eqnarray}
we prove
\begin{eqnarray}
\tms{C}{0}+\tm{B}{0}\ls \tms{\na_{\y} C}{0}+\tm{\na B}{0}+\tms{C-f'(\b_0)B}{0},
\end{eqnarray}
where $C$ and $B$ can be variants of $\c$ and $\b$.  We thus see an interesting feature of the dynamics: the conservation of the sum of two quantities combines with the linearization of the flux function $\omega$ to provide a coercivity estimate for both surfactant phases at once.

{\bf Nonlinear estimate of $\omega$.} The linearization \eqref{linearize} of $\omega$ plays a key role in the derivation of the full energy-dissipation structure and the estimate of higher-order derivatives. As Lemma \ref{perturb 01} and \ref{perturb 02} reveal, the quantity $\c-f'(\b_0)\b$ creates a non-negative bound for the interaction between $\c$ and $\b$ and helps also in the above averaging estimate.  Furthermore, the integral form of $\omega(\c_0+\c,\b_0+\b)-\omega(\c_0,\b_0)-\omega_{\c0}\c-\omega_{\b0}\b$ leads to small constants which assist in the absorbing of nonlinear term $\mathcal{I}$.

\subsection{Definitions and terminology}\label{def_and_term}

We now mention some of the definitions, bits of notation, and conventions that we will use throughout the paper.

\textbf{  Einstein summation and constants: }  We will employ the Einstein convention of summing over  repeated indices for vector and tensor operations.  Throughout the paper $C>0$ will denote a generic constant that can depend on the parameters of the problem, and $\Omega$, but does not depend on the data, etc.  We refer to such constants as ``universal.''  They are allowed to change from one inequality to the next.  We will employ the notation $a \ls b$ to mean that $a \le C b$ for a universal constant $C>0$.

\textbf{  Norms: } We write $H^k(\Omega)$ with $k\ge 0$ and  $H^s(\Sigma)$ with $s \in \mathbb{R}$ for the usual Sobolev spaces.  We will typically write $H^0 = L^2$.  To avoid notational clutter, we will avoid writing $H^k(\Omega)$ or $H^k(\Sigma)$ in our norms and typically write only $\Vert \cdot \Vert_{k}$ for $H^k(\Omega)$ norms and $\Vert\cdot\Vert_{\Sigma,s}$ for $H^s(\Sigma)$ norms.

\section{Perturbation form}

In this section, we introduce two types of formulation for system \eqref{perturbation} which will assist the estimate of temporal and horizontal derivatives.

\subsection{Geometric perturbed form}

We consider the system for $(v,q,\xi,h,d)$ as
\begin{equation}\label{perturb equation 1}
\left\{
\begin{array}{ll}
\dt v-\dt\be\tilde LK\p_3 v+u\cdot\nl v+\nl\cdot\sl(v,q)=F^1&\ \ \text{in}\ \ \Omega,\\
\nl\cdot v=F^2 &\ \ \text{in}\ \ \Omega,\\
\s(v,q)\n=\xi\n-\sigma_0\de_{\y}\xi\n-\sigma'_0\na_{\y}h+F^3&\ \ \text{on}\ \ \Sigma,\\
v=0&\ \ \text{on}\ \ \Sigma_b,\\
\dt\xi=v\cdot\n+F^4 &\ \ \text{on}\ \ \Sigma,\\
\ \\
\dt h+\c_0\na_{\y}\cdot v=\gamma\de_{\y}h-\beta\nl d\cdot\n+F^5 &\ \ \text{on}\ \ \Sigma,\\
\dt d-\dt\be\tilde LK\p_3 d+u\cdot\nl d=\beta\ll d+F^6&\ \ \text{in}\ \ \Omega,\\
\nl d\cdot\n=\omega_{\c}h+\omega_{\b0}d+F^7&\ \ \text{on}\ \ \Sigma,\\
\p_3d=0&\ \ \text{on}\ \ \Sigma_b.
\end{array}
\right.
\end{equation}

\begin{lemma}\label{perturb 01}
We have
\begin{eqnarray}
&&\frac{\ud{}}{\ud{t}}\bigg(\half\int_{\Omega}J\abs{v}^2+\half\int_{\Sigma}\abs{\xi}^2
+\frac{\sigma_0}{2}\int_{\Sigma}\abs{\na_{\y}\xi}^2+\frac{-\sigma_0'f'(\b_0)}{2\c_0}\int_{\Omega}J\abs{d}^2+\frac{-\sigma_0'}{2\c_0}\int_{\Sigma}\abs{h}^2\bigg)\\
&&+\bigg(\half\int_{\Omega}J\abs{\dl v}^2+\frac{-\beta\sigma'_0f'(\b_0)}{\c_0}\int_{\Omega}J\abs{\nl d}^2+\frac{-\gamma\sigma'_0}{\c_0}\int_{\Sigma}\abs{\na_{\y}h}^2\bigg)\no\\
&&+\frac{-\beta\sigma'_0}{\c_0}\int_{\Sigma}\Big(h-f'(\b_0)d\Big)(\omega_{\c0}h+\omega_{\b0}d)\no\\
&=&\int_{\Omega}Jv\cdot F^1+\int_{\Omega}JqF^2-\int_{\Sigma}v\cdot F^3+\int_{\Sigma}(\xi-\sigma_0\de_{\y}\xi)F^4\no\\
&&+\frac{-\sigma'_0}{\c_0}\int_{\Sigma}hF^5+\frac{-\sigma'_0f'(\b_0)}{\c_0}\int_{\Omega}JdF^6
+\frac{\beta\sigma'_0}{\c_0}\int_{\Sigma}\Big(h-f'(\b_0)d\Big)F^7.\no
\end{eqnarray}
\end{lemma}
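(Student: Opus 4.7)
The plan is to obtain the claimed identity by testing each equation in \eqref{perturb equation 1} against a carefully chosen multiplier, performing the appropriate integration by parts, and summing. The multipliers are: $v$ (against the measure $J\,dx$) for the momentum equation; the geometric boundary combination $\xi - \sigma_0 \de_{\y} \xi$, used implicitly via the stress boundary term; $\frac{-\sigma'_0}{\c_0} h$ for the surface-surfactant equation; and $\frac{-\sigma'_0 f'(\b_0)}{\c_0} d$ (again against $J\,dx$) for the bulk-surfactant equation. These specific weights are dictated by two design requirements: (a) the Marangoni contribution of the fluid must exactly cancel the $\c_0 \na_{\y} \cdot v$ contribution of the $h$-equation, and (b) the two boundary flux contributions from the $h$- and $d$-equations must recombine, via $\omega_{\b0} = -\omega_{\c0} f'(\b_0)$, into the single coupling $(h - f'(\b_0) d)(\omega_{\c0} h + \omega_{\b0} d)$.

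First I would treat the momentum and kinematic equations. The geometric transport identity (Lemma \ref{app lemma 2}), which requires only $\nl \cdot u = 0$ for the reference velocity $u$, collapses the $\dt v$, $\dt\be$, and $u \cdot \nl v$ contributions tested against $Jv$ into $\frac{d}{dt}\frac{1}{2}\int_\Omega J |v|^2$. A $J$-weighted integration by parts on $\int_\Omega J v \cdot (\nl \cdot \sl(v,q))$, using the Piola identity and the symmetry of $\dl v$, produces the viscous dissipation $\frac{1}{2}\int_\Omega J |\dl v|^2$, the pressure remainder $-\int_\Omega J q F^2$, and the surface integral $\int_\Sigma v \cdot (\sl(v,q)\n)$. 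Inserting the stress boundary condition splits the surface piece into a geometric contribution $\int_\Sigma (v \cdot \n)(\xi - \sigma_0 \de_{\y} \xi)$, a Marangoni contribution $-\sigma'_0 \int_\Sigma v \cdot \na_{\y} h = \sigma'_0 \int_\Sigma h \na_{\y} \cdot v$ (after tangential IBP), and the $F^3$ remainder. The kinematic relation $v \cdot \n = \dt \xi - F^4$ together with one more tangential IBP converts the geometric piece into $\frac{d}{dt}\bigl[\frac{1}{2}\int_\Sigma \xi^2 + \frac{\sigma_0}{2}\int_\Sigma |\na_{\y} \xi|^2\bigr]$ up to an $F^4$ remainder. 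Next, testing the $h$-equation with $\frac{-\sigma'_0}{\c_0} h$ and using $\int_\Sigma h \de_{\y} h = -\int_\Sigma |\na_{\y} h|^2$ yields the evolution of $\frac{-\sigma'_0}{2\c_0}\int_\Sigma h^2$, the dissipation $\frac{-\gamma \sigma'_0}{\c_0}\int_\Sigma |\na_{\y} h|^2$, the term $-\sigma'_0 \int_\Sigma h \na_{\y} \cdot v$ that cancels the Marangoni piece identically, and a surface flux $\frac{\beta \sigma'_0}{\c_0}\int_\Sigma h (\nl d \cdot \n)$ to be resolved in the last step.

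Analogously, testing the $d$-equation with $\frac{-\sigma'_0 f'(\b_0)}{\c_0} d$ against $J\,dx$ and again invoking the transport identity produces $\frac{d}{dt}\frac{-\sigma'_0 f'(\b_0)}{2\c_0}\int_\Omega J d^2$; a $J$-weighted IBP on the $\ll d$-term (with vanishing contribution on $\Sigma_b$ because $\p_3 d = 0$ and $\a$ degenerates correctly there) gives the bulk dissipation $\frac{-\beta \sigma'_0 f'(\b_0)}{\c_0}\int_\Omega J |\nl d|^2$ and the boundary flux $\frac{-\beta \sigma'_0 f'(\b_0)}{\c_0}\int_\Sigma d (\nl d \cdot \n)$. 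Substituting the Robin boundary condition $\nl d \cdot \n = \omega_{\c0} h + \omega_{\b0} d + F^7$ into the flux terms from the $h$- and $d$-tests and summing all three energy identities, the $\int_\Sigma h \na_{\y} \cdot v$ pieces cancel exactly, the two boundary interaction terms assemble into $\frac{-\beta \sigma'_0}{\c_0}\int_\Sigma (h - f'(\b_0) d)(\omega_{\c0} h + \omega_{\b0} d)$ on the left, and the matched $F^7$ contributions group into $\frac{\beta \sigma'_0}{\c_0}\int_\Sigma (h - f'(\b_0) d) F^7$ on the right. The main obstacle is careful bookkeeping: tracking the many minus signs coming from $\sigma'_0 < 0$ and the various IBPs, confirming that the transport identity applies even though $\nl \cdot v = F^2 \ne 0$ (which it does, since only $\nl \cdot u = 0$ is used), and verifying that the Marangoni and $\na_{\y} \cdot v$ contributions pair with the correct opposite signs so that they cancel identically rather than merely to leading order.
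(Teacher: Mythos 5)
Your proposal is correct and follows essentially the same route as the paper: test the momentum equation with $Jv$, use $\nl\cdot u=0$ and the kinematic equation to collapse the transport terms into $\frac{d}{dt}\frac12\int_\Omega J|v|^2$, insert the stress and kinematic boundary conditions, test the $h$- and $d$-equations with $\frac{-\sigma'_0}{\c_0}h$ and $\frac{-\sigma'_0 f'(\b_0)}{\c_0}Jd$, cancel the Marangoni term against the $\c_0\na_{\y}\cdot v$ term, and recombine the two boundary fluxes through the Robin condition. The only cosmetic difference is where you perform the tangential integration by parts for the Marangoni cancellation, and note that the grouping into $(h-f'(\b_0)d)(\omega_{\c0}h+\omega_{\b0}d)$ is purely algebraic from the choice of multipliers — the relation $\omega_{\b0}=-\omega_{\c0}f'(\b_0)$ is only needed later for the sign of that term.
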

\begin{proof}
Multiplying $Jv$ on both sides of the Navier-Stokes equation and integrating over $\Omega$, we have
\begin{eqnarray}
\int_{\Omega}Jv\cdot\dt v+\int_{\Omega}Jv\cdot\Big(-\dt\be\tilde LK\p_3 v+u\cdot\nl v\Big)+\int_{\Omega}Jv\cdot\Big(\nl\cdot\sl(v,q)\Big)=\int_{\Omega}Jv\cdot F^1.
\end{eqnarray}
We may directly compute
\begin{eqnarray}
&&\frac{\ud{}}{\ud{t}}\bigg(\int_{\Omega}Jv\cdot\dt v\bigg)+\int_{\Omega}Jv\cdot\Big(-\dt\be\tilde LK\p_3 v+u\cdot\nl v\Big)\\
&=&\frac{\ud{}}{\ud{t}}\bigg(\half\int_{\Omega}J\abs{v}^2\bigg)-\half\int_{\Omega}\dt J\abs{v}^2+\half\int_{\Omega}\p_3(\dt\be\tilde L)\abs{v}^2-\int_{\Sigma}\dt\e\abs{v^2}\no\\
&&-\int_{\Omega}(\nl\cdot u)\abs{v^2}+\int_{\Sigma}(u\cdot\n)\abs{v^2}\no\\
&=&\frac{\ud{}}{\ud{t}}\bigg(\half\int_{\Omega}J\abs{v}^2\bigg).\no
\end{eqnarray}
Also, we know
\begin{eqnarray}
&&\int_{\Omega}Jv\cdot\Big(\nl\cdot\sl(v,q)\Big)\\
&=&-\int_{\Omega}Jq(\nl\cdot v)+\half\int_{\Omega}J\abs{\dl v}^2+\int_{\Sigma}\Big(\s(v,q)\n\Big)\cdot v\no\\
&=&-\int_{\Omega}JqF^2+\half\int_{\Omega}J\abs{\dl v}^2+\int_{\Sigma}\Big(\xi\n-\sigma_0\de_{\y}\xi\n\Big)\cdot v-\sigma'_0\int_{\Sigma}\na_{\y}h\cdot v+\int_{\Sigma}F^3\cdot v.\no
\end{eqnarray}
We may continue computing
\begin{eqnarray}
\int_{\Sigma}\Big(\xi\n-\sigma_0\de_{\y}\n\Big)\cdot v&=&\int_{\Sigma}(\xi-\sigma_0\de_{\y}\xi)(\dt\xi-F^4)\\
&=&\frac{\ud{}}{\ud{t}}\bigg(\half\int_{\Sigma}\abs{\xi}^2
+\frac{\sigma_0}{2}\int_{\Sigma}\abs{\na_{\y}\xi}^2\bigg)+\int_{\Sigma}(\xi-\sigma_0\de_{\y}\xi)F^4.\no
\end{eqnarray}
In surface surfactant equation, multiplying $\dfrac{-\sigma_0'}{\c_0}h$ on both sides and integrating over $\Sigma$, we have
\begin{eqnarray}
\frac{\ud{}}{\ud{t}}\bigg(\frac{-\sigma_0'}{2\c_0}\int_{\Sigma}\abs{h}^2\bigg)+\sigma'_0\int_{\Sigma}\na_{\y}h\cdot v+\frac{-\gamma\sigma'_0}{\c_0}\int_{\Sigma}\abs{\na_{\y}h}^2+\frac{-\beta\sigma'_0}{\c_0}\int_{\Sigma}h(\nl d\cdot\n)=\frac{-\sigma'_0}{\c_0}\int_{\Sigma}hF^5.
\end{eqnarray}
In surface surfactant equation, multiplying $\dfrac{-\beta\sigma'_0f'(\b_0)}{\c_0}Jd$ on both sides and integrating over $\Omega$, we have
\begin{eqnarray}
\frac{\ud{}}{\ud{t}}\bigg(\frac{-\sigma_0'f'(\b_0)}{2\c_0}\int_{\Omega}J\abs{d}^2\bigg)+\frac{-\beta\sigma'_0f'(\b_0)}{\c_0}\int_{\Omega}J\abs{\nl d}^2-
\frac{-\beta\sigma'_0}{\c_0}\int_{\Sigma}f'(\b_0)d(\nl d\cdot\n)=\int_{\Omega}JdF^6.
\end{eqnarray}
Note that
\begin{eqnarray}
\int_{\Sigma}\Big(h-f'(\b_0)d\Big)(\nl d\cdot\n)=\int_{\Sigma}\Big(h-f'(\b_0)d\Big)(\omega_{\c}h+\omega_{\b0}d)
+\int_{\Sigma}\Big(h-f'(\b_0)d\Big)F^7.
\end{eqnarray}
Summarizing all above, we have the desired result.
\end{proof}

We will mainly use this form to study the estimate of temporal derivative. We apply $\dt$ operator on the unknown variables and let $(v,q,\xi,h,d)=(\dt u,\dt p,\dt\e,\dt\c,\dt\b)$. In this situation, we have
\begin{eqnarray}
F^1&=&F^{1,1}+F^{1,2}+F^{1,3}+F^{1,4},\\
F^{1,1}_i&=&\dt(\dt\be\tilde LK)\p_3u_i,\\
F^{1,2}_i&=&-\dt(u_j\a_{jk})\p_ku_i+\dt\a_{ik}\p_kp,\\
F^{1,3}_i&=&\dt\a_{jl}\p_l(\a_{im}\p_mu_j+\a_{jm}\p_mu_i),\\
F^{1,4}_i&=&\a_{jk}\p_k(\dt\a_{il}\p_lu_j+\dt\a_{jl}\p_lu_i),
\end{eqnarray}
\begin{eqnarray}
F^2&=&-\dt\a_{ij}\p_ju_i,
\end{eqnarray}
\begin{eqnarray}
F^3&=&F^{3,1}+F^{3,2}+F^{3,3},\\
F^{3,1}_i&=&(\e-p)\dt\n_i+(\a_{ik}\p_ku_j+\a_{jk}\p_ku_i)\dt\n_j+(\dt\a_{ik}\p_ku_j+\dt\a_{jk}\p_ku_i)\n_j,\\
F^{3,2}_i&=&-\sigma'(\tilde\c)\dt\c H\n_i-(\sigma(\tilde\c)-\sigma_0)\dt H\n_i-(\sigma_0\dt H-\sigma_0\dt\de_{\y}\e)\n_i-\sigma(\tilde\c)H\dt\n_i,\\
F^{3,3}_i&=&-\frac{\na_{\y}\e\cdot\na_{\y}\dt\e}{\sqrt{1+\abs{\na_{\y}\e}^2}}\sigma'(\tilde\c)-\sqrt{1+\abs{\na_{\y}\e}^2}\sigma''(\tilde\c)\dt\tilde\c(\na_{\Gamma}\tilde\c)_i
+\sqrt{1+\abs{\na_{\y}\e}^2}\sigma'(\tilde\c)\nu_i(\nu_{\y}\cdot\na_{\y})\dt\c\\
&&-\sqrt{1+\abs{\na_{\y}\e}^2}\sigma'(\tilde\c)\Big(\dt\nu_i(\nu_{\y}\cdot\na_{\y})\tilde\c+\nu_i(\dt\nu_{\y}\cdot\na_{\y})\tilde\c\Big),\no
\end{eqnarray}
\begin{eqnarray}
F^4&=&\dt\na_{\y}\e\cdot u,
\end{eqnarray}
\begin{eqnarray}
F^5&=&F^{5,1}+F^{5,2},\\
F^{5,1}&=&\dt\Big(-u\cdot\na_{\y}\c-\c\na_{\Gamma}\cdot u+\gamma(\de_{\Gamma}\c-\de_{\y}\c)-\c_0(\na_{\Gamma}\cdot u-\na_{\y}\cdot u)\Big),\\
F^{5,2}&=&-\beta(\dt\a_{ik})\p_k\b\n_i-\beta\a_{ik}\p_k\b(\dt\n_i),
\end{eqnarray}
\begin{eqnarray}
F^6&=&F^{6,1}+F^{6,2}+F^{6,3},\\
F^{6,1}_i&=&\dt(\dt\be\tilde LK)\p_3\b,\\
F^{6,2}_i&=&-\dt(u_j\a_{jk})\p_k\b,\\
F^{6,3}_i&=&\beta\dt\a_{kl}\p_k\a_{jm}\p_m\b+\beta\a_{jk}\p_k\dt\a_{ml}\p_m\b,
\end{eqnarray}
\begin{eqnarray}
F^7&=&F^{7,1}+F^{7,2},\\
F^{7,1}&=&-(\dt\a_{ik})\p_k\b\n_i-\a_{ik}\p_k\b(\dt\n_i),\\
F^{7,2}&=&\dt\Big(\omega(\tilde\c,\tilde\b)-\omega_{\c0}\c-\omega_{\b0}\b\Big).
\end{eqnarray}

\subsection{Linear perturbed form}

We consider the system for $(v,q,\xi,h,d)$ as
\begin{equation}\label{perturb equation 2}
\left\{
\begin{array}{ll}
\dt v-\de v+\na q=Z^1&\ \ \text{in}\ \ \Omega,\\
\na\cdot v=Z^2 &\ \ \text{in}\ \ \Omega,\\
(qI-\dd v)e_3=\xi e_3-\sigma_0\de_{\y}\xi e_3-\sigma'_0\na_{\y}h+Z^3&\ \ \text{on}\ \ \Sigma,\\
v=0&\ \ \text{on}\ \ \Sigma_b,\\
\dt\xi=v_3+Z^4 &\ \ \text{on}\ \ \Sigma,\\
\ \\
\dt h+\c_0\na_{\y}\cdot v=\gamma\de_{\y}h-\beta\p_3d+Z^5 &\ \ \text{on}\ \ \Sigma,\\
\dt d=\beta\de d+Z^6&\ \ \text{in}\ \ \Omega,\\
\p_3d=\omega_{\c0}h+\omega_{\b0}d+Z^7&\ \ \text{on}\ \ \Sigma,\\
\p_3d=0&\ \ \text{on}\ \ \Sigma_b.
\end{array}
\right.
\end{equation}

\begin{lemma}\label{perturb 02}
We have
\begin{eqnarray}
&&\frac{\ud{}}{\ud{t}}\bigg(\half\int_{\Omega}\abs{v}^2+\half\int_{\Sigma}\abs{\xi}^2
+\frac{\sigma_0}{2}\int_{\Sigma}\abs{\na_{\y}\xi}^2+\frac{-\sigma_0'f'(\b_0)}{2\c_0}\int_{\Omega}\abs{d}^2+\frac{-\sigma_0'}{2\c_0}\int_{\Sigma}\abs{h}^2\bigg)\\
&&+\bigg(\half\int_{\Omega}\abs{\dd v}^2+\frac{-\beta\sigma'_0f'(\b_0)}{\c_0}\int_{\Omega}\abs{\na d}^2+\frac{-\gamma\sigma'_0}{\c_0}\int_{\Sigma}\abs{\na_{\y}h}^2\bigg)
+\frac{-\beta\sigma'_0}{\c_0}\int_{\Sigma}\Big(h-f'(\b_0)d\Big)(\omega_{\c0}h+\omega_{\b0}d)\no\\
&=&\int_{\Omega}v\cdot Z^1+\int_{\Omega}(qZ^2-v\cdot\na Z^2)-\int_{\Sigma}v\cdot Z^3+\int_{\Sigma}(\xi-\sigma_0\de_{\y}\xi)Z^4\no\\
&&+\frac{-\sigma'_0}{\c_0}\int_{\Sigma}hZ^5+\frac{-\sigma'_0f'(\b_0)}{\c_0}\int_{\Omega}dZ^6
+\frac{\beta\sigma'_0}{\c_0}\int_{\Sigma}\Big(h-f'(\b_0)d\Big)Z^7.\no
\end{eqnarray}
\end{lemma}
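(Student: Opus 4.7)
The plan is to mirror the proof of Lemma \ref{perturb 01}, but in the flat geometry of \eqref{perturb equation 2} where $\a = I$, $J = 1$, all transport-in-time corrections $\dt\be\tilde{L}K\p_3$ disappear, and the outward unit normal on $\Sigma$ is the constant vector $e_3$.  I would test each of the five evolution equations against an appropriately chosen multiplier, integrate over the associated domain, integrate by parts to extract the energy and dissipation quadratic forms, then sum the resulting five identities.  The boundary cross-terms coming from the stress condition, the surface-surfactant convection, and the Neumann condition for $d$ will cancel or combine to produce the right-hand side.

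First I would handle the Navier--Stokes momentum equation.  Since $v$ fails to be divergence free (we have $\na\cdot v = Z^2$), I would write
\[
-\de v + \na q = \na\cdot(qI - \dd v) + \na Z^2,
\]
using $\na\cdot\dd v = \de v + \na(\na\cdot v)$.  Testing against $v$ on $\Omega$ and integrating by parts, invoking $v=0$ on $\Sigma_b$, yields
\begin{eqnarray*}
&&\frac{\ud{}}{\ud{t}}\half\int_{\Omega}\abs{v}^2 + \half\int_{\Omega}\abs{\dd v}^2 + \int_{\Sigma}v\cdot\Big((qI-\dd v)e_3\Big) \\
&&\qquad = \int_{\Omega}v\cdot Z^1 + \int_{\Omega}qZ^2 - \int_{\Omega}v\cdot\na Z^2.
\end{eqnarray*}
Substituting the stress boundary condition $(qI-\dd v)e_3 = \xi e_3 - \sigma_0\de_{\y}\xi\,e_3 - \sigma'_0\na_{\y}h + Z^3$ splits the boundary integral into a piece proportional to $v_3$ and a piece proportional to the horizontal part of $v$.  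Using the kinematic condition $v_3 = \dt\xi - Z^4$ and integrating by parts on $\Sigma$ (which has empty boundary, being a torus) converts the $v_3$ piece into
\[
\frac{\ud{}}{\ud{t}}\bigg(\half\int_{\Sigma}\abs{\xi}^2 + \frac{\sigma_0}{2}\int_{\Sigma}\abs{\na_{\y}\xi}^2\bigg) - \int_{\Sigma}(\xi-\sigma_0\de_{\y}\xi)Z^4,
\]
while the horizontal piece leaves behind $-\sigma'_0\int_{\Sigma}\na_{\y}h\cdot v$, to be cancelled in the next step.

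Next I would test the surface surfactant equation against $\frac{-\sigma'_0}{\c_0}h$ on $\Sigma$.  Integration by parts on the closed surface produces the time derivative of $\frac{-\sigma'_0}{2\c_0}\int_{\Sigma}\abs{h}^2$, the dissipation $\frac{-\gamma\sigma'_0}{\c_0}\int_{\Sigma}\abs{\na_{\y}h}^2$, the cross-term $+\sigma'_0\int_{\Sigma}\na_{\y}h\cdot v$ which exactly cancels the Marangoni residual from the momentum step, and the bulk-to-surface coupling $\frac{-\beta\sigma'_0}{\c_0}\int_{\Sigma}h\,\p_3 d$.  Analogously, I would test the bulk surfactant equation against $\frac{-\sigma'_0f'(\b_0)}{\c_0}d$ on $\Omega$; integration by parts and the Neumann conditions $\p_3 d|_{\Sigma_b}=0$ and $\p_3 d|_{\Sigma}=\omega_{\c0}h+\omega_{\b0}d+Z^7$ yield the time derivative of $\frac{-\sigma'_0f'(\b_0)}{2\c_0}\int_{\Omega}\abs{d}^2$, the dissipation $\frac{-\beta\sigma'_0f'(\b_0)}{\c_0}\int_{\Omega}\abs{\na d}^2$, and boundary contributions proportional to $\int_{\Sigma}d(\omega_{\c0}h+\omega_{\b0}d)$ and $\int_{\Sigma}dZ^7$.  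The $h\,\p_3 d$ term from the surface step and the $d\,\p_3 d$ term from the bulk step combine into
\[
\frac{-\beta\sigma'_0}{\c_0}\int_{\Sigma}\Big(h-f'(\b_0)d\Big)(\omega_{\c0}h+\omega_{\b0}d) + \frac{-\beta\sigma'_0}{\c_0}\int_{\Sigma}\Big(h-f'(\b_0)d\Big)Z^7,
\]
and moving the $Z^7$ summand to the right-hand side matches the stated identity.

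The analysis is essentially bookkeeping.  The only subtlety---and the piece I would check most carefully---is the choice of multipliers $\frac{-\sigma'_0}{\c_0}$ and $\frac{-\sigma'_0f'(\b_0)}{\c_0}$ for the two surfactant equations.  The first coefficient is dictated by the requirement that $\sigma'_0\int_{\Sigma}\na_{\y}h\cdot v$ exactly kill the Marangoni residual from the stress boundary condition, while the second factor $f'(\b_0)$ is engineered so that the bulk Neumann term for $d$ combines with the surface coupling term for $h$ to yield the combination $h-f'(\b_0)d$ appearing in the dissipation $\bdi$.  Everything else is a direct Euclidean integration by parts, which is notably simpler than the geometric version in Lemma \ref{perturb 01} since no Jacobian or $\a$-dependent commutators are present.
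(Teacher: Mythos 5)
Your proposal is correct and follows essentially the same route as the paper: rewrite the momentum equation as $\dt v+\na\cdot\s(v,q)=Z^1-\na Z^2$, test with $v$, use the stress and kinematic boundary conditions, then test the surface and bulk surfactant equations with $\tfrac{-\sigma'_0}{\c_0}h$ and $\tfrac{-\sigma'_0 f'(\b_0)}{\c_0}d$ respectively so that the Marangoni cross-term cancels and the Neumann data combine into the $\big(h-f'(\b_0)d\big)$ factor. Your identification of the two multipliers as the only delicate point matches the structure of the paper's argument exactly.
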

\begin{proof}
From the Navier-Stokes equation, we have
\begin{eqnarray}
\dt v+\na\cdot\s(v,q)=\dt v-\de v+\na q-\na Z^2=Z^1-\na Z^2.
\end{eqnarray}
Multiplying $v$ on both sides of Navier-Stokes equation and integrating over $\Omega$, we have
\begin{eqnarray}
&&\frac{\ud{}}{\ud{t}}\bigg(\half\int_{\Omega}\abs{v}^2\bigg)+\half\int_{\Omega}\abs{\dd v}^2+\int_{\Sigma}\Big(\xi -\sigma_0\de_{\y}\xi\Big)v_3-\sigma'_0\int_{\Sigma}\na_{\y}h\cdot v\\
&=&\int_{\Omega}v\cdot Z^1+\int_{\Omega}(qZ^2-v\cdot\na Z^2)-\int_{\Sigma}v\cdot Z^3.\no
\end{eqnarray}
Also, we have
\begin{eqnarray}
\int_{\Sigma}\Big(\xi -\sigma_0\de_{\y}\xi\Big)v_3&=&\int_{\Sigma}\Big(\xi -\sigma_0\de_{\y}\xi\Big)(\dt\xi-Z^4)\\
&=&\frac{\ud{}}{\ud{t}}\bigg(\half\int_{\Sigma}\abs{\xi}^2
+\frac{\sigma_0}{2}\int_{\Sigma}\abs{\na_{\y}\xi}^2\bigg)-\int_{\Sigma}(\xi-\sigma_0\de_{\y}\xi)Z^4.\no
\end{eqnarray}
In surface surfactant equation, multiplying $\dfrac{-\sigma_0'}{\c_0}h$ on both sides and integrating over $\Sigma$, we have
\begin{eqnarray}
\frac{\ud{}}{\ud{t}}\bigg(\frac{-\sigma_0'}{2\c_0}\int_{\Sigma}\abs{h}^2\bigg)+\sigma'_0\int_{\Sigma}\na_{\y}h\cdot v+\frac{-\gamma\sigma'_0}{\c_0}\int_{\Sigma}\abs{\na_{\y}h}^2+\frac{-\beta\sigma'_0}{\c_0}\int_{\Sigma}h\p_3d=\frac{-\sigma'_0}{\c_0}\int_{\Sigma}hZ^5.
\end{eqnarray}
In surface surfactant equation, multiplying $\dfrac{-\beta\sigma'_0f'(\b_0)}{\c_0}d$ on both sides and integrating over $\Omega$, we have
\begin{eqnarray}
\frac{\ud{}}{\ud{t}}\bigg(\frac{-\sigma_0'f'(\b_0)}{2\c_0}\int_{\Omega}\abs{d}^2\bigg)+\frac{-\beta\sigma'_0f'(\b_0)}{\c_0}\int_{\Omega}\abs{\na d}^2-
\frac{-\beta\sigma'_0}{\c_0}\int_{\Sigma}f'(\b_0)d\p_3d=\int_{\Omega}dZ^6.
\end{eqnarray}
Note that
\begin{eqnarray}
\int_{\Sigma}\Big(h-f'(\b_0)d\Big)\p_3d=\int_{\Sigma}\Big(h-f'(\b_0)d\Big)(\omega_{\c0}h+\omega_{\b0}d)
+\int_{\Sigma}\Big(h-f'(\b_0)d\Big)Z^7.
\end{eqnarray}
Summarizing all above, we have the desired result.
\end{proof}

We will mainly use this form to study the estimate of spatial derivative. In the system \eqref{perturbation}, we have
\begin{equation}
\left\{
\begin{array}{ll}
\dt u-\de u+\na p=G^1&\ \ \text{in}\ \ \Omega,\\
\na\cdot u=G^2 &\ \ \text{in}\ \ \Omega,\\
(pI-\dd u)e_3=\e e_3-\sigma_0\de_{\y}\e e_3-\sigma'_0\na_{\y}\c+G^3&\ \ \text{on}\ \ \Sigma,\\
u=0&\ \ \text{on}\ \ \Sigma_b,\\
\dt\xi=u_3+G^4 &\ \ \text{on}\ \ \Sigma,\\
\ \\
\dt\c+\c_0\na_{\y}\cdot u=\de_{\y}\c-\p_3\b+G^5 &\ \ \text{on}\ \ \Sigma,\\
\dt\b=\de\b+G^6&\ \ \text{in}\ \ \Omega,\\
\p_3\b=\omega_{\c0}\c+\omega_{\b0}\b+G^7&\ \ \text{on}\ \ \Sigma,\\
\p_3\b=0&\ \ \text{on}\ \ \Sigma_b,
\end{array}
\right.
\end{equation}
where
\begin{eqnarray}
G^1&=&G^{1,1}+G^{1,2}+G^{1,3}+G^{1,4}+G^{1,5},\\
G^{1,1}_i&=&(\delta_{ij}-\a_{ij})\p_jp,\\
G^{1,2}_i&=&u_j\a_{jk}\p_ku_i,\\
G^{1,3}_i&=&\Big(K^2(1+A^2+B^2)-1\Big)\p_{33}u_i-2AK\p_{13}u_i-2BK\p_{23}u_i,\\
G^{1,4}_i&=&\Big(-K^3(1+A^2+B^2)\p_3J+AK^2(\p_1J+\p_3A)+BK^2(\p_2J+\p_3B)-K(\p_1A+\p_2B)\Big)\p_3u_i,\\
G^{1,5}_i&=&\dt\be\tilde LK\p_3u_i,
\end{eqnarray}
\begin{eqnarray}
G^2&=&AK\p_3u_1+BK\p_3u_2+(1-K)\p_3u_3,
\end{eqnarray}
\begin{eqnarray}
G^3&=&G^{3,1}+G^{3,2}+G^{3,3}+G^{3,4},\\
G^{3,1}&=&\p_1\e\left(\begin{array}{l}
p-\e-2(\p_1u_1-AK\p_3u_1)\\
-\p_2u_1-\p_1u_2+BK\p_3u_1+AK\p_3u_2\\
-\p_1u_3-K\p_3u_1+AK\p_3u_3
\end{array}\right)\\
&&+\p_2\e\left(\begin{array}{l}
-\p_2u_1-\p_1u_2+BK\p_3u_1+AK\p_3u_2\\
p-\e-2(\p_2u_2-BK\p_3u_2)\\
-\p_2u_3-K\p_3u_2+BK\p_3u_3
\end{array}\right)
+\left(\begin{array}{l}
(K-1)\p_3u_1+AK\p_3u_3\\
(K-1)\p_3u_2+BK\p_3u_3\\
2(K-1)\p_3u_3
\end{array}\right),\no\\
G^{3,2}&=&(\sigma(\tilde\c)-\sigma_0)\de_{\y}\e e_3+\sigma(\tilde\c)(H-\de_{\y}\e)\n+\sigma(\tilde\c)\de_{\y}\e(\n-e_3),\\
G^{3,3}&=&\Big(\sqrt{1+\abs{\na_{\y}\e}^2}-1\Big)\sigma'(\tilde\c)\na_{\y}\c+(\sigma'(\tilde\c)-\sigma'_0)\na_{\y}\c
+\sqrt{1+\abs{\na_{\y}\e}^2}\sigma'(\tilde\c)(\na_{\Gamma}\c-\na_{\y}\c),\\
G^{3,4}&=&\sigma'(\tilde\c)\nu_{\y}\cdot\na_{\y}\c e_3,
\end{eqnarray}
\begin{eqnarray}
G^4&=&-\p_1\e u_1-\p_2\e u_2,
\end{eqnarray}
\begin{eqnarray}
G^5&=&G^{5,1}+G^{5,2},\\
G^{5,1}&=&-u\cdot\na_{\y}\c-\c\na_{\Gamma}\cdot u+\gamma(\de_{\Gamma}\c-\de_{\y}\c)-\c_0(\na_{\Gamma}\cdot u-\na_{\y}\cdot u),\\
G^{5,2}&=&\beta(\p_1\b-AK\p_3\b)\p_1\e+\beta(\p_2\b-BK\p_3\b)\p_2\e+\beta(K-1)\p_3\b,
\end{eqnarray}
\begin{eqnarray}
G^6&=&G^{6,1}+G^{6,2}+G^{6,3}+G^{6,4},\\
G^{6,1}_i&=&u_j\a_{jk}\p_k\b,\\
G^{6,2}_i&=&\beta\Big(K^2(1+A^2+B^2)-1\Big)\p_{33}\b-2AK\p_{13}\b-2BK\p_{23}\b,\\
G^{6,3}_i&=&\beta\Big(-K^3(1+A^2+B^2)\p_3J+AK^2(\p_1J+\p_3A)+BK^2(\p_2J+\p_3B)-K(\p_1A+\p_2B)\Big)\p_3\b,\\
G^{6,4}_i&=&\dt\be\tilde LK\p_3\b,
\end{eqnarray}
\begin{eqnarray}
G^7&=&G^{7,1}+G^{7,2},\\
G^{7,1}&=&(\p_1\b-AK\p_3\b)\p_1\e+(\p_2\b-BK\p_3\b)\p_2\e+(K-1)\p_3\b,\\
G^{7,1}&=&\omega(\tilde\c,\tilde\b)-\omega_{\c0}\c-\omega_{\b0}\b.
\end{eqnarray}

\section{Preliminary estimates}

In this section, we mainly cover some preliminary estimates in energy-dissipation estimates, nonlinear estimates and comparison estimates.

\subsection{A basic lemma}

The following lemma provides basic estimates for terms that appear frequently in our analysis.

\begin{lemma}\label{nonlinear lemma 1}
There exists a universal $0<\delta\le 1$ such that if $\tms{\e}{\frac{5}{2}}\leq\delta$, then the following holds
\begin{enumerate}
\item
We have
\begin{eqnarray}
\nm{J-1}_{L^{\infty}(\Omega)}+\nm{A}_{L^{\infty}(\Omega)}+\nm{B}_{L^{\infty}(\Omega)}&\leq&\frac{1}{4},\\
\nm{K}_{L^{\infty}(\Omega)}+\nm{\a}_{L^{\infty}(\Omega)}&\leq&1.
\end{eqnarray}

\item
The mapping $\Phi$ is a diffeomorphism from $\Omega$ to $\Omega(t)$.
\item
For all $v\in H^1(\Omega)$ with $v=0$ on $\Sigma_b$, we have
\begin{eqnarray}
\int_{\Omega}\abs{\dd v}^2&\leq&\int_{\Omega}J\abs{\dl v}^2+C_0\sqrt{\en}\int_{\Omega}\abs{\dd v}^2.
\end{eqnarray}

\item
For all $\psi \in H^1(\Omega)$ we have
\begin{eqnarray}
\int_{\Omega} \abs{\na \psi}^2 & \leq& \int_{\Omega}J \abs{\nl \psi}^2 + C_0\sqrt{\en}\int_{\Omega}\abs{\na \psi}^2 .
\end{eqnarray}

\item We have the pointwise bounds
\begin{eqnarray}
-\frac{\c_0}{2}\leq\c\leq\frac{\c_0}{2},&\ \ &\frac{\c_0}{2}\leq\tilde\c\leq\frac{3\c_0}{2},\\
-\frac{\b_0}{2}\leq\b\leq\frac{\b_0}{2},&\ \ &\frac{\b_0}{2}\leq\tilde\b\leq\frac{3\b_0}{2}.
\end{eqnarray}
\end{enumerate}
\end{lemma}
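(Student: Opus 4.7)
The whole lemma reduces to two standard ingredients: harmonic extension estimates for $\be$ and Sobolev embedding in dimensions two and three. Since $\be$ is the Poisson extension of $\e$ into $\Sigma\times(-\infty,0)$, the standard bound $\tm{\be}{3}\ls\tms{\e}{5/2}$ combined with the 3D embedding $H^3(\Omega)\hookrightarrow W^{1,\infty}(\Omega)$ yields $\nm{\be}_{L^\infty(\Omega)}+\nm{\na\be}_{L^\infty(\Omega)}\ls\delta$. This is the quantitative input that drives everything else.

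For parts (1) and (2), I would read the expressions for $A$, $B$, $J-1$, and $\tilde L$ directly: each is a sum of factors $\p_i\be$ or $\be$ multiplied by uniformly bounded quantities, so each lies in $L^\infty$ at order $\delta$. Taking $\delta$ small enough then forces $\nm{J-1}_\infty+\nm{A}_\infty+\nm{B}_\infty\le 1/4$, which secures $J\ge 3/4$ pointwise; since $K=1/J$ and the remaining entries $-AK$, $-BK$ of $\a$ are controlled correspondingly, the second inequality of (1) is immediate. For the diffeomorphism in (2), $\Phi$ acts as the identity on the first two components, while $x_3\mapsto\Phi_3(x_1,x_2,x_3)$ is strictly increasing from $-\l$ to $\e(x_{\y},t)$ because $\p_3\Phi_3=J>0$; combined with the nonvanishing Jacobian this makes $\Phi$ a $C^1$ diffeomorphism from $\Omega$ onto $\Omega(t)$ via the inverse function theorem.

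For parts (3) and (4), I would decompose $\dl v=\dd v+(\dl v-\dd v)$, noting that every entry of $\dl v-\dd v$ is a sum of products of entries of $\a-I$ (namely $-AK$, $-BK$, $K-1$) with components of $\na v$. Expanding the square,
\begin{equation*}
J\abs{\dl v}^2=J\abs{\dd v}^2+2J\,\dd v:(\dl v-\dd v)+J\abs{\dl v-\dd v}^2,
\end{equation*}
so discarding the nonnegative last term, applying Cauchy-Schwarz to the cross term, and using $J\abs{\dd v}^2\ge\abs{\dd v}^2-\abs{J-1}\abs{\dd v}^2$ produces
\begin{equation*}
\int_\Omega\abs{\dd v}^2\le\int_\Omega J\abs{\dl v}^2+\bigl(\nm{J-1}_\infty+C\nm{\a-I}_\infty\bigr)\int_\Omega\abs{\dd v}^2.
\end{equation*}
Since $\nm{J-1}_\infty+\nm{\a-I}_\infty\ls\nm{\na\be}_\infty\ls\tms{\e}{5/2}\le\tms{\e}{3}\le\sqrt{\en}$, the prefactor is $C_0\sqrt{\en}$. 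The scalar case (4) is identical with $\na\psi$ replacing $\dd v$ and $\nl$ replacing $\dl$.

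For part (5), I would invoke the 2D embedding $H^2(\Sigma)\hookrightarrow L^\infty(\Sigma)$ and the 3D embedding $H^2(\Omega)\hookrightarrow L^\infty(\Omega)$ to get $\nm{\c}_\infty\ls\tms{\c}{2}\ls\sqrt{\en}$ and $\nm{\b}_\infty\ls\tm{\b}{2}\ls\sqrt{\en}$; the smallness of $\en$ (enforced together with the hypothesis on $\e$ in every application of this lemma) then yields $|\c|\le\c_0/2$ and $|\b|\le\b_0/2$, after which the $\tilde\c$ and $\tilde\b$ bounds are trivial. The main obstacle is the bookkeeping in (3) and (4): one has to verify that every error term produced by replacing $\a$ with $I$ really contains a factor bounded by $\sqrt{\en}$ (through $\na\be$) rather than some uncontrolled piece of $\na v$ or $\na\psi$, so that the one-sided inequality really can be closed.
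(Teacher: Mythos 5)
The paper does not prove this lemma in-house (it cites the analogous result in Kim--Tice), but your argument is the standard one that proof uses: Poisson-extension gain of half a derivative plus Sobolev embedding to get $\nm{\be}_{L^\infty}+\nm{\na\be}_{L^\infty}\ls \tms{\e}{5/2}$, and then direct inspection of $A$, $B$, $J$, $K$, $\a$. Parts (1), (2), (4), and (5) are fine as you have written them (with the understanding, which you correctly flag, that (5) needs smallness of $\en$, not merely of $\tms{\e}{5/2}$).

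There is, however, a concrete gap in part (3). The entries of $\dl v-\dd v$ are controlled by $\nm{\a-I}_{L^\infty}\abs{\na v}$, \emph{not} by $\nm{\a-I}_{L^\infty}\abs{\dd v}$, so after Cauchy--Schwarz your cross term is bounded by $\nm{\a-I}_{L^\infty}\int_\Omega\abs{\dd v}\abs{\na v}$, and the displayed inequality with $\int_\Omega\abs{\dd v}^2$ on the right does not yet follow. The missing ingredient is Korn's inequality, $\int_\Omega\abs{\na v}^2\ls\int_\Omega\abs{\dd v}^2$, which is exactly why the statement of part (3) carries the hypothesis $v=0$ on $\Sigma_b$ --- a hypothesis your argument never invokes. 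This is also why the scalar case (4) needs no boundary condition: there the error terms already come with $\abs{\na\psi}$, the quantity on the left. Your closing remark that one must check the error terms do not contain ``some uncontrolled piece of $\na v$'' identifies the right worry but does not resolve it; inserting Korn's inequality at that point closes the argument.
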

\begin{proof}
See the proof of \cite[Lemma 4.1]{Kim.Tice2016} with obvious modifications. The last inequality involving $\b_0$ can be proved as for $\c_0$.
\end{proof}

\subsection{Temporal estimates}

Now we turn our attention to estimating various nonlinearities that appear when we consider the temporally-differentiated problem.

\begin{lemma}\label{nonlinear lemma 2}
Suppose $\en\leq\delta<<1$. We have
\begin{eqnarray}\label{ct 04}
\tm{JF^1}{0}+\tms{F^3}{0}+\tms{F^4}{0}+\tm{JF^6}{0}&\leq&\sqrt{\en}\sqrt{\di},
\end{eqnarray}
\begin{eqnarray}\label{ct 05}
\abs{\int_{\Omega}JF^2\dt p-\dt\int_{\Omega}JF^2p}&\ls&\sqrt{\en}\di,\ \ \abs{\int_{\Omega}JF^2p}\ls \en^{\frac{3}{2}},
\end{eqnarray}
\begin{eqnarray}\label{ct 06}
\abs{\int_{\Sigma}F^5\dt\c}\ls \sqrt{\en}\di,
\end{eqnarray}
\begin{eqnarray}\label{ct 07}
\tms{F^7}{0}\ls\sqrt{\en}\sqrt{\di}.
\end{eqnarray}
\end{lemma}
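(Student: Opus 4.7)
The plan is to bound each nonlinearity $F^{i}$ by a systematic ``distribute the derivatives'' strategy: write every $F^{i}$ as a sum of products of (a) geometric factors built from $\bar\eta$ via $\Phi$ (namely $A$, $B$, $J$, $K$, $\mathcal{A}$, $\mathcal{N}$ and their time derivatives), (b) the basic unknowns $u,p,c,b$ and their derivatives, and (c) scalar functions of the surfactant concentrations coming from $\sigma$, $H$, and $\omega$. Lemma~\ref{nonlinear lemma 1} gives $L^\infty$ control of the geometric coefficients (up to first order) and the pointwise smallness of $\tilde c-c_0$, $\tilde b-b_0$, while the harmonic extension estimate $\nm{\bar\eta}_{H^{s+1/2}(\Omega)}\lesssim \nm{\eta}_{H^{s}(\Sigma)}$ converts bulk norms of $\bar\eta$ into surface norms of $\eta$ controlled by $\en$ or $\di$. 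Every estimate then reduces to a Hölder inequality $\nm{fg}_{0}\le \nm{f}_{L^p}\nm{g}_{L^q}$ (typically $p=\infty,q=2$ or $p=q=4$) followed by the Sobolev embeddings $H^2(\Omega)\hookrightarrow L^\infty$, $H^1(\Omega)\hookrightarrow L^4$, $H^1(\Sigma)\hookrightarrow L^4$, $H^{1/2}(\Sigma)\hookrightarrow L^4$.

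For the first block of inequalities \eqref{ct 04}, I would expand each $F^{1,k}$, $F^{3,k}$, $F^4$, $F^{6,k}$ in turn and in each product single out one factor to absorb into $\sqrt{\en}$ and one factor to absorb into $\sqrt{\di}$. For instance, a typical term in $F^{1,1}$ of the form $\partial_t^2\bar\eta\,\tilde L K\,\partial_3 u$ is controlled by $\nm{\partial_t^2\bar\eta}_{0}\nm{\partial_3 u}_{L^\infty}\lesssim \nm{\partial_t^2\eta}_{\Sigma,-1/2}\nm{u}_{3}\le \sqrt{\en}\sqrt{\di}$, while a term like $\partial_t\mathcal{A}\,\partial u\,\partial u$ is handled by putting two $L^4$ factors on the unknowns and one $L^\infty$ factor on the geometry. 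The curvature and Marangoni pieces in $F^{3,2}$, $F^{3,3}$ are treated identically after Taylor-expanding $\sigma,\sigma',\sigma''$ around $c_0$ and using \eqref{sigma_assume} so that the difference quotients contribute an additional smallness factor. In each case the product structure always has at least one factor whose $L^2$ norm is bounded by $\sqrt{\di}$ and the remaining factors are bounded in $L^\infty$ or $L^4$ by $\sqrt{\en}$ (or, after multiplying a subset of them, by constants on $[0,\delta]$).

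The second inequality \eqref{ct 05} requires an integration-by-parts in time. I would write
\begin{equation*}
\int_\Omega JF^2\,\partial_t p - \partial_t\!\int_\Omega JF^2\,p = -\int_\Omega \partial_t(JF^2)\,p,
\end{equation*}
and then bound $\nm{\partial_t(JF^2)}_{0}\lesssim \sqrt{\en\di}$ by the same method as in the first block (distinguishing $\partial_t J\cdot F^2$ and $J\cdot \partial_t F^2$, and noting that $F^2=-\partial_t\mathcal{A}_{ij}\partial_j u_i$ has two derivatives of $u$ paired with one derivative of $\bar\eta$), followed by $\nm{p}_0\le \sqrt{\en}$. The bound $|\int JF^2 p|\lesssim \en^{3/2}$ is even easier: $F^2$ is already a product $\partial_t\mathcal{A}\cdot \nabla u$, both at the energy level.

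For \eqref{ct 06} I would split $F^5=F^{5,1}+F^{5,2}$ and pair against $\partial_t c$, using that $\partial_t c\in L^2(\Sigma)$ sits inside $\sqrt{\en}$ but $\nabla \partial_t c\in L^2(\Sigma)$ sits inside $\sqrt{\di}$. The term $F^{5,1}$ involves $\nabla_{\Gamma}-\nabla_\ast$ and $\Delta_{\Gamma}-\Delta_\ast$, which produce factors linear in $\nabla_\ast\eta$ (hence small in $L^\infty(\Sigma)$ via $H^{5/2}(\Sigma)\hookrightarrow W^{1,\infty}$), and $F^{5,2}$ carries a factor $\beta\partial \bar\eta$ bounded in $L^\infty$. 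Here I expect the main complication will be accounting for the surface commutator terms like $\gamma(\Delta_\Gamma-\Delta_\ast)c$: one has to carefully integrate by parts on $\Sigma$ once so that at most one derivative lands on $\partial_t c$ and one sees the smallness factor of $\nabla_\ast \eta$ explicitly. Finally \eqref{ct 07} splits $F^7$ as $F^{7,1}$ (a geometric term of exactly the same type as $F^{5,2}$) and $F^{7,2}=\partial_t(\omega(\tilde c,\tilde b)-\omega_{c0}c-\omega_{b0}b)$; using the integral-remainder form \eqref{linearize} and the pointwise bounds in Lemma~\ref{nonlinear lemma 1}(5), the latter is a sum of quadratic expressions in $c,b,\partial_t c,\partial_t b$ times smooth bounded coefficients in $(\tilde c,\tilde b)$, which immediately yields an $L^2(\Sigma)$ bound of order $\sqrt{\en}\sqrt{\di}$ via Sobolev trace.

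The main obstacle I anticipate is the bookkeeping rather than any single estimate: each $F^{i}$ splits into many summands, and in each summand one must make the correct choice of which factor carries $\sqrt{\en}$ and which carries $\sqrt{\di}$. The critical consideration is that every $F^i$ is at least quadratic in small quantities (this is why linearizations around $(c_0,b_0,0)$ have been subtracted where applicable), so that one genuine factor of smallness is always available; if it fails to be so for a particular term, that indicates a misallocation of norms which can be fixed by trading $L^\infty$ and $L^4$ pairings.
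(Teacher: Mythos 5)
Your plan is essentially the paper's own argument: the paper handles \eqref{ct 04}, \eqref{ct 05}, and the $F^{5,1}$ part of \eqref{ct 06} by citing the corresponding estimates from Kim--Tice (which are exactly the H\"older/Sobolev/harmonic-extension product estimates you describe), and only writes out the new surfactant-flux terms $F^{5,2}$, $F^{7,1}$, $F^{7,2}$, the last via the integral-remainder form of \eqref{linearize} together with the pointwise bounds of Lemma \ref{nonlinear lemma 1} -- precisely your treatment.

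One allocation in your plan should be corrected, because taken literally it yields the wrong power. For \eqref{ct 05} you propose $\abs{\int_\Omega \dt(JF^2)\,p}\le \nm{\dt(JF^2)}_{H^0(\Omega)}\nm{p}_{H^0(\Omega)}$ with $\nm{p}_{H^0(\Omega)}\le\sqrt{\en}$, and for \eqref{ct 06} you place $\dt\c$ ``inside $\sqrt{\en}$''; combined with a factor $\sqrt{\en}\sqrt{\di}$ from the nonlinearity this gives $\en\sqrt{\di}$, not the claimed $\sqrt{\en}\di$. At this stage of the argument one cannot pass from $\en\sqrt{\di}$ to $\sqrt{\en}\di$ (that would require $\en\ls\di$, which is only available after the comparison estimates), and an $\en^{3/2}$-type remainder cannot be absorbed into the dissipation in Theorem \ref{energy theorem}. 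The paper's pairing is the opposite one: the test factor goes into the dissipation, using $\tm{p}{2}^2\le\di$ and $\tms{\dt\c}{0}\le\tms{\dt\c}{1}\ls\sqrt{\di}$, so that $\abs{\int_\Sigma F^{5,2}\dt\c}\ls\tms{F^{5,2}}{0}\tms{\dt\c}{1}\ls\sqrt{\en}\sqrt{\di}\cdot\sqrt{\di}=\sqrt{\en}\di$. With that correction (which your own closing remark about re-trading $L^\infty$/$L^4$ pairings anticipates), the proposal matches the paper.
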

\begin{proof}
\eqref{ct 04} and \eqref{ct 05} can be proved as \cite[Theorem 4.2]{Kim.Tice2016}. For \eqref{ct 06}, we already know the estimate of $F^{5,1}$, so we only need to bound
\begin{eqnarray}
F^{5,2}=-\beta(\dt\a_{ik})\p_k\b\n_i-\beta\a_{ik}\p_k\b(\dt\n_i).
\end{eqnarray}
We directly estimate
\begin{eqnarray}
\\
\tms{F^{5,2}}{0}&\ls&\nm{\dt\na\be}_{H^0(\Sigma)}\nm{\na\b}_{L^{\infty}(\Sigma)}\Big(1+\nm{\na_{\y}\e}_{L^{\infty}(\Sigma)}\Big)
+\nm{\dt\na_{\y}\e}_{H^0(\Sigma)}\nm{\na\b}_{L^{\infty}(\Sigma)} \Big( 1+\nm{\na\be}_{L^{\infty}(\Sigma)}\Big) \no\\
&\ls&\nm{\dt\be}_{H^{\frac{3}{2}}(\Omega)}\tms{\na\b}{\frac{3}{2}}\Big(1+\tms{\na_{\y}\e}{\frac{3}{2}}\Big)+\tms{\dt\e}{1}\tms{\na\b}{\frac{3}{2}} \Big( 1+\tms{\na\be}{\frac{3}{2}} \Big)\no\\
&\ls&\tms{\dt\e}{1}\tm{\b}{3}\Big(1+\tms{\e}{\frac{5}{2}}\Big)
\ls \sqrt{\en}\sqrt{\di}.
\end{eqnarray}
Thus,
\begin{eqnarray}
\abs{\int_{\Sigma}F^{5,2}\dt\c}&\ls&\tms{F^{5,2}}{0}\tms{\dt\c}{0}\ls \sqrt{\en}\di.
\end{eqnarray}
For \eqref{ct 07}, we need to estimate
\begin{eqnarray}
F^7&=&F^{7,1}+F^{7,2},\\
F^{7,1}&=&-(\dt\a_{ik})\p_k\b\n_i-\a_{ik}\p_k\b(\dt\n_i),\\
F^{7,2}&=&\dt\Big(\omega(\tilde\c,\tilde\b)-\omega_{\c0}\c-\omega_{\b0}\b\Big).
\end{eqnarray}
It is easy to see
\begin{eqnarray}
\tms{F^{7,1}}{0}=\beta\tms{F^{5,2}}{0}\ls \sqrt{\en}\sqrt{\di}.
\end{eqnarray}
On the other hand, using Taylor expansion as in \eqref{linearize}, we obtain
\begin{eqnarray}
\tms{F^{7,2}}{0}&\ls&  \Big(\tms{\dt\c}{0}+\tms{\dt\b}{0}\Big)\Big(\nm{\c}_{L^{\infty}(\Sigma)}+\nm{\b}_{L^{\infty}(\Sigma)} + \nm{\c}_{L^{\infty}(\Sigma)}^2 +\nm{\b}_{L^{\infty}(\Sigma)}^2\Big)  \\
&\ls&\Big(\tms{\dt\c}{0}+\tm{\dt\b}{1}\Big)\Big(\tms{\c}{\frac{3}{2}}+\tm{\b}{2} + \tms{\c}{\frac{3}{2}}^2 +\tm{\b}{2}^2\Big)\no\\
&\ls& \sqrt{\en}\sqrt{\di}.\no
\end{eqnarray}
Therefore, we know
\begin{eqnarray}
\tms{F^7}{0}\ls \sqrt{\en}\sqrt{\di}.
\end{eqnarray}
\end{proof}

\subsection{Spatial estimates}

Now we control various nonlinearities that appear in the spatially-differentiated problem.

\begin{lemma}\label{nonlinear lemma 3}
Suppose $\en\leq\delta<<1$. We have
\begin{eqnarray}\label{ct 08}
\\
\tm{G^1}{1}+\tm{G^2}{2}+\tms{G^3}{\frac{3}{2}}+\tms{G^4}{\frac{5}{2}}
+\tms{G^5}{1}+\tm{G^6}{1}+\tms{G^7}{\frac{3}{2}}&\ls& \sqrt{\en}\sqrt{\di},\no
\end{eqnarray}
\begin{eqnarray}\label{ct 09}
\\
\tm{G^1}{0}+\tm{G^2}{1}+\tms{G^3}{\frac{1}{2}}+\tms{G^4}{\frac{3}{2}}
+\tms{G^5}{0}+\tm{G^6}{0}+\tms{G^7}{\frac{1}{2}}&\ls& \en.\no
\end{eqnarray}
\begin{eqnarray}\label{ct 10}
\tms{\dt G^4}{\frac{1}{2}}&\ls& \sqrt{\en}\sqrt{\di},
\end{eqnarray}
\end{lemma}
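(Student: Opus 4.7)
The strategy is to treat this as a systematic bookkeeping exercise: for each of the explicitly defined forcing terms $G^{i,j}$, identify a product structure of the form (coefficient depending on $A,B,J,K,\eta$) $\times$ (derivative of $u,p,\eta,c,$ or $b$), and then apply Sobolev product estimates ($\|fg\|_{H^s(\Omega)} \lesssim \|f\|_{H^{s_1}}\|g\|_{H^{s_2}}$ for appropriate $s_1,s_2 \ge s$ satisfying the usual Sobolev constraints, and the analogous statement on $\Sigma$) together with the pointwise and $L^\infty$ bounds on $A,B,J,K$ from Lemma \ref{nonlinear lemma 1}. The key observation that drives every bound in \eqref{ct 08} is that each $G^{i,j}$ is at least \emph{bilinear} in the unknowns, with one factor that vanishes at equilibrium (and so can be absorbed into $\sqrt{\en}$ via the smallness hypothesis) and another factor that can be bounded by $\sqrt{\di}$ in the relevant norm. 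The bound \eqref{ct 09} then follows because we are asking for lower-order norms, which lets us place both factors into $\sqrt{\en}$, giving a full $\en$.

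For the "geometric" pieces $G^{1,1},G^{1,3},G^{1,4},G^{1,5},G^2,G^{3,1},G^4,G^{5,2},G^{6,1}$--$G^{6,4}$, and $G^{7,1}$, the coefficient factor (things like $1-K$, $A$, $B$, $J-1$, $\partial_i A$, $\dt\bar\eta$, $\nabla_\ast\eta$, $\mathcal{N}-e_3$, $\nabla_\Gamma - \nabla_\ast$) is built from horizontal derivatives of $\bar\eta$ and its time derivative. Via the harmonic extension estimates and the control of $\eta$ and $\dt\eta$ by $\en$ and $\di$, one checks that all such coefficients lie in $H^{5/2}(\Omega)$ with a bound by $\sqrt{\en}$, and in $H^{3/2}(\Omega)$ by $\sqrt{\di}$ (when the time derivative appears). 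Pairing this against the remaining $u,p,b$ factor, which sits in $H^{1}$--$H^{3}$ of $\Omega$ controlled by $\sqrt{\di}$ (or in $\sqrt{\en}$ for \eqref{ct 09}), yields the desired product bound. The only subtlety is distributing derivatives: for $G^1, G^6 \in H^1(\Omega)$ we put one derivative on the lowest-regularity factor and use a trace/embedding for the rest, while for the surface quantities $G^3,G^4,G^5,G^7 \in H^{s}(\Sigma)$ we first estimate them as traces of bulk products and then use trace theory.

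For the "chemical" pieces $G^{3,2},G^{3,3},G^{3,4},G^{5,1},G^{7,2}$, which involve $\sigma(\tilde c)-\sigma_0$, $\sigma'(\tilde c)-\sigma'_0$, $H-\Delta_\ast\eta$, $\nabla_\Gamma - \nabla_\ast$, and the Taylor remainder $\omega(\tilde c,\tilde b) - \omega_{c0} c - \omega_{b0} b$, the plan is to write each as an integral remainder (as in \eqref{linearize} for $\omega$, or as in the Taylor formula for $\sigma$ composed with $\tilde c=c_0+c$) so that the coefficient factor is quadratic in $(c,b,\nabla_\ast\eta)$. These are then bounded by $\|c\|_{H^{s}}^2 + \|b\|_{H^{s}}^2 + \|\eta\|_{H^{s+1}(\Sigma)}^2$ for suitable $s$, with one factor of $\en$ absorbed into the small parameter and the other giving the remaining $\sqrt{\en}\sqrt{\di}$ (resp. $\en$) estimate. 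The pointwise bounds on $\tilde c,\tilde b$ from Lemma \ref{nonlinear lemma 1} keep all nonlinear compositions of $\sigma,\sigma',\sigma'',\omega$ smooth.

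The bound \eqref{ct 10} on $\|\dt G^4\|_{\Sigma,1/2}$ for $G^4 = -\p_1\e u_1 - \p_2\e u_2$ is handled by applying $\dt$ to produce two terms of the form $\p_i\dt\eta \cdot u_i$ and $\p_i\eta \cdot \dt u_i$. For the first we put $\p_i\dt\eta$ into $H^{1/2}(\Sigma)$ (controlled by $\sqrt{\di}$ via the $\dt\eta\in H^{5/2}$ term in $\di$) and $u$ into $H^{3/2}(\Sigma)$ via trace from $H^2(\Omega) \subset \sqrt{\en}$; for the second we do the opposite split, using that $\dt u \in H^1(\Omega) \subset \sqrt{\di}$. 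The main obstacle throughout this lemma is purely combinatorial: each $G^i$ splits into many sub-terms, each requiring a careful distribution of regularity between coefficient and unknown so that the Sobolev product estimates close on the target space with exactly one factor of $\sqrt{\en}$ available to provide the smallness.
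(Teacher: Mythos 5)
Your proposal is correct and follows essentially the same route as the paper: term-by-term Sobolev product estimates (\eqref{ap 01}, \eqref{ap 02}) combined with the $L^\infty$/pointwise bounds of Lemma \ref{nonlinear lemma 1} and the Taylor-remainder treatment \eqref{linearize} of $\omega(\tilde c,\tilde b)-\omega_{\c0}\c-\omega_{\b0}\b$, placing one factor in $\sqrt{\di}$ for \eqref{ct 08} and both in $\sqrt{\en}$ for \eqref{ct 09}. The paper simply defers all terms already present in \cite[Theorem 4.3]{Kim.Tice2016} to that reference and only writes out the new pieces $G^{7,1}$ (noting $G^{5,2}=\beta G^{7,1}$) and $G^{7,2}$, exactly as your scheme would produce them; the only nit is that in your split for $\p_i\dt\e\, u_i$ the pairing $H^{1/2}\times H^{3/2}\to H^{1/2}$ on $\Sigma$ is borderline for \eqref{ap 02}, but taking $\p_i\dt\e\in H^{3/2}(\Sigma)$ instead closes it via \eqref{ap 01}.
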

\begin{proof}
In \eqref{ct 08}, the estimates of $G^1$, $G^2$, $G^3$, $G^4$, $\dt G^4$, $G^{5,1}$ and $G^6$ can be proved as \cite[Theorem 4.3]{Kim.Tice2016}. It suffices to estimate $G^7=G^{7,1}+G^{7,2}$ since $G^{5,2}=\beta G^{7,1}$. By \eqref{ap 01} with $r=s_1=s_2=\dfrac{3}{2}$, we have
\begin{eqnarray}
\tms{G^{7,1}}{\frac{3}{2}}&=&\tms{(\p_1\b-AK\p_3\b)\p_1\e+(\p_2\b-BK\p_3\b)\p_2\e+(K-1)\p_3\b}{\frac{3}{2}}\\
&\ls&\Big(1+\tms{\na\be}{\frac{3}{2}}\Big)\tms{\na\b}{\frac{3}{2}}\tms{\na\e}{\frac{3}{2}}+\tms{\na\be}{\frac{3}{2}}\tms{\na\b}{\frac{3}{2}}\no\\
&\ls&\tms{\e}{\frac{5}{2}}\tm{\b}{3}\ls \sqrt{\en}\sqrt{\di}.\no
\end{eqnarray}
Using Taylor expansion as \eqref{linearize}, we obtain
\begin{eqnarray}
\tms{G^{7,2}}{\frac{3}{2}}&=&\tms{\omega(\tilde\c,\tilde\b)-\omega_{\c0}\c-\omega_{\b0}\b}{\frac{3}{2}}\\
&\ls&\nm{\omega}_{C^4}\Big(\tms{\c}{3}+\tm{\b}{3}\Big)\Big(\tms{\c}{2}+\tm{\b}{2}\Big)^2\abs{\int_0^1s(1-s)\ud{s}}\no\\
&\ls&\sqrt{\en}\sqrt{\di}.\no
\end{eqnarray}
Similarly, in \eqref{ct 09}, the estimates of $G^1$, $G^2$, $G^3$, $G^4$, $G^{5,1}$ and $G^6$ can be proved as before. By \eqref{ap 02} with $r=s_1=\dfrac{1}{2}$ and $s_2=2$, we estimate $G^7$ as
\begin{eqnarray}
\tms{G^{7,1}}{\frac{1}{2}}&=&\tms{(\p_1\b-AK\p_3\b)\p_1\e+(\p_2\b-BK\p_3\b)\p_2\e+(K-1)\p_3\b}{\frac{1}{2}}\\
&\ls&\Big(1+\tms{\na\be}{2}\Big)\tms{\na\b}{\frac{1}{2}}\tms{\na\e}{2}+\tms{\na\be}{2}\tms{\na\b}{\frac{1}{2}}\no\\
&\ls&\tms{\e}{3}\tm{\b}{2}\ls \en.\no
\end{eqnarray}
Also, using Taylor expansion as \eqref{linearize}, we get
\begin{eqnarray}
\tms{G^{7,2}}{\frac{1}{2}}&=&\tms{\omega(\tilde\c,\tilde\b)-\omega_{\c0}\c-\omega_{\b0}\b}{\frac{1}{2}}\\
&\ls&\nm{\omega}_{C^3}\Big(\tms{\c}{2}+\tm{\b}{2}\Big)^3\abs{\int_0^1s(1-s)\ud{s}}\ls\en.\no
\end{eqnarray}
\end{proof}

\section{Energy-dissipation estimates}

In this section, we begin to investigate the energy-dissipation structure for temporal and horizontal derivatives.  Define
\begin{eqnarray}
\f=\int_{\Omega}JF^2p  +  \half\int_{\Omega}(J-1)\abs{\dt u}^2 +\frac{-\sigma_0'f'(\b_0)}{2\c_0}\int_{\Omega}(J-1)\abs{\dt\b}^2.
\end{eqnarray}
\begin{theorem}\label{energy theorem}
Suppose
\begin{eqnarray}
\sup_{t\in[0,T]}\en(t)\leq\delta << 1,\ \ \int_0^T\di(t)\ud{t}<\infty.
\end{eqnarray}
We have
\begin{eqnarray}
\frac{\ud{}}{\ud{t}}(\ben-\f)+\bdi\ls\sqrt{\en}\di.
\end{eqnarray}
\end{theorem}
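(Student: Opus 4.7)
The plan is to apply every horizontal or temporal derivative $\p^\al$ with $\al_3=0$ and $|\al|\le 2$ to the perturbation system for $(u,p,\e,\c,\b)$ and then invoke the appropriate perturbed-form identity from Section 3. The admissible indices split into two disjoint classes: the purely horizontal cases $\al_0=0$, $\al_1+\al_2\in\{0,1,2\}$, treated with the linear perturbed form \eqref{perturb equation 2} via Lemma \ref{perturb 02}; and the single temporal case $\al=(1,0,0,0)$, treated with the geometric form \eqref{perturb equation 1} via Lemma \ref{perturb 01}. Summing the resulting identities is engineered to reproduce $\tfrac{d}{dt}\ben + \bdi$ on the left, with $\f$ collecting two kinds of corrections: those arising from the mismatch between $J$-weighted integrals (produced by Lemma \ref{perturb 01}) and the flat-domain integrals defining $\ben$, and one arising from a time-integration-by-parts used to handle the pressure--divergence pairing.

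\textbf{Horizontal derivatives.} For $\al_0=0$ the unknowns $(\p^\al u,\p^\al p,\p^\al\e,\p^\al\c,\p^\al\b)$ solve \eqref{perturb equation 2} with $Z^i=\p^\al G^i$, since $\p^\al$ commutes with every flat-domain linear operator on the left of \eqref{perturb equation 2}. The LHS of Lemma \ref{perturb 02} then reproduces exactly the $\al$-th summand of $\tfrac{d}{dt}\ben + \bdi$, once one uses $\omega_{\b0}=-\omega_{\c0}f'(\b_0)$ to collapse the interaction integral into the perfect square $\omega_{\c0}(\p^\al\c - f'(\b_0)\p^\al\b)^2$, matching the fourth piece of $\bdi$. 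The right-hand side is controlled by pairing the $\p^\al$-quantities against $\p^\al G^i$ via Cauchy--Schwarz in the volume and trace inequalities on $\Sigma$, using the estimates \eqref{ct 08} and \eqref{ct 09} of Lemma \ref{nonlinear lemma 3}; depending on the order of $\al$, one moves derivatives to whichever factor yields the required bound $\ls\sqrt{\en}\,\di$.

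\textbf{Temporal derivative and the $\f$ correction.} For $\al=(1,0,0,0)$ I apply Lemma \ref{perturb 01} with $(v,q,\xi,h,d)=(\dt u,\dt p,\dt\e,\dt\c,\dt\b)$ and the $F^i$ catalogued in Section 3. The LHS contains $J$-weighted integrals; writing $J=1+(J-1)$, I push the $(J-1)$-weighted portions of $\tfrac{1}{2}\int_\Omega J|\dt u|^2$ and $\tfrac{-\sigma_0'f'(\b_0)}{2\c_0}\int_\Omega J|\dt\b|^2$ into $\f$, matching its second and third summands, so that the flat-domain remainders match the $\al$-th summand of $\ben$. Parts (3) and (4) of Lemma \ref{nonlinear lemma 1} then let me replace $\int_\Omega J|\dl\dt u|^2$ and $\int_\Omega J|\nl\dt\b|^2$ by the $\int_\Omega|\dd\dt u|^2$ and $\int_\Omega|\na\dt\b|^2$ appearing in $\bdi$, modulo errors $\ls\sqrt{\en}\,\di$. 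Time differentiation of the $(J-1)$-weights contributes $\dt J$-type lower-order terms, also bounded by $\sqrt{\en}\,\di$. Finally, the RHS forcings $\int Jv\cdot F^1$, $\int_\Sigma v\cdot F^3$, $\int_\Sigma(\xi - \sigma_0\de_\y\xi)F^4$, $\int_\Sigma h F^5$, $\int_\Omega Jd F^6$, and $\int_\Sigma(h-f'(\b_0)d)F^7$ are controlled via \eqref{ct 04}, \eqref{ct 06}, and \eqref{ct 07} of Lemma \ref{nonlinear lemma 2}, together with standard trace inequalities.

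\textbf{Main obstacle.} The one term that does not directly yield a $\sqrt{\en}\,\di$ bound is the pressure--divergence pairing $\int_\Omega JqF^2 = \int_\Omega JF^2\,\dt p$ from the RHS of Lemma \ref{perturb 01}: because $\dt p$ is not in the dissipation hierarchy, a direct Cauchy--Schwarz against $F^2$ falls short. The remedy is the identity \eqref{ct 05} of Lemma \ref{nonlinear lemma 2},
\[
\int_\Omega JF^2\,\dt p \;=\; \frac{d}{dt}\int_\Omega JF^2\,p \;+\; \mathcal{R},\qquad |\mathcal{R}|\ls\sqrt{\en}\,\di,
\]
so that moving the exact time derivative to the LHS of the energy identity produces precisely the $-\int_\Omega JF^2 p$ contribution in $\f$, while $\mathcal{R}$ is acceptable on the right. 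Summing all the identities over the admissible $\al$ and regrouping then delivers $\tfrac{d}{dt}(\ben-\f)+\bdi\ls\sqrt{\en}\,\di$, as claimed.
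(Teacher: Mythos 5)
Your proposal matches the paper's proof essentially step for step: the same split between the geometric form (Lemma \ref{perturb 01}) applied to $(\dt u,\dt p,\dt\e,\dt\c,\dt\b)$ and the linear form (Lemma \ref{perturb 02}) with $Z^i=\p^\al G^i$ for the purely horizontal derivatives, the same $J=1+(J-1)$ absorption into $\f$, the same handling of $\int_\Omega JF^2\,\dt p$ via \eqref{ct 05}, and the same use of Lemmas \ref{nonlinear lemma 1}--\ref{nonlinear lemma 3} together with an integration by parts on the $\p^\al G^i$ pairings when $\abs{\al}=2$. The argument is correct and complete as proposed.
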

\begin{proof}
We apply operator $\p^{\al}$ for $\al=(\al_0,\al_1,\al_2,0)$ to the system \eqref{perturbation} and estimate each term. We divide it into several steps:\\
\ \\
Step 1: Temporal derivatives.\\
We apply $\dt$ to system \eqref{perturbation}. Then we have
\begin{eqnarray}\label{energy theorerm temporal}
&&\frac{\ud{}}{\ud{t}}\bigg(\half\int_{\Omega}J\abs{\dt u}^2+\half\int_{\Sigma}\abs{\dt\e}^2
+\frac{\sigma_0}{2}\int_{\Sigma}\abs{\na_{\y}\dt\e}^2 + \frac{-\sigma_0'f'(\b_0)}{2\c_0}\int_{\Omega}J\abs{\dt\b}^2 + \frac{-\sigma_0'}{2\c_0}\int_{\Sigma}\abs{\dt\c}^2\bigg)\\
&&+\bigg(\half\int_{\Omega}J\abs{\dl\dt u}^2+\frac{-\beta\sigma'_0f'(\b_0)}{\c_0}\int_{\Omega}J\abs{\nl\dt\b}^2+\frac{-\gamma\sigma'_0}{\c_0}\int_{\Sigma}\abs{\na_{\y}\dt\c}^2\bigg)\no\\
&&+\frac{-\beta\sigma'_0}{\c_0}\int_{\Sigma}\Big(\dt\c-f'(\b_0)\dt\b\Big)(\omega_{\c0}\dt\c+\omega_{\b0}\dt\b)\no\\
&=&\int_{\Omega}J\dt u\cdot F^1+\int_{\Omega}J\dt pF^2-\int_{\Sigma}\dt u\cdot F^3+\int_{\Sigma}(\dt\e-\sigma_0\de_{\y}\dt\e)F^4\no\\
&&+\frac{-\sigma'_0}{\c_0}\int_{\Sigma}\dt\c F^5+\frac{-\sigma'_0f'(\b_0)}{\c_0}\int_{\Omega}J\dt\b F^6
+\frac{\beta\sigma'_0}{\c_0}\int_{\Sigma}\Big(\dt\c+f'(\b_0)\dt\b\Big)F^7.\no
\end{eqnarray}
For the first and fourth terms inside the time derivative, we write
\begin{eqnarray}
 \half\int_{\Omega}J\abs{\dt u}^2 + \frac{-\sigma_0'f'(\b_0)}{2\c_0}\int_{\Omega}J\abs{\dt\b}^2 =  \half\int_{\Omega}\abs{\dt u}^2 + \frac{-\sigma_0'f'(\b_0)}{2\c_0}\int_{\Omega} \abs{\dt\b}^2 \no \\
 +   \half\int_{\Omega}(J-1)\abs{\dt u}^2 +\frac{-\sigma_0'f'(\b_0)}{2\c_0}\int_{\Omega}(J-1)\abs{\dt\b}^2
\end{eqnarray}
in order to eliminate the $J$ factors and absorb the error terms with $(J-1)$ into $\mathcal{F}$. We may estimate the right-hand side of \eqref{energy theorerm temporal} as
\begin{eqnarray}
RHS&\ls&\tm{\dt u}{0}\tm{JF^1}{0}+\frac{\ud{}}{\ud{t}}\int_{\Omega}JpF^2+\abs{\int_{\Sigma}p(\dt JF^2+J\dt F^2)}+\tms{\dt u}{0}\tms{F^3}{0}\\
&&+\tms{\dt\e-\sigma_0\de_{\y}\dt\e}{0}\tms{F^4}{0}+\abs{\int_{\Sigma}\dt\c F^5}+\tm{\dt\b}{0}\tm{JF^6}{0}\no\\
&&+\tms{\dt\c+f'(\b_0)\dt\b}{0}\tms{F^7}{0}.\no
\end{eqnarray}
Combining this estimate with Lemmas \ref{nonlinear lemma 1} and \ref{nonlinear lemma 2} then shows that
\begin{eqnarray}\label{energy theorerm temporal final}
&&\frac{\ud{}}{\ud{t}}\bigg(\half\int_{\Omega}\abs{\dt u}^2+\half\int_{\Sigma}\abs{\dt\e}^2
+\frac{\sigma_0}{2}\int_{\Sigma}\abs{\na_{\y}\dt\e}^2 + \frac{-\sigma_0'f'(\b_0)}{2\c_0}\int_{\Omega}\abs{\dt\b}^2 + \frac{-\sigma_0'}{2\c_0}\int_{\Sigma}\abs{\dt\c}^2 + \mathcal{F}\bigg) \\
&&+\bigg(\half\int_{\Omega}\abs{\dd \dt u}^2+\frac{-\beta\sigma'_0f'(\b_0)}{\c_0}\int_{\Omega}\abs{\na\dt\b}^2+\frac{-\gamma\sigma'_0}{\c_0}\int_{\Sigma}\abs{\na_{\y}\dt\c}^2\bigg)\no\\
&&+\frac{-\beta\sigma'_0}{\c_0}\int_{\Sigma}\Big(\dt\c-f'(\b_0)\dt\b\Big)(\omega_{\c0}\dt\c+\omega_{\b0}\dt\b)\no
\ls \sqrt{\en}\di
\end{eqnarray}

\ \\
Step 2: Spatial derivative - second order.\\
We apply $\p^{\al}$ with $\al_0=0$ and $\abs{\al}=2$ to system \eqref{perturbation}. Then we have
\begin{eqnarray}
&&\frac{\ud{}}{\ud{t}}\bigg(\half\int_{\Omega}\abs{\p^{\al}u}^2+\half\int_{\Sigma}\abs{\p^{\al}\e}^2
+\frac{\sigma_0}{2}\int_{\Sigma}\abs{\na_{\y}\p^{\al}\e}^2+\frac{-\sigma_0'f'(\b_0)}{2\c_0}\int_{\Omega}\abs{\p^{\al}\b}^2
+\frac{-\sigma_0'}{2\c_0}\int_{\Sigma}\abs{\p^{\al}\c}^2\bigg)\\
&&+\bigg(\half\int_{\Omega}\abs{\dd\p^{\al}u}^2+\frac{-\beta\sigma'_0f'(\b_0)}{\c_0}\int_{\Omega}\abs{\na\p^{\al}\b}^2
+\frac{-\gamma\sigma'_0}{\c_0}\int_{\Sigma}\abs{\na_{\y}\p^{\al}\c}^2\bigg)\no\\
&&+\frac{-\beta\sigma'_0}{\c_0}\int_{\Sigma}\Big(\p^{\al}\c-f'(\b_0)\p^{\al}\b\Big)(\omega_{\c0}\p^{\al}\c+\omega_{\b0}\p^{\al}\b)\no\\
&=&\int_{\Omega}\p^{\al}u\cdot \p^{\al}G^1+\int_{\Omega}(\p^{\al}p\p^{\al}G^2-\p^{\al}u\cdot\na \p^{\al}G^2)-\int_{\Sigma}\p^{\al}u\cdot \p^{\al}G^3+\int_{\Sigma}(\p^{\al}\e-\sigma_0\de_{\y}\p^{\al}\e)\p^{\al}G^4\no\\
&&+\frac{-\sigma'_0}{\c_0}\int_{\Sigma}\p^{\al}\c\p^{\al}G^5+\frac{-\sigma'_0f'(\b_0)}{\c_0}\int_{\Omega}\p^{\al}\b\p^{\al}G^6
+\frac{\beta\sigma'_0}{\c_0}\int_{\Sigma}\Big(\p^{\al}\c+f'(\b_0)\p^{\al}\b\Big)\p^{\al}G^7.\no
\end{eqnarray}
Assume $\p^{\al}=\p^{\vartheta}\p^{\varphi}$ for $\abs{\vartheta}=\abs{\varphi}=1$. We may integrate by parts on the right-hand side to obtain
\begin{eqnarray}
RHS&=&-\int_{\Omega}\p^{\al+\vartheta}u\cdot \p^{\varphi}G^1+\int_{\Omega}(\p^{\al}p\p^{\al}G^2-\p^{\al+\vartheta}u\cdot\na \p^{\varphi}G^2)-\int_{\Sigma}\p^{\al}u\cdot \p^{\al}G^3\\
&&+\int_{\Sigma}(\p^{\varphi}\e-\sigma_0\de_{\y}\p^{\varphi}\e)\p^{\al+\vartheta}G^4+\frac{-\sigma'_0}{\c_0}\int_{\Sigma}\p^{\al+\vartheta}\c\p^{\varphi}G^5
+\frac{-\sigma'_0f'(\b_0)}{\c_0}\int_{\Omega}\p^{\al+\vartheta}\b\p^{\varphi}G^6\no\\
&&+\frac{\beta\sigma'_0}{\c_0}\int_{\Sigma}\Big(\p^{\al}\c+f'(\b_0)\p^{\al}\b\Big)\p^{\al}G^7\no\\
&\ls&\tm{u}{3}\tm{G^1}{1}+\tm{p}{2}\tm{G^2}{2}+\tm{u}{3}\tm{G^2}{2}+\tms{u}{\frac{5}{2}}\tms{G^3}{\frac{3}{2}}\no\\
&&+\tms{\e}{\frac{7}{2}}\tms{G^4}{\frac{5}{2}}+\tms{\c}{3}\tms{G^5}{1}+\tm{\b}{3}\tm{G^6}{1}\no\\
&&+\Big(\tms{\c}{\frac{5}{2}}+\tm{\b}{3}\Big)\tms{G^7}{\frac{3}{2}}.\no
\end{eqnarray}
We then employ Lemma \ref{nonlinear lemma 3} to deduce that
\begin{eqnarray}\label{energy theorerm spatial final}
&&\frac{\ud{}}{\ud{t}}\bigg(\half\int_{\Omega}\abs{\p^{\al}u}^2+\half\int_{\Sigma}\abs{\p^{\al}\e}^2
+\frac{\sigma_0}{2}\int_{\Sigma}\abs{\na_{\y}\p^{\al}\e}^2+\frac{-\sigma_0'f'(\b_0)}{2\c_0}\int_{\Omega}\abs{\p^{\al}\b}^2
+\frac{-\sigma_0'}{2\c_0}\int_{\Sigma}\abs{\p^{\al}\c}^2\bigg)\\
&&+\bigg(\half\int_{\Omega}\abs{\dd\p^{\al}u}^2+\frac{-\beta\sigma'_0f'(\b_0)}{\c_0}\int_{\Omega}\abs{\na\p^{\al}\b}^2
+\frac{-\gamma\sigma'_0}{\c_0}\int_{\Sigma}\abs{\na_{\y}\p^{\al}\c}^2\bigg)\no\\
&&+\frac{-\beta\sigma'_0}{\c_0}\int_{\Sigma}\Big(\p^{\al}\c-f'(\b_0)\p^{\al}\b\Big)(\omega_{\c0}\p^{\al}\c+\omega_{\b0}\p^{\al}\b) \ls \sqrt{\en}\di\no
\end{eqnarray}
whenever $\abs{\alpha}=2$ and $\alpha_0=0$.

\ \\
Step 3: Spatial derivative - first order and zeroth order.\\
We apply $\p^{\al}$ with $\al_0=0$ and $\abs{\al}\leq 1$ to system \eqref{perturbation}. An argument simpler than the one used in Step 2 shows that \eqref{energy theorerm spatial final} holds also for this range of $\alpha$.

\ \\
Step 4: Conclusion.\\
To complete the estimate we sum \eqref{energy theorerm temporal final} with \eqref{energy theorerm spatial final}, with the latter applied for all $\abs{\alpha} \le 2$ and $\alpha_0=0$.

\end{proof}

\section{Comparison estimates}

In this section, we turn to the crucial comparison estimate between horizontal quantities and full quantities.

\subsection{Preliminaries}

The next result is the key to proving coercivity of the dissipation over the energy space.  It is a Poincar\'{e}-type inequality for pairs of functions.

\begin{lemma}\label{prelim lemma 1}
For $C\in H^1(\Sigma)$ and $B\in H^1(\Omega)$ satisfying
\begin{eqnarray}
\int_{\Sigma}C+\int_{\Omega}B=0,
\end{eqnarray}
we have
\begin{eqnarray}
\tms{C}{0}+\tm{B}{0}\ls \tms{\na_{\y} C}{0}+\tm{\na B}{0}+\tms{C-f'(\b_0)B}{0}.
\end{eqnarray}
\end{lemma}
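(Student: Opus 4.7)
The plan is to reduce the claim to the standard Poincaré–Wirtinger inequality on $\Sigma$ and $\Omega$, and then to use the mass constraint together with the term $\|C-f'(b_0)B\|_{\Sigma,0}$ to control the two averages. Write
\begin{equation*}
\bar C = \frac{1}{|\Sigma|_2}\int_\Sigma C, \qquad \bar B = \frac{1}{|\Omega|_3}\int_\Omega B,
\end{equation*}
so that the usual Poincaré–Wirtinger inequalities yield
\begin{equation*}
\|C\|_{\Sigma,0}\lesssim \|\nabla_\ast C\|_{\Sigma,0}+|\bar C|,\qquad \|B\|_0\lesssim \|\nabla B\|_0+|\bar B|.
\end{equation*}
Thus the whole statement will follow once I bound $|\bar C|+|\bar B|$ by the right-hand side.

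Next I would extract two linear relations between the scalars $\bar C$ and $\bar B$. The first is immediate from the hypothesis:
\begin{equation*}
|\Sigma|_2\,\bar C+|\Omega|_3\,\bar B=0.
\end{equation*}
For the second, I would test against the third dissipative quantity. Since $B\in H^1(\Omega)$ has a trace in $H^{1/2}(\Sigma)$, and $\int_\Sigma B = |\Sigma|_2\bar B + \int_\Sigma(B-\bar B)$, substituting in $\int_\Sigma(C-f'(b_0)B)$ and eliminating $\bar C$ via the mass constraint gives
\begin{equation*}
\int_\Sigma\bigl(C-f'(b_0)B\bigr) = -\bigl(|\Omega|_3+f'(b_0)|\Sigma|_2\bigr)\bar B-f'(b_0)\int_\Sigma(B-\bar B).
\end{equation*}
The coefficient of $\bar B$ on the right is strictly positive because $f'(b_0)>0$ by \eqref{omega_assume}, so it can be inverted. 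The two integrals on the opposite side are controlled as
\begin{equation*}
\left|\int_\Sigma(C-f'(b_0)B)\right|\le|\Sigma|_2^{1/2}\|C-f'(b_0)B\|_{\Sigma,0},
\end{equation*}
\begin{equation*}
\left|\int_\Sigma(B-\bar B)\right|\lesssim\|B-\bar B\|_{H^{1/2}(\Sigma)}\lesssim\|B-\bar B\|_{H^1(\Omega)}\lesssim\|\nabla B\|_0,
\end{equation*}
using the trace inequality and the Poincaré–Wirtinger inequality on $\Omega$. Combining yields $|\bar B|\lesssim \|C-f'(b_0)B\|_{\Sigma,0}+\|\nabla B\|_0$, and the mass constraint transfers the same bound to $|\bar C|$.

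The only subtle point is making sure the combination $|\Omega|_3+f'(b_0)|\Sigma|_2$ is nonzero; this is precisely where the sign condition $f'(b)>0$ from \eqref{omega_assume} enters, and it reflects the physical expectation that one cannot redistribute mass between surface and bulk while respecting the linearized isotherm without changing the averages in a controlled way. Everything else is routine: the Poincaré–Wirtinger inequality on $\Sigma$, the Poincaré–Wirtinger inequality on the Lipschitz domain $\Omega$, and the standard $H^1(\Omega)\hookrightarrow H^{1/2}(\Sigma)$ trace estimate.
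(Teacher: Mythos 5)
Your proof is correct, but it takes a genuinely different route from the paper. The paper argues by compactness and contradiction: it normalizes a putative counterexample sequence with $\tms{C_n}{0}+\tm{B_n}{0}=1$ and right-hand side tending to zero, extracts weak $H^1$ limits, uses the Rellich and trace theorems to pass to strong $L^2$ limits, and concludes that the limits are constants solving the $2\times2$ linear system $C_0-f'(\b_0)B_0=0$, $\abs{\Sigma}_2C_0+\abs{\Omega}_3B_0=0$, whose only solution is trivial since $f'(\b_0)>0$ — contradicting the normalization. You instead give a direct, quantitative argument: Poincar\'e--Wirtinger on $\Sigma$ and $\Omega$ reduces everything to controlling the two averages, and you solve for them explicitly from the mass constraint together with $\int_\Sigma(C-f'(\b_0)B)$, the invertibility resting on exactly the same nonvanishing determinant $\abs{\Omega}_3+f'(\b_0)\abs{\Sigma}_2>0$. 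Each step of yours checks out (the trace of $B$ lies in $H^{1/2}(\Sigma)\subset L^2(\Sigma)$, so $\int_\Sigma(B-\bar B)$ is controlled by $\nm{\na B}_{0}$ via the trace and Poincar\'e--Wirtinger inequalities, and Cauchy--Schwarz handles the other integral). What your approach buys is an explicit constant in terms of $\abs{\Sigma}_2$, $\abs{\Omega}_3$, and $f'(\b_0)$, and independence from compactness; what the paper's approach buys is brevity and robustness, since the soft argument adapts immediately if the functional setting or the coupling term is perturbed.
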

\begin{proof}
Assume this claim is not true. Then we can find $\{C_n,B_n\}_{n=1}^{\infty}$ satisfying
\begin{eqnarray}
\int_{\Sigma}C_n+\int_{\Omega}B_n=0,
\end{eqnarray}
 such that
\begin{eqnarray}
\tms{C_n}{0}+\tm{B_n}{0}=1,
\end{eqnarray}
and
\begin{eqnarray}
\tms{\na_{\y} C_n}{0}+\tm{\na B_n}{0}+\tms{C_n-f'(\b_0)B_n}{0}\leq \frac{1}{n}.
\end{eqnarray}
This implies
\begin{eqnarray}
\tms{C_n}{1}\leq 2,\ \ \tm{B_n}{1}\leq 2.
\end{eqnarray}
Thus, we can extract weakly convergent subsequence
\begin{eqnarray}
C_n\rightharpoonup C_0\ \ \text{in}\ \ H^1(\Sigma),\\
B_n\rightharpoonup B_0\ \ \text{in}\ \ H^1(\Omega).
\end{eqnarray}
Also, by compactly embedding theorem and trace theorem, we know
\begin{eqnarray}
C_n\rt C_0\ \ \text{in}\ \ H^0(\Sigma),\\
B_n\rt B_0\ \ \text{in}\ \ H^0(\Omega),\\
B_n\rt B_0\ \ \text{in}\ \ H^0(\Sigma).
\end{eqnarray}
This implies
\begin{eqnarray}
\int_{\Sigma}C_0+\int_{\Omega}B_0=0,
\end{eqnarray}
\begin{eqnarray}\label{ct 02}
\tms{C_0}{0}+\tm{B_0}{0}=1,
\end{eqnarray}
and by weak lower semi-continuity,
\begin{eqnarray}\label{ct 03}
\tms{\na_{\y} C_0}{0}+\tm{\na B_0}{0}+\tms{C_0-f'(\b_0)B_0}{0}\leq 0.
\end{eqnarray}
Naturally, \eqref{ct 03} yields that $C_0$ and $B_0$ are constants satisfying
\begin{eqnarray}
C_0-f'(\b_0)B_0=0,\ \ \abs{\Sigma}_2 C_0+\abs{\Omega}_3 B_0=0
\end{eqnarray}
Since $f'(\b_0)>0$, the only solution to this linear system is $C_0=B_0=0$.
This contradicts the normalization condition \eqref{ct 02}.
Therefore, the claim is verified.
\end{proof}

\begin{lemma}\label{prelim lemma 2}
Assume $\en\leq\delta<<1$. We have
\begin{eqnarray}
\abs{\int_{\Sigma}\c}+\abs{\int_{\Omega}\b}&\ls& \sqrt{\bdi}+\sqrt{\en}\sqrt{\di}.
\end{eqnarray}
\end{lemma}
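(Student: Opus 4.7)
My plan is to extract $|\int_\Sigma \c|$ and $|\int_\Omega \b|$ by coupling the global mass conservation \eqref{mass} with the coercive information encoded in the $|\al|=0$ summand of $\bdi$. First, I will rewrite the conservation law as a single scalar identity. Writing $\tilde\c = \c_0 + \c$, $\tilde\b = \b_0 + \b$, changing variables via $\Phi$ (picking up the Jacobian $J$), and using the zero-mean condition $\int_\Sigma \e = 0$ which forces $\abs{\Omega(t)}_3 = \abs{\Omega}_3$, the identity \eqref{mass} collapses (after subtracting the equilibrium mass $\abs{\Sigma}_2 \c_0 + \abs{\Omega}_3 \b_0 = M$) to
\begin{equation}
\int_\Sigma \c + \int_\Omega \b = -\int_\Sigma \c\left(\sqrt{1+\abs{\na_{\y}\e}^2}-1\right) - \int_\Omega (J-1)\b - \c_0 \int_\Sigma \left(\sqrt{1+\abs{\na_{\y}\e}^2}-1\right).
\end{equation}
Every term on the right is quadratic in the small data. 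Using $\sqrt{1+x}-1 \le x/2$, integration by parts $\int_\Sigma \abs{\na_{\y}\e}^2 = -\int_\Sigma \e\,\de_{\y}\e$, the harmonic-extension bound $\nm{J-1}_{L^\infty(\Omega)} \ls \tms{\e}{3}$, and standard Sobolev embeddings, a routine interpolation argument (placing the lower-order factor such as $\tms{\e}{0}$ or $\tm{\b}{0}$ in $\en$, and the higher-order factor such as $\tms{\e}{2}$ or $\tm{\b}{3}$ in $\di$) bounds the right-hand side by $\sqrt{\en}\sqrt{\di}$.

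Next I read two coercive pieces off $\bdi$ at multi-index $\al = 0$: the surfactant-interaction term yields $\tms{\c - f'(\b_0)\b}{0}^2 \ls \bdi$, while the bulk-diffusion term yields $\tm{\na\b}{0}^2 \ls \bdi$. Cauchy--Schwarz converts the first into $|\int_\Sigma(\c - f'(\b_0)\b)| \ls \sqrt{\bdi}$. The second, together with Poincar\'e--Wirtinger on $\Omega$ and the trace theorem, gives $\nm{\b - \bar\b}_{H^0(\Sigma)} \ls \sqrt{\bdi}$ where $\bar\b = \abs{\Omega}_3^{-1}\int_\Omega \b$, hence
\begin{equation}
\int_\Sigma \b = \frac{\abs{\Sigma}_2}{\abs{\Omega}_3}\int_\Omega \b + O(\sqrt{\bdi}),
\end{equation}
which upgrades the linearized bound to $\left|\int_\Sigma \c - f'(\b_0)\frac{\abs{\Sigma}_2}{\abs{\Omega}_3}\int_\Omega \b\right| \ls \sqrt{\bdi}$.

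Combined with the mass identity, I obtain a $2\times 2$ linear system for the scalar unknowns $\int_\Sigma \c$ and $\int_\Omega \b$ whose coefficient determinant equals $-(1 + f'(\b_0)\abs{\Sigma}_2/\abs{\Omega}_3)$, strictly nonzero since $f'(\b_0) > 0$. Inverting this system yields $|\int_\Sigma \c| + |\int_\Omega \b| \ls \sqrt{\bdi} + \sqrt{\en}\sqrt{\di}$, which is the claim. The main subtlety will be the bookkeeping in the nonlinear estimate of the first step: to obtain a $\sqrt{\en}\sqrt{\di}$ tail rather than a mere $\en$ tail, I must be careful to split each quadratic geometric correction into one energy factor and one dissipation factor, since putting two factors in $\en$ would produce $\en$ on the right and destroy the absorbing argument that $\en \ls \di$ needed for exponential decay.
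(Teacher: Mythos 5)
Your proposal is correct, and after the shared first step it takes a genuinely different route from the paper. Both arguments begin identically: subtract the equilibrium mass from the conservation law \eqref{mass}, change variables to $\Omega$, and use $\int_\Sigma \e =0$ to exhibit $\int_\Sigma \c + \int_\Omega \b$ as a sum of quadratic geometric corrections bounded by $\sqrt{\en}\sqrt{\di}$ (the paper calls these $K_\c[\e]$ and $K_\b[\e]$). From there the paper absorbs those corrections into shifted functions $C=\c-K_\c[\e]/\abs{\Sigma}_2$, $B=\b-K_\b[\e]/\abs{\Omega}_3$ with $\int_\Sigma C+\int_\Omega B=0$, invokes the compactness-based Poincar\'e-type inequality of Lemma \ref{prelim lemma 1} to control the full norms $\tms{\c}{0}+\tm{\b}{0}$ by $\sqrt{\bdi}+\sqrt{\en}\sqrt{\di}$, and only then reads off the averages by Cauchy--Schwarz. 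You instead work directly with the two scalar unknowns $\int_\Sigma\c$ and $\int_\Omega\b$: the $\abs{\al}=0$ summands of $\bdi$ give $\tms{\c-f'(\b_0)\b}{0}^2\ls\bdi$ and $\tm{\na\b}{0}^2\ls\bdi$ (the coefficients are positive since $\sigma_0'<0$), Poincar\'e--Wirtinger plus the trace theorem convert $\int_\Sigma\b$ into $\tfrac{\abs{\Sigma}_2}{\abs{\Omega}_3}\int_\Omega\b+O(\sqrt{\bdi})$, and the resulting $2\times2$ system is nondegenerate precisely because $f'(\b_0)>0$. This is in effect a constructive, quantitative proof of the average component of Lemma \ref{prelim lemma 1}, with the nondegeneracy made explicit rather than hidden in a compactness contradiction; it proves exactly the stated lemma, which is all that the dissipation comparison in Theorem \ref{comparison theorem 2} actually uses. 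What it does not reproduce is the stronger byproduct of the paper's route, namely the full $L^2$ bound on $\c$ and $\b$ themselves; if one wanted that, one would append a Poincar\'e--Wirtinger step to your average bounds, which would then also reprove Lemma \ref{prelim lemma 1} without compactness. Your closing remark about splitting each quadratic correction into one energy factor and one dissipation factor is exactly the right bookkeeping and matches what the paper does for $K_\c[\e]$.
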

\begin{proof}
We decompose
\begin{eqnarray}
\int_{\Sigma}\c&=&\int_{\Sigma}\Big((\c_0+\c)-\c_0\Big)\\
&=&\bigg(\int_{\Sigma}(\c_0+\c)\sqrt{1+\abs{\na_{\y}\e}^2}-\abs{\Sigma}\c_0\bigg)+\int_{\Sigma}(\c_0+\c)\Big(1-\sqrt{1+\abs{\na_{\y}\e}^2}\Big),\no\\
\int_{\Omega}\b&=&\int_{\Omega}\Big((\b_0+\b)-\b_0\Big)\\
&=&\bigg(\int_{\Omega}J(\b_0+\b)-\abs{\Omega}_3 \b_0\bigg)+\int_{\Omega}(\b_0+\b)(1-J),\no
\end{eqnarray}
which, using the conservation of surfactant mass \eqref{mass}, implies
\begin{eqnarray}
\int_{\Sigma}\c+\int_{\Omega}\b&=&\bigg(\int_{\Sigma}(\c_0+\c)\sqrt{1+\abs{\na_{\y}\e}^2}+\int_{\Omega}J(\b_0+\b)-\abs{\Sigma}_2\c_0-\abs{\Omega}_3\b_0\bigg)\\
&&+\int_{\Sigma}(\c_0+\c)\Big(1-\sqrt{1+\abs{\na_{\y}\e}^2}\Big)+\int_{\Omega}(\b_0+\b)(1-J)\no\\
&=&\int_{\Sigma}(\c_0+\c)\Big(1-\sqrt{1+\abs{\na_{\y}\e}^2}\Big)+\int_{\Omega}(\b_0+\b)(1-J).\no
\end{eqnarray}
Let $K_{\c}[\e]=\ds\int_{\Sigma}(\c_0+\c)\Big(1-\sqrt{1+\abs{\na_{\y}\e}^2}\Big)$ and $K_{\b}[\e]=\ds\int_{\Omega}(\b_0+\b)(1-J)$. Since for $\en\leq\delta<<1$, Lemma \ref{nonlinear lemma 1} guarantees that
\begin{eqnarray}
\frac{\c_0}{2}\leq\c_0+\c\leq\frac{3\c_0}{2},\\
\frac{\b_0}{2}\leq\b_0+\b\leq\frac{3\b_0}{2}
\end{eqnarray}
for $\c_0$ and $\b_0$ the positive equilibrium constants, we have
\begin{eqnarray}
\abs{K_{\c}[\e]}&\ls& \tms{\na_{\y}\e}{0}^2\ls\tms{\e}{1}^2\ls \sqrt{\bdi}+\sqrt{\en}\sqrt{\di},\\
\abs{K_{\b}[\e]}&\ls& \tm{\be}{1}\ls\tms{\e}{\frac{1}{2}}\ls \sqrt{\bdi}+\sqrt{\en}\sqrt{\di}.
\end{eqnarray}
Define
\begin{eqnarray}
C=\c-\frac{K_{\c}[\e]}{\abs{\Sigma}_2},\ \ B=\b-\frac{K_{\b}[\e]}{\abs{\Omega}_3},
\end{eqnarray}
which yields
\begin{eqnarray}\label{ct 01}
\int_{\Sigma}C+\int_{\Omega}B=0.
\end{eqnarray}
Based on Lemma \ref{prelim lemma 1}, we have
\begin{eqnarray}
\tms{C}{0}+\tm{B}{0}\ls \tms{\na_{\y} C}{0}+\tm{\na B}{0}+\tms{C-f'(\b_0)B}{0}.
\end{eqnarray}
Then by the triangle inequality we know
\begin{eqnarray}
\tms{\c}{0}+\tm{\b}{0}&\ls& \tms{\na_{\y}\c}{0}+\tm{\na\b}{0}+\tms{\c-f'(\b_0)\b}{0}\\
&&+\tms{\frac{K_{\c}[\e]}{\abs{\Sigma}_2 }}{0}+\tm{\frac{K_{\b}[\e]}{\abs{\Omega}_3}}{0}+\tms{\frac{K_{\c}[\e]}{\abs{\Sigma}_2}-\frac{f'(\b_0)K_{\b}[\e]}{\abs{\Omega}_3}}{0}\no\\
&\ls&\sqrt{\bdi}+\abs{K_{\c}[\e]}+\abs{K_{\b}[\e]}\no,\\
&\ls&\sqrt{\bdi}+\sqrt{\en}\sqrt{\di}.\no
\end{eqnarray}
By Cauchy's inequality, our desired result naturally follows.
\end{proof}

\subsection{Energy comparison estimates}

Now we use various auxiliary estimates to control the full energy functional in terms of the horizontal energy.

\begin{theorem}\label{comparison theorem 1}
Suppose $\en\leq\delta<<1$. We have
\begin{eqnarray}
\en\ls\ben+\en^2.
\end{eqnarray}
\end{theorem}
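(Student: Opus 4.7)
The plan is to recover the full energy $\en$ by combining direct bounds coming from $\ben$ with elliptic regularity applied to \eqref{perturbation}, viewed in its linearized form \eqref{perturb equation 2} with $Z^i = G^i$. The horizontal energy directly controls all pure horizontal and temporal derivatives of order up to $2$ on $\Omega$ and $\Sigma$; in particular $\ben$ already supplies $\tms{\e}{3}^2$, $\tms{\dt\e}{1}^2$, $\tms{\c}{2}^2$, $\tms{\dt\c}{0}^2$, $\tm{\dt u}{0}^2$, $\tm{\dt\b}{0}^2$, the horizontal $H^2$-norms of $u$ and $\b$, and the $L^2$-norms of $\c$ and $\b$. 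Thus the task reduces to recovering $\tm{u}{2}^2$, $\tm{p}{1}^2$, $\tm{\b}{2}^2$, $\tms{\dt\e}{\frac{3}{2}}^2$ and $\tms{\dt^2\e}{-\frac{1}{2}}^2$ from the equations. All nonlinear remainders will be controlled via Lemmas \ref{nonlinear lemma 1} and \ref{nonlinear lemma 3} and contribute only $\en^2$-type errors.

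First I would apply Stokes regularity to the time-frozen problem for $(u,p)$, treating $\dt u$ as forcing: from \eqref{perturb equation 2} with $(v,q,\xi,h,d) = (u,p,\e,\c,\b)$ we have $-\de u + \na p = -\dt u + G^1$, $\na \cdot u = G^2$ in $\Omega$, with $(pI - \dd u) e_3 = \e e_3 - \sigma_0 \de_{\y} \e\, e_3 - \sigma_0'\na_{\y} \c + G^3$ on $\Sigma$ and $u=0$ on $\Sigma_b$. Standard regularity theory for the Stokes system with mixed boundary conditions yields
\begin{equation*}
\tm{u}{2}^2 + \tm{p}{1}^2 \ls \tm{\dt u}{0}^2 + \tm{G^1}{0}^2 + \tm{G^2}{1}^2 + \tms{\e}{\frac{5}{2}}^2 + \tms{\c}{\frac{3}{2}}^2 + \tms{G^3}{\frac{1}{2}}^2.
\end{equation*}
The first three terms are bounded by $\ben + \en^2$ using \eqref{ct 09} of Lemma \ref{nonlinear lemma 3}; the next two satisfy $\tms{\e}{\frac{5}{2}}^2 \le \tms{\e}{3}^2$ and $\tms{\c}{\frac{3}{2}}^2 \le \tms{\c}{2}^2$ and thus lie in $\ben$; the last is $\ls \en^2$ by Lemma \ref{nonlinear lemma 3}. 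Hence $\tm{u}{2}^2 + \tm{p}{1}^2 \ls \ben + \en^2$.

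Second, for the bulk surfactant I would invoke standard elliptic regularity for the Neumann problem $\beta \de \b = \dt \b - G^6$ in $\Omega$, $\p_3 \b = \omega_{\c0}\c + \omega_{\b0}\b + G^7$ on $\Sigma$, $\p_3 \b = 0$ on $\Sigma_b$, giving
\begin{equation*}
\tm{\b}{2}^2 \ls \tm{\dt \b}{0}^2 + \tm{G^6}{0}^2 + \tms{\c}{\frac{1}{2}}^2 + \tms{\b}{\frac{1}{2}}^2 + \tms{G^7}{\frac{1}{2}}^2 + \tm{\b}{0}^2.
\end{equation*}
Using trace and interpolation, $\tms{\b}{\frac{1}{2}}^2 \ls \tm{\b}{1}^2 \le \epsilon \tm{\b}{2}^2 + C_\epsilon \tm{\b}{0}^2$, so the $\b$-trace is absorbed on the left; $\tms{\c}{\frac{1}{2}}^2$ and $\tm{\b}{0}^2$ are in $\ben$, the $G$ terms are $\ls \en^2$ by Lemma \ref{nonlinear lemma 3}, and $\tm{\dt\b}{0}^2$ is in $\ben$. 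Therefore $\tm{\b}{2}^2 \ls \ben + \en^2$.

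Finally, the fractional time-derivative norms of $\e$ come from the kinematic condition $\dt\e = u\cdot\n$: trace and product estimates give $\tms{\dt\e}{\frac{3}{2}}^2 \ls \tm{u}{2}^2 + \en^2 \ls \ben + \en^2$, and differentiating yields $\dt^2\e = \dt u\cdot\n + u\cdot\dt\n$ with $\tms{\dt^2\e}{-\frac{1}{2}}^2 \ls \tm{\dt u}{0}^2 + \en^2$ via the duality estimate $\|v\cdot\n\|_{H^{-\frac{1}{2}}(\Sigma)} \ls \|v\|_{L^2(\Omega)}$, proved by testing against an $H^1(\Omega)$-extension of an arbitrary $H^{\frac{1}{2}}(\Sigma)$ function. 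The main obstacle lies in orchestrating these elliptic estimates so that the coupled boundary data and the various $G^i$-remainders all land in the $\ben + \en^2$ class; a secondary delicacy is the negative-order trace of $\dt u$, whose rigorous meaning is ensured by the local-existence framework of Theorem \ref{lwp_intro}.
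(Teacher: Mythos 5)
Your proposal is correct and follows essentially the same route as the paper: the terms $\tm{u}{2}^2$, $\tm{p}{1}^2$, $\tms{\dt\e}{\frac{3}{2}}^2$, $\tms{\dt^2\e}{-\frac{1}{2}}^2$ are handled by the Stokes and kinematic/duality arguments of \cite[Theorem 5.2]{Kim.Tice2016} (which the paper simply cites), and the genuinely new term $\tm{\b}{2}^2$ is recovered from the Neumann problem \eqref{elliptic.} via Lemma \ref{elliptic}, with the nonlinearities disposed of by \eqref{ct 09}. The one small difference is how the trace $\tms{\b}{\frac{1}{2}}$ in the Neumann data is treated: you absorb it in one shot via $\tms{\b}{\frac{1}{2}}^2\ls\tm{\b}{1}^2\le\epsilon\tm{\b}{2}^2+C_\epsilon\tm{\b}{0}^2$, while the paper bootstraps, first proving an $H^1$ bound (rewriting the Neumann data through the surface surfactant equation) and then inserting it into the $H^2$ estimate; both are valid, and the absorption argument is marginally more direct.
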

\begin{proof}
It suffices to prove
\begin{eqnarray}
\tm{u}{2}^2+\tm{p}{1}^2+\tms{\dt\e}{\frac{3}{2}}^2+\tms{\dt^2\e}{-\frac{1}{2}}^2+\tm{\b}{2}^2\ls\ben+\en^2.
\end{eqnarray}
The previous four terms can be estimated as \cite[Theorem 5.2]{Kim.Tice2016}, so we focus on the last term.
Now, according to \eqref{perturbation}, we have
\begin{eqnarray}\label{elliptic.}
\left\{
\begin{array}{ll}
-\beta\de\b=-\dt\b+G^6&\ \ \text{in}\ \ \Omega,\\
\p_3\b=\omega_{\c0}\c+\omega_{\b0}\b+G^7&\ \ \text{on}\ \ \Sigma,\\
\p_3\b=0&\ \ \text{on}\ \ \Sigma_b,
\end{array}
\right.
\end{eqnarray}
Based on elliptic estimate in Lemma \ref{elliptic}, we have
\begin{eqnarray}
\tm{b-\dfrac{1}{\abs{\Omega}_3}\int_{\Omega}b}{1}
&\ls&\tm{\dt\b}{0}+\tm{G^6}{0}+\tms{\dt\c+\c_0\na_{\y}\cdot u-\de_{\y}\c-G^5}{0}+\tms{G^7}{0}\\
&\ls&\tm{\dt\b}{0}+\tm{G^6}{0}+\tms{\dt\c}{0}+\tms{u}{1}+\tms{\c}{2}\no\\
&&+\tms{G^5}{0}+\tms{G^7}{0}\no\\
&\ls&\sqrt{\ben}+\en.\no
\end{eqnarray}
Cauchy's inequality implies
\begin{eqnarray}
\abs{\int_{\Omega}b}\ls \tm{b}{0}\ls\sqrt{\ben},
\end{eqnarray}
which further yields
\begin{eqnarray}
\tm{\b}{1}&\ls&\abs{\int_{\Omega}b}+\tm{b-\dfrac{1}{\abs{\Omega}_3}\int_{\Omega}b}{1}\ls\sqrt{\ben}+\en\ls \sqrt{\ben}+\en.
\end{eqnarray}
Further, we have
\begin{eqnarray}
\tm{b-\dfrac{1}{\abs{\Omega}_3}\ds\int_{\Omega}b}{2}&\ls&\tm{\dt\b}{0}+\tm{G^6}{0}+\tms{\omega_{\c0}\c+\omega_{\b0}\b+G^7}{\frac{1}{2}}\\
&\ls&\tm{\dt\b}{0}+\tm{G^6}{0}+\tms{\c}{\frac{1}{2}}+\tm{\b}{1}+\tms{G^7}{\frac{1}{2}}\no\\
&\ls&\sqrt{\ben}+\en.\no
\end{eqnarray}
Hence, we know
\begin{eqnarray}
\tm{\b}{2}\ls \abs{\int_{\Omega}b}+\sqrt{\ben}+\en\ls \sqrt{\ben}+\en.
\end{eqnarray}
\end{proof}

\subsection{Dissipation comparison estimates}

Next we control the full dissipation in terms of the horizontal dissipation.

\begin{theorem}\label{comparison theorem 2}
Suppose $\en\leq\delta<<1$. We have
\begin{eqnarray}
\di\ls\bdi+\en\di.
\end{eqnarray}
\end{theorem}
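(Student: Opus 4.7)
The plan is to follow the structure of the dissipation comparison argument in \cite[Theorem 5.3]{Kim.Tice2016}, which handles the non-surfactant fluid terms, and then to add new ingredients that treat $\c$, $\b$, and their temporal derivatives. The overall strategy is: the horizontal dissipation $\bdi$ directly controls purely horizontal (and temporal) derivatives of $u$, $\b$, and $\c$; the remaining ``vertical'' derivatives are then recovered by elliptic regularity applied to the linear perturbed system \eqref{perturb equation 2}, with the nonlinear forcing terms controlled via Lemma \ref{nonlinear lemma 3} by $\sqrt{\en}\sqrt{\di}$.

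First I would dispose of the $L^2$--level information on the surfactants. Combining Lemma \ref{prelim lemma 2} with the cross term $\int_\Sigma (\p^\al \c - f'(\b_0) \p^\al \b)^2$ and the gradient terms already sitting inside $\bdi$ yields
\begin{equation*}
\tms{\c}{1}^2 + \tm{\b}{1}^2 \ls \bdi + \sqrt{\en}\sqrt{\di}\cdot\sqrt{\di}.
\end{equation*}
Korn's inequality applied to each $\p^\al u$ with $\al_3 = 0$, $\abs{\al}\le 2$ (using $u|_{\Sigma_b}=0$) then controls all mixed temporal/horizontal derivatives of $u$ of total order $\le 2$ in $H^1$ by $\bdi$.

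Next I would upgrade to full vertical regularity for the Stokes block. Using the linearized form \eqref{perturb equation 2} for the original unknowns, elliptic regularity for the stationary Stokes system with Dirichlet--Neumann data gives
\begin{equation*}
\tm{u}{3}^2 + \tm{p}{2}^2 \ls \tm{\dt u}{1}^2 + \tm{G^1}{1}^2 + \tm{G^2}{2}^2 + \tms{\e - \sigma_0 \de_{\y}\e - \sigma_0'\na_{\y}\c + G^3}{\frac{3}{2}}^2.
\end{equation*}
The $\tm{\dt u}{1}$, $\tms{\e}{\frac{7}{2}}$ and $\tms{\c}{\frac{5}{2}}$ pieces on the right are either controlled by the horizontal dissipation or produced by a subsequent surface elliptic argument: reading the normal component of the stress condition as $\e - \sigma_0\de_{\y}\e = (pI-\dd u)e_3\cdot e_3 - G^3\cdot e_3$, a $\tfrac{3}{2}$--order surface elliptic estimate promotes the horizontal bound $\tms{\e}{3}$ obtained from $\bdi$ to $\tms{\e}{\frac{7}{2}}$. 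All $G$--terms are handled by Lemma \ref{nonlinear lemma 3}, giving a contribution $\ls \sqrt{\en}\sqrt{\di}$.

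For the bulk surfactant I would apply Lemma \ref{elliptic} to the Neumann problem
\begin{equation*}
-\beta\de\b = -\dt\b + G^6 \text{ in }\Omega,\quad \p_3\b = \omega_{\c0}\c + \omega_{\b0}\b + G^7 \text{ on }\Sigma, \quad \p_3\b = 0 \text{ on }\Sigma_b,
\end{equation*}
which produces $\tm{\b}{3}$ from $\tm{\dt\b}{1}$, $\tms{\c}{\frac{3}{2}}$, $\tm{\b}{2}$ and the $G$--norms, closed using the previous controls. The surface surfactant equation then gives $\tms{\c}{3}$ via $\p_3\b|_\Sigma$, $\tm{u}{3}$, and $\tms{\dt\c}{1}$. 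The time--differentiated quantities ($\dt u, \dt p, \dt\b, \dt\c, \dt\e, \dt^2\e$) are recovered by the same elliptic machinery applied to the $\dt$--differentiated systems together with the kinematic boundary condition $\dt\e = u\cdot\n$, which converts control of $u, \dt u$ on $\Sigma$ into control of $\dt\e, \dt^2\e$. The main obstacle is bookkeeping: each elliptic call produces nonlinear remainders that must absorb uniformly into $\sqrt{\en}\sqrt{\di}$, and in particular the Taylor--remainder bound $\tms{G^7}{\frac{3}{2}} \ls \sqrt{\en}\sqrt{\di}$ from Lemma \ref{nonlinear lemma 3}, which relies on the linearization \eqref{linearize} of $\omega$, is indispensable for closing the surfactant coupling between $\b$ and $\c$.
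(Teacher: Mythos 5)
Your overall strategy matches the paper's: the fluid and free-surface terms are deferred to \cite[Theorem 5.3]{Kim.Tice2016}, the surfactant averages are controlled by Lemma \ref{prelim lemma 2}, the vertical regularity of $\b$ is recovered by bootstrapping the Neumann problem through Lemma \ref{elliptic}, and the nonlinear remainders (in particular $G^7$) are absorbed via Lemma \ref{nonlinear lemma 3}. The one substantive difference is your treatment of $\tms{\c}{3}$: you propose to obtain it from surface elliptic regularity for $\gamma\de_{\y}\c=\dt\c+\c_0\na_{\y}\cdot u+\beta\p_3\b-G^5$, which requires the trace $\p_3\b|_{\Sigma}\in H^1(\Sigma)$ and hence $\tm{\b}{3}$ first; but your elliptic call for $\tm{\b}{3}$ needs the boundary datum $\omega_{\c0}\c+\omega_{\b0}\b+G^7$ in $H^{3/2}(\Sigma)$, i.e.\ $\tms{\c}{3/2}$, which your first step only supplies at the $H^1$ level. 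This circularity is resolvable by interleaving the two elliptic estimates half a derivative at a time, but it is unnecessary: since $\c$ lives on $\Sigma$, \emph{all} of its spatial derivatives are horizontal, so $\tms{\na_{\y}\c}{2}^2\ls\bdi$ is already part of the horizontal dissipation, and the paper gets $\tms{\c}{3}\ls\tms{\na_{\y}\c}{2}+\abs{\int_{\Sigma}\c}\ls\sqrt{\bdi}+\sqrt{\en}\sqrt{\di}$ at the outset by Poincar\'e--Wirtinger, after which the $\b$ bootstrap proceeds linearly. You should also make explicit how the averages $\abs{\int_{\Sigma}\dt\c}$ and $\abs{\int_{\Omega}\dt\b}$ are controlled (the paper integrates the respective equations over $\Sigma$ and $\Omega$, using that $\omega_{\c0}\c+\omega_{\b0}\b$ is a multiple of $\c-f'(\b_0)\b$, which sits inside $\bdi$); elliptic machinery alone does not produce these zero modes.
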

\begin{proof}
In the full dissipation
\begin{eqnarray}
\di&=&\tm{u}{3}^2+\tm{\dt u}{1}^2+\tm{p}{2}^2+\tms{\e}{\frac{7}{2}}^2+\tms{\dt\e}{\frac{5}{2}}^2+\tms{\dt^2\e}{\frac{1}{2}}^2\\
&&+\tm{\b}{3}^2+\tm{\dt\b}{1}^2+\tms{\c}{3}^2+\tms{\dt\c}{1}^2,\no
\end{eqnarray}
the estimates of $(u,p,\e)$ can be proved as \cite[Theorem 5.3]{Kim.Tice2016}, so
it suffices to show
\begin{eqnarray}
\tms{\c}{3}+\tms{\dt\c}{1}+\tm{\b}{3}^2+\tm{\dt\b}{1}^2\ls\sqrt{\bdi}+\sqrt{\en}\sqrt{\di}.
\end{eqnarray}
Based on Lemma \ref{prelim lemma 2} and Poincar\'e-Wirtinger inequality, we have
\begin{eqnarray}
\tms{\c}{3}\ls \tms{\na_{\y}\c}{2}+\abs{\int_{\Sigma}\c}\ls\sqrt{\bdi}+\sqrt{\en}\sqrt{\di}.
\end{eqnarray}
Also, we know
\begin{eqnarray}
\abs{\int_{\Sigma}\dt\c}&\ls&\tms{G^5}{0}+\tms{\omega_{\c0}\c+\omega_{\b0}\b}{0}\ls \sqrt{\bdi}+\sqrt{\en}\sqrt{\di},
\end{eqnarray}
since $\omega_{\c0}\c+\omega_{\b0}\b=C_0\Big(\c-f'(\b_0)\b\Big)$ for some constant $C_0>0$. Hence, by Poincar\'e-Wirtinger inequality,
\begin{eqnarray}
\tms{\dt\c}{1}\ls \tms{\na_{\y}\dt\c}{0}+\abs{\int_{\Sigma}\dt\c}\ls\sqrt{\bdi}+\sqrt{\en}\sqrt{\di}.
\end{eqnarray}
Then we need to reconsider the elliptic system \eqref{elliptic.} and try to bound by dissipation.
Based on elliptic estimate in Lemma \ref{elliptic}, we have
\begin{eqnarray}
\tm{b-\dfrac{1}{\abs{\Omega}_3}\int_{\Omega}b}{1}&\ls&\tm{\dt\b}{0}+\tm{G^6}{0}+\tms{\omega_{\c0}\c+\omega_{\b0}\b}{0}+\tms{G^7}{0}\\
&\ls&\sqrt{\bdi}+\sqrt{\en}\sqrt{\di}.\no
\end{eqnarray}
Combining this with Lemma \ref{prelim lemma 2}, we have
\begin{eqnarray}
\tm{b}{1}\ls \abs{\int_{\Omega}b}+\tm{b-\dfrac{1}{\abs{\Omega}_3}\int_{\Omega}b}{1}\ls \sqrt{\bdi}+\sqrt{\en}\sqrt{\di}.
\end{eqnarray}
Further, we have
\begin{eqnarray}
\tm{b-\dfrac{1}{\abs{\Omega}_3}\int_{\Omega}b}{2}&\ls&\tm{\dt\b}{0}+\tm{G^6}{0}+\tms{\omega_{\c0}\c+\omega_{\b0}\b+G^7}{\frac{1}{2}}\\
&\ls&\tm{\dt\b}{0}+\tm{G^6}{0}+\tms{\c}{\frac{1}{2}}+\tm{\b}{1}+\tms{G^7}{\frac{1}{2}}\no\\
&\ls&\sqrt{\bdi}+\sqrt{\en}\sqrt{\di},\no
\end{eqnarray}
Similarly, we obtain
\begin{eqnarray}
\tm{b}{2}\ls \abs{\int_{\Omega}b}+\tm{b-\dfrac{1}{\abs{\Omega}_3}\int_{\Omega}b}{2}\ls \sqrt{\bdi}+\sqrt{\en}\sqrt{\di}.
\end{eqnarray}
On the other hand, by Cauchy's inequality, we know
\begin{eqnarray}
\abs{\int_{\Omega}\dt b}&\ls&\tm{G^6}{0}+\tms{\omega_{\c0}\c+\omega_{\b0}\b+G^7}{0}\\
&\ls&\tm{G^6}{0}+\tms{\omega_{\c0}\c+\omega_{\b0}\b}{0}+\tms{G^7}{0}\ls\sqrt{\bdi}+\sqrt{\en}\sqrt{\di}.\no
\end{eqnarray}
By the Poincar\'e-Wirtinger inequality, this implies
\begin{eqnarray}
\tm{\dt\b}{1}&\ls&\tm{\na\dt\b}{0}+\abs{\int_{\Omega}\dt b}\ls \sqrt{\bdi}+\sqrt{\en}\sqrt{\di}.
\end{eqnarray}
Therefore, we continue using elliptic estimate to obtain
\begin{eqnarray}
\tm{b-\dfrac{1}{\abs{\Omega}_3}\int_{\Omega}b}{3}&\ls&\tm{\dt\b}{1}+\tm{G^6}{1}+\tms{\omega_{\c0}\c+\omega_{\b0}\b+G^7}{\frac{3}{2}}\\
&\ls&\tm{\dt\b}{1}+\tm{G^6}{1}+\tms{\c}{\frac{3}{2}}+\tm{\b}{2}+\tms{G^7}{\frac{3}{2}}\no\\
&\ls&\sqrt{\bdi}+\sqrt{\en}\sqrt{\di}.\no
\end{eqnarray}
Naturally, we have
\begin{eqnarray}
\tm{b}{3}\ls \abs{\int_{\Omega}b}+\tm{b-\dfrac{1}{\abs{\Omega}_3}\int_{\Omega}b}{3}\ls \sqrt{\bdi}+\sqrt{\en}\sqrt{\di}.
\end{eqnarray}
In summary, we have shown the desired estimate
\begin{eqnarray}
\tm{\dt\b}{1}+\tm{b}{3}&\ls&\sqrt{\bdi}+\sqrt{\en}\sqrt{\di}.
\end{eqnarray}
\end{proof}

\section{Proof of main results}

We now synthesize the previous results to prove the main a priori estimates.

\begin{theorem}\label{apriori_est}
Suppose $(u,p,\e,\c,\b)$ are solution to \eqref{perturbation} in $t\in[0,T]$. Then there exists a universal $0<\delta<<1$ such that for any
\begin{eqnarray}
\sup_{t\in[0,T]}\en(t)\leq\delta,\ \ \int_0^T\di(t)\ud{t}<\infty,
\end{eqnarray}
we have
\begin{eqnarray}
\sup_{t\in[0,T]}\ue^{\lambda t}\en(t)+\int_0^T\di(t)\ud{t}\ls\en(0),
\end{eqnarray}
for some universal $\lambda>0$.
\end{theorem}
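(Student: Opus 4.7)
My plan is to derive, from the pieces already assembled, a first-order differential inequality of the form $\frac{\ud{}}{\ud{t}}(\bar{\en}-\f)+\lambda(\bar{\en}-\f)\le 0$, which yields exponential decay by Gr\"onwall. The three essential ingredients are (i) the horizontal energy-dissipation inequality of Theorem \ref{energy theorem}, (ii) the comparison estimates $\en \ls \bar{\en}+\en^2$ and $\di \ls \bar{\di}+\en\di$ from Theorems \ref{comparison theorem 1} and \ref{comparison theorem 2}, and (iii) a coercivity estimate $\bar{\en}\ls \bar{\di}$.

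\textbf{Absorption of perturbative terms.} For $\en \le \delta$ with $\delta$ sufficiently small, the two comparison theorems rearrange to $\en \le C\bar{\en}$ and $\di \le C\bar{\di}$. Direct inspection of the definition of $\f$, together with \eqref{ct 05} and the pointwise bound $\abs{J-1}\ls \sqrt{\en}$ from Lemma \ref{nonlinear lemma 1}, gives $\abs{\f} \ls \en^{3/2} \ls \sqrt{\delta}\,\bar{\en}$, so $\bar{\en}-\f$ is comparable to $\bar{\en}$. Plugging $\di \le C\bar{\di}$ into Theorem \ref{energy theorem} and taking $\delta$ small,
\[
\frac{\ud{}}{\ud{t}}(\bar{\en}-\f) + \bar{\di} \ls \sqrt{\en}\,\di \ls \sqrt{\delta}\,\bar{\di},
\]
so the right-hand side is absorbed into the left to produce $\frac{\ud{}}{\ud{t}}(\bar{\en}-\f) + \frac{1}{2}\bar{\di} \le 0$.

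\textbf{Coercivity $\bar{\en}\ls \bar{\di}$, the main obstacle.} For the velocity summands $\nm{\p^{\al}u}_{L^2}^2$ in $\bar{\en}$, Korn's inequality on $\Omega$ with $u|_{\Sigma_b}=0$ furnishes control by $\nm{\dd\p^{\al}u}_{L^2}^2$, which lies in $\bar{\di}$. For the surfactant summands, Lemma \ref{prelim lemma 1} combined with Lemma \ref{prelim lemma 2} bounds $\tms{\p^{\al}c}{0}^2+\tm{\p^{\al}b}{0}^2$ by $\tms{\na_\y\p^{\al}c}{0}^2+\tm{\na\p^{\al}b}{0}^2 + \tms{\p^{\al}c-f'(b_0)\p^{\al}b}{0}^2$, modulo mass-averaging errors of size $\sqrt{\bar{\di}} + \sqrt{\en}\sqrt{\di}$ that are again absorbable. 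The delicate piece is recovering the $\eta$-norms $\tms{\p^{\al}\eta}{0}^2+\sigma_0\tms{\na_\y\p^{\al}\eta}{0}^2$ from $\bar{\di}$, since the dissipation has no direct $\eta$-term: following \cite{Kim.Tice2016}, one tests the stress boundary condition (third line of \eqref{perturbation}) against a suitable Stokes-type extension of $\p^{\al}\eta$ to transfer those norms onto pressure, velocity, and surfactant quantities that already sit in $\bar{\di}$ (up to nonlinear remainders that are $O(\sqrt{\en}\sqrt{\di})$ by Lemma \ref{nonlinear lemma 3}). This step is where the analysis is genuinely delicate and consumes most of the technical work.

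\textbf{Conclusion.} Combining coercivity with the absorbed inequality gives $\frac{\ud{}}{\ud{t}}(\bar{\en}-\f) + \lambda(\bar{\en}-\f) \le 0$ for some universal $\lambda>0$; Gr\"onwall yields $(\bar{\en}-\f)(t) \le \ue^{-\lambda t}(\bar{\en}-\f)(0)$. Using $\bar{\en}\sim \bar{\en}-\f$ and Theorem \ref{comparison theorem 1}, one obtains $\ue^{\lambda t}\en(t) \ls \en(0)$. Finally, integrating $\frac{\ud{}}{\ud{t}}(\bar{\en}-\f)+\frac{1}{2}\bar{\di}\le 0$ from $0$ to $T$ yields $\int_0^T\bar{\di}\,\ud{t} \ls \en(0)$, and the dissipation comparison transfers this bound to $\int_0^T\di\,\ud{t}$, completing the proof. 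Since none of the constants in the steps above depend on $T$, the threshold $\delta$ and the rate $\lambda$ are universal.
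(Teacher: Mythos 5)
Your proposal is correct and its overall skeleton (Gr\"onwall applied to $\ben-\f$, absorption of $\sqrt{\en}\di$ via the comparison estimates, equivalence $\ben-\f\sim\ben$ from the bound on $\f$) coincides with the paper's. The one place where you diverge is the coercivity step, and there you make the argument considerably heavier than it needs to be. You propose to prove $\ben\ls\bdi$ from scratch, term by term, and you correctly identify the recovery of $\tms{\p^{\al}\e}{0}^2+\sigma_0\tms{\na_{\y}\p^{\al}\e}{0}^2$ as the delicate point, sketching a Stokes-extension/boundary-condition argument. But that work has already been done: it is exactly the content of Theorem \ref{comparison theorem 2} (where the $(u,p,\e)$ dissipation bounds are obtained following \cite{Kim.Tice2016}). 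The paper instead gets coercivity for free by chaining
\begin{equation*}
\ben-\f\ls\ben\ls\en\ls\di\ls\bdi,
\end{equation*}
where $\en\ls\di$ is immediate from the definitions \eqref{def_E}--\eqref{def_D} (every summand of $\en$ is a strictly lower-order Sobolev norm of the corresponding summand of $\di$, e.g. $\tms{\e}{3}^2\le\tms{\e}{7/2}^2$), and the outer inequalities are Theorems \ref{comparison theorem 1} and \ref{comparison theorem 2} together with smallness of $\en$. So your route buys nothing over the paper's and forces you to re-derive (in a sketch that, as written, would need to be fleshed out) the $\eta$-control already packaged in the dissipation comparison; if you simply invoke the trivial inequality $\en\ls\di$ at that point, your proof collapses to the paper's. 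The remaining steps --- the bound $\abs{\f}\ls\en^{3/2}$, the integration in time for $\int_0^T\di$, and the transfer of decay from $\ben-\f$ back to $\en$ --- all match the paper and are carried out correctly.
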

\begin{proof}
Theorem \ref{comparison theorem 1} and Theorem \ref{comparison theorem 2} justify
\begin{eqnarray}\label{apriori_est_1}
\ben\ls\en\ls\ben,\ \ \bdi\ls\di\ls\bdi.
\end{eqnarray}
Theorem \ref{energy theorem} justifies
\begin{eqnarray}\label{ft 01}
\frac{\ud{}}{\ud{t}}(\ben-\f)+\bdi\ls0.
\end{eqnarray}
Also, Lemmas \ref{nonlinear lemma 1} and \ref{nonlinear lemma 2} together with \eqref{apriori_est_1} show that
\begin{eqnarray}
\abs{\f}\leq \frac{\ben}{4}+C\en^{\frac{3}{2}}=\bigg(\frac{1}{4}+C\sqrt{\en}\bigg)\ben,
\end{eqnarray}
which implies that for $\delta$ smaller than a universal constant
\begin{eqnarray}
\frac{\ben}{2}\leq \ben-\f\leq \frac{3\ben}{2}.
\end{eqnarray}
Hence, integrating over $[0,T]$ in \eqref{ft 01}, we have
\begin{eqnarray}
\int_0^T\di(t)\ud{t}\ls \Big(\ben(T)-\f(T)\Big)+\int_0^T\di(t)\ud{t}\ls\ben(0)+\f(0),
\end{eqnarray}
which further implies
\begin{eqnarray}
\int_0^T\di(t)\ud{t}\ls \en(0).
\end{eqnarray}
Also, it holds naturally that
\begin{eqnarray}
\ben-\f\ls\ben\ls\di,
\end{eqnarray}
which means there exists $\lambda>0$ such that
\begin{eqnarray}
\frac{\ud{}}{\ud{t}}(\ben-\f)+\lambda(\ben-\f)\leq0.
\end{eqnarray}
Therefore, this implies
\begin{eqnarray}
\ben(t)\ls\ben(t)-\f(t) \ls \ue^{-\lambda t}\Big(\ben(0)-\f(0)\Big)\ls\ue^{-\lambda t}\en(0),
\end{eqnarray}
which yields our desired decay estimate.
\end{proof}

\appendix

\makeatletter
\renewcommand \theequation {%
A.%
\@arabic\c@equation} \@addtoreset{equation}{section}
\@addtoreset{equation}{section} \makeatother

\section{Surface Differential Operators}

The proof of following lemmas can be found in \cite[Appendix A]{Kim.Tice2016}.
\begin{lemma}\label{app lemma 1}
We have the following identities:
\begin{eqnarray}
\na_{\Gamma}\cdot\nu=\na_{\y}\nu_{\y}=-H,
\end{eqnarray}
\begin{eqnarray}
\p_i\sqrt{1+\abs{\na_{\y}\e}^2}=-\nu_{\y}\cdot\na_{\y}\p_i\e,
\end{eqnarray}
\begin{eqnarray}
\dt\sqrt{1+\abs{\na_{\y}\e}^2}=\na_{\y}\left(\frac{\dt\e\na_{\y}\e}{\sqrt{1+\abs{\na_{\y}\e}^2}}\right)-\dt\e H,
\end{eqnarray}
\begin{eqnarray}
\na_{\Gamma}f\cdot\nu=0.
\end{eqnarray}
\end{lemma}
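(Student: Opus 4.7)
The plan is to verify each of the four identities by direct computation from the explicit formulas given earlier in the paper for $\nu$, $\nu_{\y}$, $H$, and the tangential operators $\p_{\Gamma,i}$. These are pointwise differential-geometric identities for the graph $\Gamma = \{x_3=\e\}$ and no PDE structure is needed; the lemma is already established in \cite[Appendix A]{Kim.Tice2016} and the argument amounts to careful bookkeeping.

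I would first dispatch the two chain-rule identities. For the second identity, differentiating inside the square root gives
\begin{equation*}
\p_i\sqrt{1+\abs{\na_{\y}\e}^2} = \frac{\na_{\y}\e \cdot \na_{\y}\p_i\e}{\sqrt{1+\abs{\na_{\y}\e}^2}},
\end{equation*}
which equals $-\nu_{\y}\cdot\na_{\y}\p_i\e$ since $\nu_{\y} = -\na_{\y}\e/\sqrt{1+\abs{\na_{\y}\e}^2}$. For the third identity, the same chain rule gives $\dt\sqrt{1+\abs{\na_{\y}\e}^2} = \na_{\y}\e\cdot\na_{\y}\dt\e / \sqrt{1+\abs{\na_{\y}\e}^2}$, and the product rule applied to $\na_{\y}\cdot\bigl(\dt\e\,\na_{\y}\e/\sqrt{1+\abs{\na_{\y}\e}^2}\bigr)$ splits into exactly this expression plus $\dt\e\cdot H$, directly by the definition of $H$ in the introduction. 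Rearranging delivers the claim.

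Next, for the first identity, the equality $\na_{\y}\cdot\nu_{\y}=-H$ is immediate from the definitions, since $\nu_{\y}$ is exactly the negative of the field whose horizontal divergence defines $H$. The equality $\na_{\Gamma}\cdot\nu=\na_{\y}\cdot\nu_{\y}$ requires substituting the explicit formulas for $\p_{\Gamma,i}$ into $\sum_{i=1}^{3}\p_{\Gamma,i}\nu_i$ with $\nu=(-\p_1\e,-\p_2\e,1)/\sqrt{W}$ and $W := 1+\abs{\na_{\y}\e}^2$, then collecting terms; the cancellations reduce $\na_{\Gamma}\cdot\nu$ to $\na_{\y}\cdot\nu_{\y}$. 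For the fourth identity $\na_{\Gamma}f\cdot\nu=0$, the same substitution shows that, after multiplying by $\sqrt{W}$, the coefficients of $\p_1 f$ and of $\p_2 f$ each take the form $\p_j\e\bigl[-(1+\abs{\p_k\e}^2)+(\p_k\e)^2+1\bigr]/W$ with $\{j,k\}=\{1,2\}$, which vanishes identically.

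None of the computations are deep, so the only real obstacle is the algebraic bookkeeping in expanding the $\p_{\Gamma,i}$ for the first identity. I would handle this by expanding one tangential derivative at a time, pairing each contribution with its partner from the formula for $\nu$, and verifying the cancellations term by term; alternatively one may simply cite \cite[Appendix A]{Kim.Tice2016} where this is already carried out in full.
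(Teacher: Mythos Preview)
Your proposal is correct and matches the paper's treatment: the paper does not give an independent proof but simply cites \cite[Appendix A]{Kim.Tice2016}, and the direct chain-rule and substitution computations you sketch are exactly what that reference carries out.
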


\begin{lemma}\label{app lemma 2}
We have the following identities:
\begin{eqnarray}
\int_{\Sigma}\p_{\Gamma,i}fg\sqrt{1+\abs{\na_{\y}\e}^2}=-\int_{\Sigma}(f\p_{\Gamma,i}g+fg\nu_iH)\sqrt{1+\abs{\na_{\y}\e}^2},
\end{eqnarray}
\begin{eqnarray}
\int_{\Sigma}\na_{\Gamma}\cdot X\sqrt{1+\abs{\na_{\y}\e}^2}=-\int_{\Sigma}(X\cdot\nu)H\sqrt{1+\abs{\na_{\y}\e}^2}.
\end{eqnarray}
\end{lemma}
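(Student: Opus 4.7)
The plan is to recognize both identities as the flat-coordinate expressions of the standard surface divergence theorem on $\Gamma$, pulled back via the graph parametrization $x \mapsto (x,\e(x))$. Because $\Sigma$ is a periodic flat $2$-torus, the graph $\Gamma$ is compact without boundary, so no boundary terms appear and both claims reduce to the classical identities
$$\int_\Gamma (\na_{\Gamma} F)G\,\ud{S} + \int_\Gamma F(\na_{\Gamma} G)\,\ud{S} = -\int_\Gamma FG\,\nu H\,\ud{S} \quad \text{and} \quad \int_\Gamma \na_{\Gamma}\cdot X\,\ud{S} = -\int_\Gamma (X\cdot\nu)H\,\ud{S},$$
together with the induced area element $\ud{S} = \sqrt{1+\abs{\na_{\y}\e}^2}\,\ud{x}$.

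For a self-contained verification, I would rewrite each surface derivative in the form $\p_{\Gamma,i} = \sum_{j=1}^2 M_{ij}(\e)\p_j$, reading the coefficients $M_{ij}$ off from the explicit definitions given before Lemma \ref{app lemma 1} (only horizontal derivatives $\p_1,\p_2$ appear, since $f$ and $g$ live on $\Sigma$). The heart of the proof is the pointwise identity
$$\sum_{j=1}^2 \p_j\!\left(M_{ij}(\e)\sqrt{1+\abs{\na_{\y}\e}^2}\right) = \nu_i H \sqrt{1+\abs{\na_{\y}\e}^2} \qquad (i=1,2,3).$$
For $i=3$ this is immediate: $M_{3j}\sqrt{1+\abs{\na_{\y}\e}^2} = \p_j\e/\sqrt{1+\abs{\na_{\y}\e}^2}$, whose horizontal divergence equals $H$ by the very definition of the mean-curvature operator, matching $\nu_3 H\sqrt{1+\abs{\na_{\y}\e}^2} = H$. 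For $i=1,2$ one differentiates the rational expressions $M_{ij}\sqrt{1+\abs{\na_{\y}\e}^2}$ directly, repeatedly using the identity $\p_j\sqrt{1+\abs{\na_{\y}\e}^2} = -\nu_{\y}\cdot\na_{\y}\p_j\e$ from Lemma \ref{app lemma 1}; the resulting polynomial in the second derivatives of $\e$ collapses to $\nu_i H\sqrt{1+\abs{\na_{\y}\e}^2}$ after algebraic simplification.

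Granted this pointwise identity, the first statement follows from periodicity. Since $\Sigma$ has no boundary, $\int_\Sigma \p_j(fg M_{ij}\sqrt{1+\abs{\na_{\y}\e}^2}) = 0$ for each $j$. Expanding via the product rule and summing over $j=1,2$ yields
$$0 = \int_\Sigma (\p_{\Gamma,i}f)g\sqrt{1+\abs{\na_{\y}\e}^2} + \int_\Sigma f(\p_{\Gamma,i}g)\sqrt{1+\abs{\na_{\y}\e}^2} + \int_\Sigma fg\,\nu_i H\sqrt{1+\abs{\na_{\y}\e}^2},$$
which rearranges to the first identity. The second identity is then an immediate corollary: writing $\na_{\Gamma}\cdot X = \sum_i \p_{\Gamma,i}X_i$, applying the first identity with $f = X_i$ and $g\equiv 1$ (so that $\p_{\Gamma,i}g = 0$), and summing over $i$ gives
$$\int_\Sigma \na_{\Gamma}\cdot X\sqrt{1+\abs{\na_{\y}\e}^2} = -\sum_i \int_\Sigma X_i\nu_i H\sqrt{1+\abs{\na_{\y}\e}^2} = -\int_\Sigma (X\cdot\nu)H\sqrt{1+\abs{\na_{\y}\e}^2}.$$

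The principal obstacle is verifying the pointwise identity for $i\in\{1,2\}$. The computation is purely algebraic and conceptually transparent—it is the graph-coordinate form of the intrinsic identity $\na_{\Gamma}\cdot\nu = -H$ from Lemma \ref{app lemma 1}—but the direct bookkeeping is moderately tedious. In practice one either grinds it out using the explicit $M_{ij}$ and the derivative identities in Lemma \ref{app lemma 1}, or invokes the conceptual route from the first paragraph to reduce the lemma to a standard fact from the differential geometry of closed hypersurfaces.
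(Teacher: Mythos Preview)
Your argument is correct: writing $\p_{\Gamma,i}=\sum_{j=1}^2 M_{ij}\p_j$, establishing the pointwise divergence identity $\sum_j\p_j(M_{ij}\sqrt{1+|\na_\ast\e|^2})=\nu_i H\sqrt{1+|\na_\ast\e|^2}$, and then integrating by parts over the periodic torus $\Sigma$ is exactly the standard route, and the deduction of the second identity from the first by taking $g\equiv 1$ is clean. The paper itself does not supply a proof of this lemma at all---it simply cites \cite{Kim.Tice2016}, Appendix~A---so there is nothing to compare against here; your computation is precisely the kind of verification that reference contains.
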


\begin{lemma}\label{app lemma 3}
Assume $f\in C^2(\r)$. Suppose $\tilde\c$ and $\e$ satisfies
\begin{eqnarray}
\left\{
\begin{array}{rcl}
\dt\tilde\c+u\cdot\na_{\y}\tilde\c+\tilde\c\na_{\Gamma}\cdot u&=&\gamma\de_{\Gamma}\tilde\c-X,\\
\dt\e-(u\cdot\nu)\sqrt{1+\abs{\na_{\y}\e}^2}&=&0.
\end{array}
\right.
\end{eqnarray}
Then we have
\begin{eqnarray}
\frac{\ud{}}{\ud{t}}\int_{\Sigma}f(\tilde\c)\sqrt{1+\abs{\na_{\y}\e}^2}&=&\int_{\Sigma}\bigg(\Big(f(\tilde\c)-f'(\tilde\c)\tilde\c\Big)\na_{\Gamma}\cdot u-f''(\tilde\c)\abs{\na_{\Gamma}\tilde\c}^2-f'(\tilde\c)X\bigg)\sqrt{1+\abs{\na_{\y}\e}^2}.
\end{eqnarray}
\end{lemma}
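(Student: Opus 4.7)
\textbf{Plan for the proof of Lemma \ref{app lemma 3}.}

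The strategy is a direct computation: differentiate under the integral sign, substitute the evolution equations for $\tilde\c$ and $\e$, and then use the surface-calculus identities of Lemmas \ref{app lemma 1} and \ref{app lemma 2} to reorganize the resulting terms. First I would write
\begin{eqnarray*}
\frac{\ud{}}{\ud{t}}\int_{\Sigma}f(\tilde\c)\sqrt{1+\abs{\na_{\y}\e}^2}
&=&\int_{\Sigma}f'(\tilde\c)\dt\tilde\c\sqrt{1+\abs{\na_{\y}\e}^2}
+\int_{\Sigma}f(\tilde\c)\dt\sqrt{1+\abs{\na_{\y}\e}^2}
\end{eqnarray*}
and substitute the hypothesized equation for $\dt\tilde\c$. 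This immediately produces the terms $-\int_\Sigma f'(\tilde\c)\tilde\c(\na_\Gamma\cdot u)\sqrt{1+\abs{\na_\y\e}^2}$ and $-\int_\Sigma f'(\tilde\c) X\sqrt{1+\abs{\na_\y\e}^2}$ (the latter already matching the target), along with a diffusion term and transport-type terms that must be reorganized.

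For the diffusion contribution I would use the integration-by-parts identity of Lemma \ref{app lemma 2} with the integrand $\gamma f'(\tilde\c)\de_\Gamma\tilde\c$. Writing $\de_\Gamma\tilde\c=\p_{\Gamma,i}\p_{\Gamma,i}\tilde\c$ and integrating by parts once, the bulk contribution is $-\gamma\int_\Sigma f''(\tilde\c)\abs{\na_\Gamma\tilde\c}^2\sqrt{1+\abs{\na_\y\e}^2}$, exactly the required term. The curvature-type remainder is $-\gamma\int_\Sigma f'(\tilde\c)(\na_\Gamma\tilde\c\cdot\nu)H\sqrt{1+\abs{\na_\y\e}^2}$, which vanishes by the last identity in Lemma \ref{app lemma 1} (namely $\na_\Gamma\tilde\c\cdot\nu=0$). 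So the diffusion term is completely accounted for.

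It remains to show
\[
\int_{\Sigma}f(\tilde\c)\dt\sqrt{1+\abs{\na_\y\e}^2}-\int_\Sigma f'(\tilde\c)(u\cdot\na_\y\tilde\c)\sqrt{1+\abs{\na_\y\e}^2}=\int_\Sigma f(\tilde\c)(\na_\Gamma\cdot u)\sqrt{1+\abs{\na_\y\e}^2}.
\]
On the left, I substitute the third identity of Lemma \ref{app lemma 1} for $\dt\sqrt{1+\abs{\na_\y\e}^2}$, integrate by parts on $\Sigma$ (which is periodic, so no boundary terms), and then replace $\dt\e=(u\cdot\nu)\sqrt{1+\abs{\na_\y\e}^2}$ using the kinematic hypothesis. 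On the right I apply Lemma \ref{app lemma 2} with $F=u_i$ and $G=f(\tilde\c)$ and sum in $i$ to obtain $-\int_\Sigma f'(\tilde\c)(u\cdot\na_\Gamma\tilde\c)\sqrt{1+\abs{\na_\y\e}^2}-\int_\Sigma f(\tilde\c)(u\cdot\nu)H\sqrt{1+\abs{\na_\y\e}^2}$. The mean-curvature terms on both sides match directly (after using $\dt\e=(u\cdot\nu)\sqrt{1+\abs{\na_\y\e}^2}$), leaving a purely algebraic pointwise identity to verify, namely
\[
(u\cdot\nu)\na_\y\e\cdot\na_\y\tilde\c+(u_\y\cdot\na_\y\tilde\c)\sqrt{1+\abs{\na_\y\e}^2}=(u\cdot\na_\Gamma\tilde\c)\sqrt{1+\abs{\na_\y\e}^2}.
\]

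The main obstacle is this last pointwise identity: it requires unpacking the explicit formulas for $\p_{\Gamma,i}$ from the introduction and, using $\p_3\tilde\c=0$, rearranging $u\cdot\na_\Gamma\tilde\c=u_1\p_{\Gamma,1}\tilde\c+u_2\p_{\Gamma,2}\tilde\c+u_3\p_{\Gamma,3}\tilde\c$. A direct expansion, collecting the coefficients of $u_1\p_1\tilde\c$, $u_1\p_2\tilde\c$, etc., and using $u\cdot\nu=(u_3-u_\y\cdot\na_\y\e)/\sqrt{1+\abs{\na_\y\e}^2}$, reduces the claim to an elementary identity in the components of $\na_\y\e$ and $\na_\y\tilde\c$; once this is verified, the whole chain of equalities closes and the lemma follows.
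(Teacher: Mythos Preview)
Your proof is correct: the decomposition into the diffusion term, the $-f'(\tilde\c)\tilde\c\,\na_\Gamma\cdot u$ and $-f'(\tilde\c)X$ terms, and the transport/geometric remainder is the natural one, and the final pointwise identity you isolate does indeed check out by direct expansion of the operators $\p_{\Gamma,i}$.  Note that the paper does not actually supply its own proof of this lemma --- it simply cites \cite[Appendix~A]{Kim.Tice2016} --- so there is no in-paper argument to compare against; your direct computation via Lemmas~\ref{app lemma 1} and~\ref{app lemma 2} is exactly the standard route.
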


\makeatletter
\renewcommand \theequation {%
B.%
\@arabic\c@equation} \@addtoreset{equation}{section}
\@addtoreset{equation}{section} \makeatother

\section{Analytic Tools}

Define Poisson integral from $\Sigma$ to $\Sigma\times(-\infty,0]$ as
\begin{eqnarray}\label{Poisson}
\pp[f](x)=\sum_{n\in(L_1^{-1}\mathbb{Z})\times(L_2^{-1}\mathbb{Z})}\ue^{2\pi\ui n x_{\y}}\ue^{2\pi\abs{n}x_3}\hat f(n),
\end{eqnarray}
where
\begin{eqnarray}
\hat f(n)=\int_{\Sigma}f(x_{\y})\frac{\ue^{2\pi\ui nx_{\y}}}{L_1L_2}\ud{x_{\y}}.
\end{eqnarray}
$\pp$ is a bounded linear operator from $H^s(\Sigma)$ to $H^{s+\frac{1}{2}}(\Sigma\times(-\infty,0])$ for $s>0$.

The proof of following lemmas can be found in \cite[Appendix B]{Kim.Tice2016}.
\begin{lemma}
We have for $q\geq1$,
\begin{eqnarray}
\tm{\na^q\pp[f]}{0}\ls\nm{f}_{\dot{H}^{q-\frac{1}{2}}(\Sigma)}.
\end{eqnarray}
\end{lemma}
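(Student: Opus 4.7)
The plan is to prove this bound by a direct Fourier-side computation, using the explicit series representation of $\pp[f]$ given in \eqref{Poisson} together with Plancherel's theorem on $\Sigma$ and the exponential decay in $x_3$.

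First, I would fix a multi-index $\alpha = (\alpha_1,\alpha_2,\alpha_3)$ with $|\alpha| = q \ge 1$ and differentiate the series \eqref{Poisson} term-by-term. Since $\partial_i(e^{2\pi i n \cdot x_\ast}) = 2\pi i n_i\, e^{2\pi i n \cdot x_\ast}$ for $i=1,2$ and $\partial_3(e^{2\pi |n| x_3}) = 2\pi |n|\, e^{2\pi |n| x_3}$, one obtains
\begin{equation*}
\partial^\alpha \pp[f](x_\ast,x_3) = \sum_{n \ne 0} (2\pi i n_1)^{\alpha_1}(2\pi i n_2)^{\alpha_2}(2\pi |n|)^{\alpha_3}\, e^{2\pi i n \cdot x_\ast}\, e^{2\pi |n| x_3}\, \hat f(n),
\end{equation*}
where the $n=0$ mode drops since $q\ge 1$. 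For each fixed $x_3 \le 0$ Plancherel's identity on $\Sigma$ gives
\begin{equation*}
\int_\Sigma |\partial^\alpha \pp[f](x_\ast,x_3)|^2\, dx_\ast = L_1 L_2 \sum_{n \ne 0} (2\pi)^{2q} |n_1|^{2\alpha_1} |n_2|^{2\alpha_2} |n|^{2\alpha_3}\, e^{4\pi |n| x_3}\, |\hat f(n)|^2,
\end{equation*}
and the elementary bound $|n_1|^{2\alpha_1} |n_2|^{2\alpha_2}|n|^{2\alpha_3} \le |n|^{2q}$ reduces the integrand to the manageable weight $|n|^{2q}\, e^{4\pi |n| x_3}$.

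Next, since the domain $\Omega = \Sigma \times (-L_3,0) \subset \Sigma \times (-\infty,0]$, I can enlarge the $x_3$-integral and compute explicitly
\begin{equation*}
\int_{-\infty}^0 e^{4\pi |n| x_3}\, dx_3 = \frac{1}{4\pi |n|} \quad \text{for } n \ne 0.
\end{equation*}
Combining the two bounds and summing over multi-indices $|\alpha|=q$ yields, up to a universal constant,
\begin{equation*}
\|\nabla^q \pp[f]\|_{H^0(\Omega)}^2 \ls \sum_{n \ne 0} |n|^{2q} \cdot \frac{1}{|n|} |\hat f(n)|^2 = \sum_{n \ne 0} |n|^{2q-1} |\hat f(n)|^2 \ls \|f\|_{\dot H^{q-1/2}(\Sigma)}^2,
\end{equation*}
which is precisely the desired estimate, recognizing the right-hand side as (a constant multiple of) the square of the homogeneous Sobolev norm of order $q-\tfrac12$ on $\Sigma$.

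There is no substantive obstacle here: the argument is a textbook Fourier-multiplier estimate, and the only small subtleties are bookkeeping with the lattice normalization $(L_1^{-1}\mathbb{Z}) \times (L_2^{-1}\mathbb{Z})$ and the harmless factor $L_1 L_2$ coming from Plancherel's identity on a torus of volume $L_1 L_2$, together with ensuring that the zero mode is absent (which is automatic since $q \ge 1$). Enlarging the $x_3$-integration to the half-line is what allows the simple closed-form evaluation and shows that the bound is independent of the depth $L_3$.
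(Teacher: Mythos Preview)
Your argument is correct: the direct Fourier-side computation using Plancherel on $\Sigma$ and the explicit $x_3$-integration of $e^{4\pi|n|x_3}$ is exactly the standard route to this estimate, and your handling of the zero mode and the lattice normalization is fine. The paper does not actually supply its own proof of this lemma but simply refers to \cite[Appendix B]{Kim.Tice2016}, where the same computation is carried out; your write-up is essentially that argument.
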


\begin{lemma}
We have for $q\geq1$ and $s>1$,
\begin{eqnarray}
\nm{\na^q\pp[f]}_{L^{\infty}(\Omega)}\ls\nm{f}_{\dot{H}^{q+s}(\Sigma)}.
\end{eqnarray}
The same estimate holds for $q=0$ if $\ds\int_{\Sigma}f=0$.
\end{lemma}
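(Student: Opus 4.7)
The plan is to work directly with the Fourier series that defines $\pp[f]$ and control the $L^\infty$ norm by the $\ell^1$ norm of the Fourier coefficients of $\na^q \pp[f]$, then recover the $\dot{H}^{q+s}$ norm via a weighted Cauchy--Schwarz splitting. Since $\pp[f]$ is smooth in the interior of $\Sigma\times(-\infty,0]$ and decays at $x_3\to -\infty$, all formal manipulations below are justified by uniform absolute convergence once the estimate is in place.

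First I would differentiate the defining series termwise. Each partial derivative $\p_{x_i}$ ($i=1,2$) applied to a mode $n$ brings down a factor $2\pi\ui n_i$, while $\p_{x_3}$ brings down $2\pi\abs{n}$. Hence for any multi-index of order $q$, the resulting Fourier symbol has magnitude bounded by $C_q \abs{n}^q$, so pointwise
\begin{equation*}
\abs{\na^q\pp[f](x)} \leq C_q \sum_{n} \abs{n}^q \ue^{2\pi\abs{n}x_3} \abs{\hat f(n)}.
\end{equation*}
Since $x_3\in[-L_3,0]$ in $\Omega$, we have $\ue^{2\pi\abs{n}x_3}\leq 1$, giving a uniform (in $x$) bound of the right-hand side by $C_q\sum_n \abs{n}^q \abs{\hat f(n)}$. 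For $q\geq 1$ the symbol vanishes at $n=0$, so the summation can be restricted to $n\neq 0$; for $q=0$ the hypothesis $\int_\Sigma f=0$ forces $\hat f(0)=0$ and again we may restrict to $n\neq 0$.

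Next I would split the weight as $\abs{n}^q = \abs{n}^{-s}\cdot\abs{n}^{q+s}$ and apply Cauchy--Schwarz on $\ell^2((L_1^{-1}\mathbb{Z})\times(L_2^{-1}\mathbb{Z})\setminus\{0\})$:
\begin{equation*}
\sum_{n\neq 0} \abs{n}^q \abs{\hat f(n)} \leq \Bigl(\sum_{n\neq 0}\abs{n}^{-2s}\Bigr)^{1/2}\Bigl(\sum_{n\neq 0}\abs{n}^{2(q+s)}\abs{\hat f(n)}^2\Bigr)^{1/2}.
\end{equation*}
The second factor is, up to universal constants depending on $L_1,L_2$, precisely $\nm{f}_{\dot H^{q+s}(\Sigma)}$ by the Parseval characterization of homogeneous Sobolev norms on the torus. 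The first factor is a constant depending only on $s$, $L_1$, $L_2$.

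The main (only) delicate point is the convergence of $\sum_{n\neq 0}\abs{n}^{-2s}$: since the lattice is two-dimensional, this sum behaves like $\int_1^\infty r^{-2s}\, r\,\ud r$, which converges precisely when $2s-1>1$, i.e.\ $s>1$. This is where the hypothesis $s>1$ is used, and the argument would fail at the endpoint $s=1$. Combining the two estimates yields $\nm{\na^q\pp[f]}_{L^\infty(\Omega)}\ls \nm{f}_{\dot H^{q+s}(\Sigma)}$, with the mean-zero hypothesis handling the $q=0$ case exactly as above.
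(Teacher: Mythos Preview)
Your argument is correct and is exactly the standard one: termwise differentiation of the defining series, the trivial bound $e^{2\pi|n|x_3}\le 1$ for $x_3\le 0$, and Cauchy--Schwarz against the weight $|n|^{-s}$, whose $\ell^2$ summability over the two-dimensional lattice is precisely the condition $s>1$. The paper does not give its own proof of this lemma but refers to \cite[Appendix B]{Kim.Tice2016}, where the same computation is carried out; your proof is essentially that one.
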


\begin{lemma}
Let $U$ denote either $\Sigma$ or $\Omega$.
\begin{enumerate}
\item
Let $0\leq r\leq s_1\leq s_2$ be such that $s_1>n/2$. Let $f\in
H^{s_1}(U)$, $g\in H^{s_2}(U)$. Then $fg\in H^r(U)$ and
\begin{eqnarray}\label{ap 01}
\nm{fg}_{H^r(U)}\ls \nm{f}_{H^{s_1}(U)}\nm{g}_{H^{s_2}(U)}.
\end{eqnarray}
\item
Let $0\leq r\leq s_1\leq s_2$ be such that $s_2>r+n/2$. Let $f\in
H^{s_1}(U)$, $g\in H^{s_2}(U)$. Then $fg\in H^r(U)$ and
\begin{eqnarray}\label{ap 02}
\nm{fg}_{H^r(U)}\ls \nm{f}_{H^{s_1}(U)}\nm{g}_{H^{s_2}(U)}.
\end{eqnarray}
\item
Let $0\leq r\leq s_1\leq s_2$ be such that $s_2>r+n/2$. Let
$f\in H^{-s}(U)$, $g\in H^{s_2}(U)$. Then $fg\in H^{-s_1}(U)$ and
\begin{eqnarray}
\nm{fg}_{H^{-s_1}(U)}\ls \nm{f}_{H^{-r}(U)}\nm{g}_{H^{s_2}(U)}.
\end{eqnarray}
\end{enumerate}
\end{lemma}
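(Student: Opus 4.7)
My plan is to establish the three bilinear Sobolev estimates in sequence: part (1) from the Banach algebra property of $H^{s_1}$, part (2) by a Bony paraproduct decomposition, and part (3) from part (2) by a duality argument. Write $n$ for the spatial dimension of $U\in\{\Sigma,\Omega\}$ (so $n=2$ or $3$). To use Fourier-analytic tools, I first extend functions from $U$ to $\mathbb{R}^n$ via a bounded Sobolev extension operator (trivial for the torus $\Sigma$; a standard reflection/Stein extension for the slab $\Omega$), so it suffices to prove each estimate on $\mathbb{R}^n$ and then restrict.

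For part (1), the hypothesis $s_1>n/2$ gives $H^{s_1}(\mathbb{R}^n)\hookrightarrow L^\infty$ and makes $H^{s_1}$ a Banach algebra, so $\nm{fg}_{H^{s_1}}\ls\nm{f}_{H^{s_1}}\nm{g}_{H^{s_1}}$ (proved by Littlewood--Paley/Moser for integer $s_1$ and interpolation). Combining with the embeddings $H^{s_2}\hookrightarrow H^{s_1}$ (from $s_2\ge s_1$) and $H^{s_1}\hookrightarrow H^r$ (from $r\le s_1$) yields the stated inequality.

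For part (2), I decompose $fg=T_fg+T_gf+R(f,g)$ via Bony paraproducts with respect to a Littlewood--Paley partition $\{\Delta_j\}$. For the off-diagonal piece $T_gf=\sum_j S_{j-3}g\cdot\Delta_jf$, the hypothesis $s_2>n/2$ gives $g\in L^\infty$ via Bernstein, hence $\nm{T_gf}_{H^{s_1}}\ls\nm{g}_{L^\infty}\nm{f}_{H^{s_1}}\ls\nm{g}_{H^{s_2}}\nm{f}_{H^{s_1}}$, which embeds into $H^r$ since $r\le s_1$. For $T_fg=\sum_j S_{j-3}f\cdot\Delta_jg$ I apply Bernstein to the low-frequency truncations of $f$, obtaining $\nm{T_fg}_{H^r}\ls\nm{f}_{H^{s_1}}\nm{g}_{H^{r+n/2-s_1}}\le\nm{f}_{H^{s_1}}\nm{g}_{H^{s_2}}$ whenever $s_2\ge r+n/2-s_1$, which holds because $s_2>r+n/2$ and $s_1\ge 0$. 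Finally the resonant remainder $R(f,g)=\sum_{|j-k|\le 2}\Delta_jf\,\Delta_kg$ is handled by Cauchy--Schwarz on the dyadic shells; the condition $s_1+s_2>r+n/2$ (again implied by $s_2>r+n/2$ and $s_1\ge 0$) supplies exactly the summability needed to close $\nm{R(f,g)}_{H^r}\ls\nm{f}_{H^{s_1}}\nm{g}_{H^{s_2}}$.

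For part (3), fix $\phi\in H^{s_1}(U)$; applying part (2) to the pair $(\phi,g)$ (the hypotheses $0\le r\le s_1\le s_2$ and $s_2>r+n/2$ are precisely those available) yields $\nm{g\phi}_{H^r(U)}\ls\nm{g}_{H^{s_2}(U)}\nm{\phi}_{H^{s_1}(U)}$. Using the $H^{-r}$--$H^r$ duality pairing,
\begin{equation}
|\langle fg,\phi\rangle|=|\langle f,g\phi\rangle|\le\nm{f}_{H^{-r}(U)}\nm{g\phi}_{H^r(U)}\ls\nm{f}_{H^{-r}(U)}\nm{g}_{H^{s_2}(U)}\nm{\phi}_{H^{s_1}(U)},
\end{equation}
where a smooth approximation of $f$ legitimizes the pairing. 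Taking the supremum over $\nm{\phi}_{H^{s_1}(U)}\le 1$ and invoking $\nm{fg}_{H^{-s_1}(U)}=\sup_\phi|\langle fg,\phi\rangle|$ completes the proof. The main obstacle is the resonant paraproduct estimate in (2): the two off-diagonal paraproducts require only $s_2>n/2$, whereas the high--high diagonal sum demands the sharper threshold $s_2>r+n/2$, and this is precisely the condition built into the hypothesis; all remaining steps are routine given standard Littlewood--Paley calculus and the existence of the universal Sobolev extension operator.
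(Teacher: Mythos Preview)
Your proof is correct and follows the standard route (algebra property for part (1), Bony paraproduct for part (2), duality for part (3)). Note that the paper does not supply its own proof of this lemma: it simply records it as a known analytic tool and cites Appendix B of \cite{Kim.Tice2016}, so there is no in-paper argument to compare against.
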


In the comparison estimate, we need the following classical elliptic estimate.
\begin{lemma}\label{elliptic}
Assume $\phi\in H^{r-2}(\Omega)$ and $\psi\in H^{r-3/2}(\Sigma)$ for $r\geq1$. Then solutions to the elliptic problem
\begin{eqnarray}
\left\{
\begin{array}{ll}
-\de\b=\phi&\ \ \text{in}\ \ \Omega,\\
\p_3\b=\psi&\ \ \text{on}\ \ \Sigma,\\
\p_3\b=0&\ \ \text{on}\ \ \Sigma_b,
\end{array}
\right.
\end{eqnarray}
satisfy the estimate
\begin{eqnarray}
\tm{b-\dfrac{1}{\abs{\Omega}_3}\ds\int_{\Omega}b}{r}\ls \tm{\phi}{r-2}+\tms{\psi}{r-\frac{3}{2}}.
\end{eqnarray}
\end{lemma}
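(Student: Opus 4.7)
The plan is to handle this as a standard Neumann-Laplace elliptic regularity estimate, exploiting the simple geometry of $\Omega$ (a horizontally-periodic slab with two flat boundaries $\Sigma$ and $\Sigma_b$). First I would normalize by the mean-zero requirement $\int_\Omega b = 0$, which eliminates the kernel of the Neumann problem. I treat the base case $r=1$ by a variational argument and then promote to $r \ge 2$ by either a Fourier decomposition in the horizontal periodic directions or by horizontal difference quotients combined with an algebraic recovery of the vertical derivatives from the equation.

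\textbf{Base case $r=1$.} On the closed subspace $V = \{v\in H^1(\Omega): \int_\Omega v =0\}$ the Poincar\'e–Wirtinger inequality makes the Dirichlet form $a(u,v)=\int_\Omega \na u\cdot \na v$ coercive and continuous. For $\phi\in H^{-1}(\Omega)$ and $\psi\in H^{-1/2}(\Sigma)$, the functional
\begin{equation}
F(v) = \langle \phi,v\rangle_{H^{-1},H^1(\Omega)} + \langle \psi, v|_{\Sigma}\rangle_{H^{-1/2},H^{1/2}(\Sigma)}
\end{equation}
is bounded on $V$ by $(\tm{\phi}{-1}+\tms{\psi}{-1/2})\tm{v}{1}$ using the trace inequality. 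The Lax–Milgram theorem produces a unique $b\in V$ satisfying the weak form of the Neumann problem, together with the estimate $\tm{b}{1}\ls \tm{\phi}{-1}+\tms{\psi}{-1/2}$. (Well-posedness up to a constant requires the compatibility $\int_\Omega\phi + \int_\Sigma\psi = 0$; in the lemma this is implicitly enforced by interpreting $b$ modulo constants through the mean-zero projection.)

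\textbf{Promotion to $r\ge 2$.} The cleanest path is Fourier in the horizontal variable. Expanding $b = \sum_n \hat b_n(x_3)\ue^{2\pi\ui n\cdot x_\y}$ and similarly for $\phi$ and $\psi$, the problem decouples into a family of ODEs on $(-\l,0)$:
\begin{equation}
 -\hat b_n''(x_3) + 4\pi^2 \abs{n}^2 \hat b_n(x_3) = \hat\phi_n(x_3), \quad \hat b_n'(0)=\hat\psi_n,\quad \hat b_n'(-\l)=0.
\end{equation}
For $n\ne 0$ this is a coercive two-point boundary value problem, solvable explicitly by variation of parameters with hyperbolic sines/cosines, yielding uniform-in-$n$ ODE regularity: differentiating in $x_3$ and using the equation to trade two vertical derivatives for one horizontal factor of $\abs{n}^2$ and one power of $\phi$, one obtains
\begin{equation}
\sum_{j=0}^r \int_{-\l}^0 (1+\abs{n}^2)^{r-j}\abs{\partial_3^j \hat b_n}^2 \ls \sum_{j=0}^{r-2} \int_{-\l}^0 (1+\abs{n}^2)^{r-2-j}\abs{\partial_3^j \hat\phi_n}^2 + (1+\abs{n}^2)^{r-3/2}\abs{\hat\psi_n}^2.
\end{equation}
The zero mode is controlled by the base-case argument plus the normalization $\int_\Omega b=0$. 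Summing in $n$ and invoking Parseval delivers $\tm{b}{r}\ls\tm{\phi}{r-2}+\tms{\psi}{r-3/2}$ on the mean-zero subspace.

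\textbf{Obstacle.} The genuinely delicate point is the $r=1$ case, where $\psi\in H^{-1/2}$ is a distribution and the boundary integral must be interpreted through the duality pairing with the trace; once Lax–Milgram has been applied cleanly on the mean-zero subspace $V$, the higher-regularity estimates are essentially mechanical thanks to the product structure of $\Omega$ and the periodicity of $\Sigma$. An equivalent alternative to the Fourier route is to apply horizontal difference quotients $D_i^h$ for $i=1,2$ to the equation — these commute with both boundary conditions since the boundaries are flat and horizontal — to inductively bound $\tm{\p_i b}{r-1}$, and then recover $\tm{\p_3^2 b}{r-2}$ directly from $\p_3^2 b = -\phi - \p_1^2 b - \p_2^2 b$.
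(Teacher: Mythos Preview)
Your proposal is correct and gives a complete standard argument for this Neumann--Laplace regularity estimate on the periodic slab. The paper itself does not prove this lemma at all: it simply introduces it with the phrase ``we need the following classical elliptic estimate'' and states the result, relying on its well-known nature (the other appendix lemmas are referred to \cite{Kim.Tice2016}). So there is nothing to compare against; your Lax--Milgram base case plus horizontal Fourier decomposition (or, equivalently, horizontal difference quotients plus recovery of $\partial_3^2 b$ from the equation) is exactly the sort of proof one would supply if asked to fill in the details.
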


\end{document}